\setlist[enumerate]{label=\rm{(\arabic*)}}
\setlist[enumerate,2]{label=\rm({\it\roman*})}
\setlist[itemize]{label=\raisebox{0.25ex}{\tiny$\bullet$}}
\theoremstyle{plain}
\newtheorem{theorem}{Theorem}[section]
\newtheorem{lemma}[theorem]{Lemma}
\newtheorem{proposition}[theorem]{Proposition}
\newtheorem{corollary}[theorem]{Corollary}
\newtheorem{example}[theorem]{Example}
\newtheorem{question}[theorem]{Question}
\newtheorem*{maintheorem}{Theorem}
\newtheorem*{mainProp}{Proposition}
\theoremstyle{definition}
\newtheorem{definition}[theorem]{Definition}
\newtheorem{notation}[theorem]{Notation}
\newtheorem{remark}[theorem]{Remark}
\newtheorem*{acknowledgement}{Acknowledgement}
\theoremstyle{remark}
\newcommand{\incl}[1][r]{\ar@<-0.2pc>@{^(-}[#1] \ar@<+0.2pc>@{-}[#1]}
\renewcommand{\P}{\mathbb{P}}
\newcommand{\FF}{\mathcal{F}}
\DeclareMathOperator{\bir}{bir}
\DeclareMathOperator{\lin}{line}
\DeclareMathOperator{\Amp}{Amp}
\DeclareMathOperator{\Nef}{Nef}
\DeclareMathOperator{\Eff}{Eff}
\DeclareMathOperator{\pf}{pf}
\newcommand{\Spec}{\mathrm{Spec}}
\renewcommand{\SS}{\mathcal{S}}
\newcommand{\p}{\mathbb{P}}
\newcommand{\pr}{\mathrm{pr}}
\newcommand{\Q}{\mathcal{Q}}
\newcommand{\C}{\mathbb{C}}
\newcommand{\M}{\overline{M}}
\newcommand{\MmXbeta}{\overline{M}_{0,m}(X,\beta)}
\newcommand{\MzeroXbeta}{\overline{M}_{0,0}(X,\beta)}
\newcommand{\MtwoXbeta}{\overline{M}_{0,2}(X,\beta)}
\newcommand{\MmXd}{\overline{M}_{0,m}(X,d)}
\newcommand{\MoneXd}{\overline{M}_{0,1}(X,d)}
\newcommand{\MtwoXd}{\overline{M}_{0,2}(X,d)}
\newcommand{\MbirzeroXbeta}{\overline{M}^{\bir}_{0,0}(X,\beta)}
\newcommand{\HH}{\mathcal{H}}
\newcommand{\U}{\mathcal{U}}
\newcommand{\Z}{\mathbb{Z}}
\newcommand{\G}{\mathbb{G}}
\renewcommand{\O}{\mathcal{O}}
\newcommand{\Gr}{\mathrm{Gr}}
\DeclareMathOperator{\SL}{SL}
\DeclareMathOperator{\Aut}{Aut}
\DeclareMathOperator{\rk}{rk}
\DeclareMathOperator{\Hilb}{Hilb}
\DeclareMathOperator{\Pic}{Pic}
\DeclareMathOperator{\Ker}{Ker}
\DeclareMathOperator{\sgn}{sgn}
\DeclareMathOperator{\ev}{ev}
\DeclareMathOperator{\sm}{sm}
\DeclareMathOperator{\Mor}{Mor}
\DeclareMathOperator{\odd}{odd}
\DeclareMathOperator{\even}{even}
\DeclareMathOperator{\Sym}{Sym}
\DeclareMathOperator{\Spin}{Spin}
\DeclareMathOperator{\Sp}{Sp}
\title[Rational curves on $V_5$]{Rational curves on $V_5$ and rational simple connectedness}
\date{\today}
\author[Andrea Fanelli]{Andrea Fanelli}
\address{Laboratoire de Math\'ematiques de Versailles, UVSQ, CNRS, Universit\'e Paris-Saclay, 78035 Versailles, France.}
\email{andrea.fanelli@uvsq.fr}
\author[Laurent Gruson]{Laurent Gruson}
\address{Laboratoire de Math\'ematiques de Versailles, UVSQ, CNRS, Universit\'e Paris-Saclay, 78035 Versailles, France.}
\email{laurent.gruson@uvsq.fr}
\author{Nicolas Perrin}
\address{Laboratoire de Math\'ematiques de Versailles, UVSQ, CNRS, Universit\'e Paris-Saclay, 78035 Versailles, France.}
\email{nicolas.perrin@uvsq.fr}
\subjclass[2010]{14J45, 14H10, 14E05, 14E08}
\begin{document}

\begin{abstract}
In this paper the notion of rational simple connectedness for the quintic Fano threefold $V_5\subset \mathbb{P}^6$ is studied and unirationality of the moduli spaces $\overline{M}_{0,0}^{\text{bir}}(V_5,d)$, with $d \ge 1$ is proved.
Many further unirationality results for special moduli spaces of rational curves on quadric hypersurfaces and del Pezzo surfaces are obtained via explicit birational methods. 
\end{abstract}

\maketitle
\tableofcontents


\section*{Introduction}\label{Sec:Intro}

Given a smooth polarised rationally connected variety $X$ over $\C$, we are interested in studying the Kontsevich moduli spaces $\M_{0,m}(X, d)$ of rational curves on $X$ with sufficiently large degree $d$.
Our main motivation comes from the series of works by de Jong and Starr (cf.~\cite{dJS_2006a}, \cite{dJS_2006b}, \cite{dJS_2006c}, \cite{dJS_2007}, etc.). Starting form the seminal work \cite{GHS_2003}, whose main result guarantees that a rationally connected fibration over a smooth curve has a section, de Jong and Starr explore numerical and geometric conditions on the general fibre of a rationally connected fibration over a surface which guarantee the existence of a rational section.

This deep analysis originated the new notion of \emph{rational simple connectedness} and was applied to the case of homogeneous varieties to prove \emph{Serre's Conjecture II} (cf.~\cite{dJHS_2011}).
This notion has several variations, but heuristically is the algebraic analogue of simple connectedness in topology: a variety $X$ is rationally simply connected if for $d$ sufficiently large, the evaluation map
$$\ev_2\colon \M_{0,2}(X,d)\to X^2$$
is dominant and the general fibre is rationally connected (cf.~Definition~\ref{Def:RSC}). 
\\The reason why one requires this property for large enough $d$ is that the behaviour of moduli spaces of rational curves in low degree can be atypical or, more simply, these spaces can be empty.

In \cite{dJS_2006c}, the authors also introduce a strong version of rational simple connectedness, which requires the existence of a ``very free'' ruled surface in $X$ (a so called \emph{very twisting scroll}). In this paper we start from the easier notion introduced in \cite[Section~1]{dJS_2006c} for varieties with Picard-rank one and propose a general definition for arbitrary rationally connected varieties (cf.~Definition~\ref{Def:RSC} of \emph{canonically rationally simply connected variety}).

Rational simple connectedness is very subtle and very few examples are known; the picture has been clarified for complete intersections in projective spaces (\cite{dJS_2006c}, \cite{D_2015}), homogeneous spaces (\cite{dJHS_2011}, \cite{BCMP_2013}), and hyperplane sections of Grassmannians (\cite{F_2010}). The main result of this paper is the following.

\begin{maintheorem}
{\rm (= Theorem~\ref{Main_V_5})}
The Fano threefold $V_5\subset \P^6$, obtained as linear section of $\Gr(2,5)\subset \P^9$, is rationally simply connected.
\end{maintheorem}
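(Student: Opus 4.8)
The plan is to reduce rational simple connectedness of $V_5$, in the sense of Definition~\ref{Def:RSC}, to two unirationality statements about Kontsevich spaces, and to prove these by explicit birational geometry, the main geometric input being that a general hyperplane section $S=V_5\cap H$ is a quintic del Pezzo surface (indeed $-K_S=(-K_{V_5}-H)|_S=H|_S$ and $(H|_S)^2=H^3=5$, so $S\cong\mathrm{Bl}_4\P^2$). First I record the numerology: since $-K_{V_5}=2H$ with $H^3=5$, one has $\dim\overline M_{0,0}(V_5,d)=2d$ and $\dim\overline M_{0,2}(V_5,d)=2d+2$, so a general fibre of $\ev_2\colon\overline M_{0,2}(V_5,d)\to V_5\times V_5$ has dimension $2d-4$. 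Because $V_5$ is covered by conics, breaking a free curve of large degree into chains shows $\ev_2$ is dominant once $d\gg0$; hence, by Definition~\ref{Def:RSC}, it suffices to exhibit a distinguished irreducible component $\mathcal M_d\subset\overline M_{0,2}(V_5,d)$ — the closure of the locus of irreducible free curves birational onto their image — whose general fibre over $V_5\times V_5$ is rationally connected. Since unirational varieties are rationally connected, the content is: (i) $\overline M^{\bir}_{0,0}(V_5,d)$ is irreducible of dimension $2d$ and unirational for every $d\ge1$ (this pins down $\mathcal M_d$, since over the locus of maps with irreducible domain the forgetful map $\overline M_{0,2}(V_5,d)\to\overline M_{0,0}(V_5,d)$ has irreducible fibres); and (ii) for $d\gg0$ the general fibre of $\ev_2|_{\mathcal M_d}$ is irreducible of dimension $2d-4$ and unirational.

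For (i) I would argue by induction. A rational curve of degree $e\le5$ spans at most a $\P^5\subsetneq\P^6$, hence lies in some quintic del Pezzo section $S$; by the study of del Pezzo surfaces carried out in the earlier sections, the space of rational curves of a fixed class on $S$ is unirational — via the contraction $S\to\P^2$ it is birational to a space of rational plane curves with prescribed multiplicities at four points, which is parametrised explicitly. Organising these curves over the parameter space of hyperplanes that contain them yields unirationality of $\overline M^{\bir}_{0,0}(V_5,e)$ for $1\le e\le5$ (recovering the $\P^2$ of lines and the $\P^4$ of conics). For $d\ge6$ I would realise a general degree-$d$ curve as a smoothing of a comb $C'\cup Q$, where $C'$ is a free curve of degree $d-2$ (a unirational family, by induction) and $Q$ is a conic on $V_5$ meeting $C'$; the family of such combs fibres over the universal curve over $\overline M^{\bir}_{0,0}(V_5,d-2)$ with fibre the (unirational) family of conics through a point, so it is unirational, and the comb is unobstructed — $H^1$ of its normal bundle vanishes, since $Q$ is a general free conic so that its normal bundle twisted by the node is globally generated — whence it smooths, within the locus of free curves birational onto their image, to a degree-$d$ curve. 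The universal smoothing dominates $\overline M^{\bir}_{0,0}(V_5,d)$, which, being irreducible of the expected dimension $2d$, is therefore unirational. (An alternative, parallel to the treatment of quadric threefolds, would run the induction inside $\Gr(2,5)$ itself, presenting the space of degree-$d$ maps $\P^1\to\Gr(2,5)$ landing in the linear section $\P^6$ as an explicit tower of linear and quadric bundles over a rational base.)

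For (ii), fix general $p,q\in V_5$. Granting that for $d\gg0$ the fibre $\mathcal F_{p,q}$ of $\ev_2|_{\mathcal M_d}$ is irreducible of dimension $2d-4$ — this follows from the irreducibility in (i) and the usual smoothing and bend-and-break arguments, a free degree-$d$ curve $C$ through $p$ and $q$ having $H^1\big(N_{C/V_5}(-p-q)\big)=0$ — I would dominate $\mathcal F_{p,q}$ by a unirational space of combs through $p$ and $q$: a free curve $C_p$ of degree $d_1\ge2$ through $p$ lying on a quintic del Pezzo section through $p$ (a unirational family, by (i) with one imposed incidence, parametrised in part by the $\P^5\subset(\P^6)^\vee$ of hyperplanes through $p$), symmetrically a curve $C_q$ through $q$, and a connecting curve $C_0$ joining a chosen point of $C_p$ to a chosen point of $C_q$ on a quintic del Pezzo section through those two points (again unirational). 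For $d=d_1+\deg C_0+d_2$ with the degrees suitably chosen, the chain $C_p\cup C_0\cup C_q$ smooths, within $\mathcal M_d$, to a degree-$d$ curve through $p$ and $q$; the comb space is unirational and, comparing its dimension with that of $\mathcal F_{p,q}$, the smoothing map is dominant. Hence $\mathcal F_{p,q}$ is unirational, in particular rationally connected, and Definition~\ref{Def:RSC} is verified.

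The step I expect to be the main obstacle is the one invoked repeatedly above: that $\overline M^{\bir}_{0,0}(V_5,d)$, and the fibres $\mathcal F_{p,q}$ for $d\gg0$, are irreducible of the expected dimension. This is exactly what turns ``the smoothing of a unirational family of combs dominates some component'' into unirationality of the entire space, and for the non-homogeneous threefold $V_5$ it is not formal: it requires a separate deformation-theoretic study controlling the boundary strata and ruling out excess components (or the explicit $\Gr(2,5)$-model). A second, substantial difficulty is the explicit birational description — with all the incidence conditions, and controlling the monodromy of the Picard lattices of the fibres — of the auxiliary moduli of rational curves on quintic del Pezzo surfaces and on quadric hypersurfaces, on which the constructions in (i) and (ii) entirely rest.
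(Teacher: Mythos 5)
Your overall reduction (irreducibility and unirationality of $\overline{M}^{\bir}_{0,0}(V_5,d)$, plus unirationality of the general fibre of $\ev_2$) matches the paper's, but your method does not, and it contains a genuine gap at its central step. The inference ``a unirational family of unobstructed combs smooths inside $\overline{M}^{\bir}_{0,0}(V_5,d)$, hence the universal smoothing dominates it, hence it is unirational'' is not valid. Unobstructedness of a comb $C'\cup Q$ shows that $[C'\cup Q]$ is a smooth point of the moduli space lying in the closure of the locus of irreducible curves; varying the comb over a unirational base produces a unirational \emph{boundary} stratum of dimension $2d-1$ inside a component of dimension $2d$. There is no ``universal smoothing'' defined over a unirational base that dominates the component: a smooth variety containing a unirational divisor need not be unirational (a surface of general type containing a rational curve already shows this). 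The same objection applies verbatim to your treatment of the fibres $\mathcal{F}_{p,q}$ in step (ii), where moreover the comb locus, being contained in the boundary of $\mathcal{F}_{p,q}$, cannot dominate it for dimension reasons. Comb smoothing is the right tool for non-emptiness and for irreducibility-type statements, but unirationality requires an actual dominant rational map from a unirational variety, which your argument never constructs. In addition, the irreducibility of $\overline{M}^{\bir}_{0,0}(V_5,d)$ and of the fibres, which you correctly identify as the crux, is left entirely unproved.

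This is precisely what the paper supplies, by a route you only gesture at in your parenthetical about quadrics. One projects from a line $l\subset V_5$ disjoint from the curve (resp.\ through one marked point but not the other): $\phi_l\colon V_5\dasharrow \Q_3$ is birational, and a general degree-$d$ curve goes to a degree-$d$ (resp.\ degree $d-1$) curve on $\Q_3$ meeting the twisted cubic $\gamma_l=\phi_l(D_l)$ in $d$ (resp.\ $d-2$) reduced points (Lemmas~\ref{Lemma:ProjLine} and~\ref{Lemma:V5Lines}). This identifies $\overline{M}^{\bir}_{0,0}(V_5,d)$ birationally with a quotient of $\ev_d^{-1}(\gamma_l^d)\subset \overline{M}^{\bir}_{0,d}(\Q_3,d)$, and the fibres of $\ev_2$ with the analogous loci $\ev_d^{-1}(\gamma_l^{d-2}\times\ell\times\{\phi_l(x_2)\})$. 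The explicit description of $\Mor^{{\bf t}\to{\bf x}}_d(\P^1,\Q_n)$ as the kernel of a rescaled skew-symmetric matrix (Lemma~\ref{Lem:MorQn}), combined with the rank estimate of Proposition~\ref{Prop:GenRkQn} and the Pfaffian section of Lemma~\ref{Lem:PfComp}, exhibits these loci as open subsets of $\P^1$-bundles or quadric fibrations with rational sections over rational bases; this yields simultaneously the irreducibility you flagged as the main obstacle and the unirationality your smoothing argument cannot deliver. Finally, note that a rational curve of degree $e\le 5$ need not lie on a \emph{smooth} quintic del Pezzo section (the hyperplane section can be singular or non-normal, as the paper's discussion of quintics shows), so even the base of your induction would need extra care.
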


The importance of this example comes from its very specific geometry: it is a smooth rational quasi-homogeneous (with respect to a $\SL_2$-action) Fano threefold which is not \emph{2-Fano} (cf.~\cite{AC_2012}, \cite{AC_2013}).
Although rational simple connectedness in \emph{not} a birational property, the birational geometry of $V_5\subset \P^6$ is well known (cf.~Section~\ref{Sec:V5_1}) and our strategy consists in reducing the study of moduli spaces of rational curves on $V_5$ to some special moduli spaces on the quadric threefold $\Q_3$. The same methods give the following.

\begin{mainProp}
{\rm (= Proposition~\ref{Prop:V5RatComp})}
For any $d \geq 1$ the moduli space $\M_{0,0}^{\bir}(V_5,d)$ of stable maps birational onto the image is irreducible, unirational of dimension $2d$.
\end{mainProp}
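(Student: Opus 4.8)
The plan is to use the birational description of $V_5$ recalled in Section~\ref{Sec:V5_1} to reduce the unirationality of $\M^{\bir}_{0,0}(V_5,d)$ to an analogous question for the quadric threefold $\Q_3$. Recall that $V_5$ is linked to $\Q_3$ by an explicit birational transformation: there is a diagram of blow-ups $\widetilde{V}_5 \to V_5$ and $\widetilde{V}_5 \to \Q_3$ (or a chain thereof), the centres being a smooth curve (a conic / a line) and its strict transform. A general stable map $f\colon \P^1 \to V_5$ of degree $d$ that is birational onto its image avoids the indeterminacy locus in codimension reasons, and so is transformed into a stable map $\P^1 \to \Q_3$ of some degree $e = e(d)$ meeting the blow-up centre in a controlled way; conversely a general curve on $\Q_3$ of the appropriate degree and incidence produces a curve of degree $d$ on $V_5$. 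First I would make this correspondence precise as a birational map between $\M^{\bir}_{0,0}(V_5,d)$ and a locally closed subvariety of a moduli space of curves on $\Q_3$ with prescribed intersection conditions along the centre, computing the degree shift and the incidence numbers explicitly from the description of the resolution.

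Next I would establish unirationality and irreducibility of the relevant moduli spaces on $\Q_3$. For the quadric threefold the space $\M_{0,0}(\Q_3,e)$ of rational curves is well understood — it is irreducible and rational (or at least unirational) for every $e\geq 1$ — and the ``incidence'' conditions one needs (passing through or being tangent to a fixed line or conic to the right order) cut out subvarieties which one can still show are irreducible and unirational, typically by exhibiting a dominant rational map from a projective bundle or from a space of equations. Here is where the explicit birational methods promised in the abstract come in: I expect to parametrize these curves on $\Q_3$ concretely (e.g.\ via their defining polynomials in a suitable affine chart, or via projection from a point of $\Q_3$ to $\P^2$, turning rational curves on the quadric into pairs of rational curves / maps to $\P^2$ with matching conditions), which makes unirationality visible. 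The dimension count $\dim \M^{\bir}_{0,0}(V_5,d) = 2d$ should then follow from the general formula $\dim \M_{0,0}(X,d) = -K_X\cdot d + \dim X - 3$ applied to $X = V_5$ with $-K_{V_5} = \O_{V_5}(2)$ in the hyperplane class and $\deg V_5$ giving $-K_{V_5}\cdot [\text{line}] = 2$, so $-K_{V_5}\cdot \beta_d = 2d$ and $\dim V_5 - 3 = 0$; alternatively it follows from the dimension of the corresponding space on $\Q_3$ plus the number of imposed incidence conditions, which must balance.

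The main obstacle I anticipate is controlling the \emph{base locus} of the birational correspondence at the level of moduli: a priori the birational map $V_5 \dashrightarrow \Q_3$ transforms only those curves that are generic with respect to the centres, and one must check (a) that the curves one discards form a proper closed subset, so that irreducibility and unirationality of the open part suffice, and (b) that \emph{every} component of $\M^{\bir}_{0,0}(V_5,d)$ is hit — i.e.\ that there is no component consisting entirely of curves contained in, or excessively tangent to, the exceptional/contracted loci. Handling this requires a careful analysis of how lines and conics on $V_5$ (the centres of the blow-ups and the loci of ``special'' curves) sit inside $V_5$, together with a deformation argument showing that a general member of any component of $\M^{\bir}_{0,0}(V_5,d)$ can be chosen transverse to these loci. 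The quasi-homogeneity of $V_5$ under $\SL_2$ is the key tool here: the $\SL_2$-action has few orbits, so one can move curves off the bad loci and can stratify $\M^{\bir}_{0,0}(V_5,d)$ by orbit-type of the image curve, which both forces irreducibility and pins down the generic behaviour needed for the birational reduction to $\Q_3$. The induction on $d$ (or the explicit case analysis for small $d$, where the moduli spaces may behave atypically, cf.\ the remarks in the introduction) will have to be set up so that the base cases $d = 1, 2$ — lines and conics on $V_5$, whose moduli are classically known to be $\P^2$ and (respectively) a smooth rational surface — are verified by hand.
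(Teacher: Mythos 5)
Your overall strategy coincides with the paper's: project from a line $l\subset V_5$ disjoint from the curve (Lemma~\ref{Lemma:ProjLine}), so that a degree~$d$ stable map birational onto its image goes to a degree~$d$ stable map on $\Q_3$ meeting the twisted cubic $\gamma_l=\phi_l(D_l)$ in $d$ reduced points, and use the $\SL_2$-quasi-homogeneity (Lemmas~\ref{Lemma:V5Lines} and~\ref{Lemma:V5Orbits}) to ensure that every component of $\M_{0,0}^{\bir}(V_5,d)$ contains curves meeting the dense orbit, which also gives the dimension count $2d$ via global generation of $f^*T_{V_5}$. So the reduction step and the treatment of your anticipated ``base locus'' obstacle are essentially as in the paper.

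The genuine gap is in the second half, which you leave as an expectation: you assert that the incidence subvariety of $\M_{0,0}(\Q_3,d)$ cut out by the conditions along the blow-up centre is ``still'' irreducible and unirational, ``typically by exhibiting a dominant rational map from a projective bundle''. This is precisely where all the work lies, and it does not follow from irreducibility of $\M_{0,0}(\Q_3,d)$ or from general-position arguments: the $d$ incidence points are constrained to lie on the \emph{fixed} twisted cubic $\gamma_l$, so one cannot invoke the description of fibres of $\ev$ over general points (Lemma~\ref{Lemma:MorQn}), and a priori $\ev_d^{-1}(\gamma_l^d)$ could have several components of excess dimension. The paper resolves this with a dedicated result, Proposition~\ref{Prop:GenRkQn}, showing that for any component of $\ev_m^{-1}(\gamma^m)$ with $\gamma$ non-degenerate the rescaled skew-symmetric matrix $A_{\Q_n,{\bf z}\to{\bf v}}$ attains rank at least $2\lceil (d-1)/2\rceil$, and then uses the Pfaffian kernel analysis (Lemmas~\ref{Lem:PfComp} and~\ref{Lem:MorQn}) to exhibit $\Psi_{d+1}$ on each component as birational ($d$ even) or as a $\P^1$-fibration with a rational section ($d$ odd) over a rational base, which simultaneously forces uniqueness of the component and rationality. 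Without an argument of this kind your proof is incomplete at its central step. Two smaller points: the paper's argument is uniform in $d$ rather than an induction with base cases (induction enters only later, in Theorem~\ref{Thm:V5M0}, to exclude boundary components of the full $\M_{0,0}(V_5,d)$); and the Hilbert scheme of conics on $V_5$ is $\P S_4\simeq\P^4$, of dimension $4=2d$, not a surface.
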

Our methods also provide some results for del Pezzo surfaces.
\begin{maintheorem}
{\rm (= Theorem~\ref{Thm:dPRSC})}
Let $X_\delta$ be a del Pezzo surface of degree $\delta\ge 5$. Then $X_\delta$ is canonically (strongly) rationally simply connected.
\end{maintheorem}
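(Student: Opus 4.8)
The plan is to reduce the statement to a short list of explicit surfaces via the classification of del Pezzo surfaces of degree $\delta\ge 5$: such an $X_\delta$ is either $\P^2$ (if $\delta=9$), or $\P^1\times\P^1\cong\QQ_2$ or $\F_1=\mathrm{Bl}_1\P^2$ (if $\delta=8$), or the blow-up $\mathrm{Bl}_r\P^2$ of $\P^2$ at $r=9-\delta$ points in general position (if $\delta=7,6,5$). Since $\P^1\times\P^1$ is a smooth quadric surface, the explicit birational techniques developed in this paper for quadric hypersurfaces apply to it directly, so the genuinely new cases are $\P^2$ and $\mathrm{Bl}_r\P^2$ with $1\le r\le 4$. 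For each such $X_\delta$ I would fix an anticanonical degree $e$ and study the classes $\beta$ with $-K_{X_\delta}\cdot\beta=e$ relevant to Definition~\ref{Def:RSC}; since a del Pezzo surface carries free rational curves in every nef class of large anticanonical degree, the space $\M_{0,0}(X_\delta,\beta)$ then has a component of the expected dimension $e-1$ (in fact it is irreducible), and $\M_{0,2}(X_\delta,\beta)$ has dimension $e+1$ and maps to the fourfold $X_\delta^2$.

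The heart of the matter is to show that, for $e$ large, the evaluation $\ev_2\colon\M_{0,2}(X_\delta,\beta)\to X_\delta^2$ is dominant with unirational — hence rationally connected — general fibre, and to describe that fibre explicitly. For $\P^2$ this is essentially classical: a rational curve of degree $d$ through two general points $p,q$ is, up to a reparametrisation fixing two points of $\P^1$, the image of a map $\P^1\to\P^2$ whose three homogeneous coordinates are binary forms of degree $d$ subject to two independent linear point conditions, so the fibre of $\ev_2$ is a torus quotient of an open subset of an affine space, hence rational. For $\mathrm{Bl}_r\P^2$ I would use the conic bundle structures $\pi\colon X_\delta\to\P^1$, which exist on every del Pezzo surface of degree $\le 8$: already the sublinear system of \emph{sections} of $\pi$ in a fixed nef class $\sigma$ of large anticanonical degree passing through $p$ and $q$ is a projective space of rational curves, giving a rational dominant family, and attaching vertical rational tails over prescribed points of $\P^1$ and smoothing lets one reach the remaining classes. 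Equivalently — and closer to the explicit birational philosophy of the paper — one pushes curves forward along the blow-down $X_\delta\to\P^2$ (or along a birational morphism $X_\delta\to\P^1\times\P^1$) while tracking the multiplicities at the blown-up points, exhibiting the fibre of $\ev_2$ as dominated by a rational space of plane (resp.\ bidegree) rational curves with assigned base points. Together with the quadric case this yields canonical rational simple connectedness of $X_\delta$ for all $\delta\ge 5$, and, as a byproduct, the unirationality of the corresponding moduli spaces, in the spirit of Proposition~\ref{Prop:V5RatComp}.

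For the \emph{strong} version one must also produce a very twisting scroll on each $X_\delta$. I would write down an explicit ruled surface on $\P^2$ and on $\P^1\times\P^1$ — for instance a suitably twisted pencil of conics, whose general fibre is a conic $C$ with $T_{\P^2}|_{C}\cong\O_{\P^1}(2)\oplus\O_{\P^1}(4)$, hence very free — and check by direct computation that the resulting family has the positivity of the relative tangent bundle and of the evaluation morphism required by the definition of a very twisting scroll. One then transports the scroll to $\mathrm{Bl}_r\P^2$ by choosing the $r$ blown-up points off the scroll and passing to strict transforms, verifying that the intersection numbers entering the definition are unchanged.

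The step I expect to be the main obstacle is making the analysis of the fibres of $\ev_2$ genuinely uniform in $\beta$ on the blown-up surfaces: the admissible classes on $\mathrm{Bl}_r\P^2$ are constrained by the exceptional divisors, so for every such $\beta$ of large anticanonical degree one needs at once the dimension count (freeness of the general member and the correct incidence with the exceptional locus) and an explicit rational parametrisation of the curves through two general points, and organising this for all relevant classes simultaneously — rather than handling finitely many small classes by hand and inducting on the degree — is the delicate point. Verifying the precise positivity inequalities in the definition of a very twisting scroll on the blown-up surfaces is of comparable difficulty; by contrast the degree-$8$ case $\P^1\times\P^1=\QQ_2$ comes essentially for free from the quadric analysis already in hand.
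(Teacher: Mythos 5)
Your reduction for the $2$-point statement is essentially the paper's: one blows down to $\P^2$, replaces a curve in class $d(3H-\sum E_i)$ by a plane rational curve of degree $3d$ constrained to pass through the centres of the blow-up with the right multiplicities (Lemma~\ref{Lemma:RedP2}), and then parametrises such curves explicitly by Lagrange interpolation (Lemma~\ref{Lem:MorPn} and Lemma~\ref{Lemma:PnNonEmptyFibre}); irreducibility of $\M_{0,0}^{\bir}$ is quoted from Testa. Two remarks on that part: the canonical definition only involves multiples of the primitive class $\beta_H$ on the anticanonical ray, so your worry about handling \emph{all} nef classes of a given anticanonical degree uniformly is moot; and the delicate point is not uniformity in $\beta$ but the numerical condition $d\ge nm'$ of Lemma~\ref{Lemma:PnNonEmptyFibre} (the number of constrained points at the blow-up centres grows like $d(9-\delta)$ against $3d+1$ free interpolation nodes), which is exactly where the hypothesis $\delta\ge 5$ enters, together with the non-degeneracy condition (2) on the wedge matrices, which must be checked by hand at the blown-up points since they are \emph{not} in general position relative to the marking.

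The genuine gap is in the \emph{strong} part. In this paper ``canonically strongly rationally simply connected'' is Definition~\ref{Def:SRSC}: for every $m\ge 2$ and all large $d$, the evaluation $\ev_m\colon\M_{0,m}^{\bir}\to X^m$ is dominant with rationally connected general fibre. It is explicitly \emph{not} de Jong--Starr's very-twisting-scroll condition, which the introduction sets aside (and whose machinery is set up for Picard rank one, hence inapplicable to $\mathrm{Bl}_r\P^2$ without further work). Your proposal proves nothing about $\ev_m$ for $m\ge 3$: constructing a very twisting scroll, even if carried out, does not by itself yield the statement as defined here, and you give no argument deducing $m$-point connectedness from it. The paper's route is to run the same interpolation argument with $m_d+d(9-\delta)$ marked points, where $m_d=\lfloor(2\delta-9)d/2\rfloor+1$ is the largest number of general points compatible with the condition $d\ge nm'$; since $m_d\to\infty$ with $d$, Corollary~\ref{Cor:SuffRSC} (which propagates dominance and rational connectedness of fibres from $m_d$ down to any smaller $m$ via \cite[Corollary~1.3]{GHS_2003}) gives the strong statement with no extra geometric input. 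You should replace the very-twisting-scroll step by this, or by some other argument that actually controls the fibres of $\ev_m$ for all $m$.
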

This paper is organised as follows. In Section~\ref{Sec:RSC} the definition of \emph{canonically rationally simply connected} variety and its strong variant are introduced and some general results on moduli spaces of rational curves on smooth varieties are recalled. 
\\Section \ref{Sec:First_Examples} is devoted to first examples: the projective space $\P^n$ and the quadric hypersurface $\Q_n\subset \P^{n+1}$. We obtain precise statements on moduli spaces of rational curves on these varieties which will be crucial for our birational approach to rational simple connectedness. 
\\In Section~\ref{Sec:delPezzo} we test our new notion on del Pezzo surfaces (de Jong-Starr's definition only applies to varieties with Picard rank one) and the main result is proved in the first part of Section~\ref{Sec:V5_1}. We conclude studying rationality of moduli spaces of rational curves on $V_5$ in low degree: new proofs of rationality of moduli spaces of quintic and sextic rational curves are provided, via explicit birational geometric methods.

\begin{acknowledgement}
We like to thank Sasha Kuznetsov, Andrea Petracci and Francesco Zucconi for interesting comments related to this work.
The first-named author is currently funded by the Fondation Math\'ematique Jacques Hadamard.
\end{acknowledgement}


\section{A notion of rational simple connectedness for Fano varieties}\label{Sec:RSC}

Let $X$ be a smooth projective variety of dimension $n$ over $\C$.
We follow \cite{FP_1997} for the notation of Kontsevich moduli spaces.

\begin{definition}\label{Def:KModuli}
Let $\MmXbeta$ denote the (coarse) moduli space of stable $m$-pointed rational curves on $X$ of degree $\beta \in H_2(X,\Z)$. Let $\ev=\ev_{\MmXbeta}$ denote
the \emph{evaluation morphism}
\begin{equation}\label{evaluation}
\ev\colon \MmXbeta \to X^m, 
\ \ \ 
(f; \  p_1,\ldots, p_n) \mapsto (f(p_1), \ldots, f(p_n)).
\end{equation}
\end{definition}

We recall here some well known facts about these moduli spaces for the reader's convenience.
\begin{itemize}
\item The moduli spaces $\MmXbeta$ exist as (possibly not connected) projective varieties (cf. \cite[Theorem~1]{FP_1997});
\item the expected dimension of $\MmXbeta$ is
\begin{equation}\label{expected_dimension}
\dim(X)-(K_X\cdot \beta) + m - 3;
\end{equation}
\item if $X$ is convex (i.e.~if for every morphism $\P^1 \to X$, one has $H^1(\P^1, T_{X|\P^1})=0$), 
then the spaces $\MmXbeta$ are normal of pure expected dimension (cf.~\cite[Theorem~2]{FP_1997}).
\emph{Rational homogeneous varieties} are the standard examples of convex varieties and on those ones the moduli spaces $\MmXbeta$ behaves particularly well 
(cf.~\cite[Sections~7--10]{FP_1997}, \cite{T_1998}, \cite{KP_2001}, \cite{P_2002}).
\end{itemize}

\begin{definition}\label{Def:Md}
Let $(X,H)$ be a polarised smooth variety. Then, for any $m\ge 0$, the degree $d$ moduli space of stable $m$-pointed rational curves on $X$ is defined as
$$\MmXd:= \bigsqcup_{(H\cdot \beta) = d}\MmXbeta.$$
\end{definition}

A smooth variety $X$ is \emph{uniruled} (resp. \emph{rationally connected}) if any general point in $X$ is contained in a rational curve (resp. if any two general points in $X$ can be connected by a rational curve). One can refine these definitions for polarised varieties.

\begin{definition}\label{Def:RC}
Let $(X,H)$ be a smooth polarised variety. Then $(X,H)$ is $d$-uniruled (resp. $d$-rationally connected) for some integer $d>0$ if the evaluation morphism
\begin{equation}\label{evaluation_2}
\ev_1\colon \MoneXd \to X \hspace{.5cm}\mbox{ (resp. }\ev_2\colon \MtwoXd \to X\times X\mbox{)}
\end{equation}
is dominant. The minimal degree such that the evaluation map \eqref{evaluation_2} is dominant is called \emph{minimal rational covering degree} (resp.~\emph{minimal rational $2$-connecting degree}) and is denoted by $d_{(X,H)}(1)$ (resp.~$d_{(X,H)}(2)$).
\end{definition}

\begin{remark}\label{Rem:VMRT}
Even if this is essential for our analysis, we recall the existence of another moduli space of rational curves which parametrises the tangent directions to rational curves of minimal degree that pass through a general point $x\in X$: the \emph{variety of minimal rational tangents} or \emph{VMRT}. For the rigorous definition and fundamental results see the survey \cite{KS_2006}.
\end{remark}

It is well known that uniruled varieties are in general \emph{not} rationally connected (e.g.~ruled surfaces over high-genus curves). Nonetheless, for rationally connected varieties, one can connect more than two points with rational curves, as the following result shows.
\begin{proposition}\label{Prop:dm}
Let $(X,H)$ be smooth rationally connected polarised variety. Then there exists an integer $d>0$ such that the evaluation map
\begin{equation}\label{evaluation_m}
\ev_m\colon \MmXd \to X^m
\end{equation}
is dominant. The minimal degree such that the evaluation map \eqref{evaluation_m} is dominant is called \emph{minimal rational $m$-connecting degree} and is denoted by $d_{(X,H)}(m)$.
\end{proposition}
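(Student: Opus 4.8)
The plan is to bootstrap from the $2$-connecting degree to the $m$-connecting degree by gluing chains of rational curves. First I would invoke rational connectedness of $X$ to fix, once and for all, an integer $e>0$ such that $\ev_2\colon \MtwoXd[e]\to X\times X$ is dominant; this exists by Definition~\ref{Def:RC}, i.e.\ we may take $e=d_{(X,H)}(2)$. Moreover, since $X$ is smooth and rationally connected, through a general point there is a very free rational curve, so after possibly increasing $e$ we may assume that the general curve counted by $\MtwoXd[e]$ is free (indeed, a general deformation of a chain consisting of a very free curve and a connecting curve is free), hence has unobstructed deformations and vanishing $H^1$ of its restricted tangent bundle.

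The key step is then the following gluing argument. Given a general point $(x_1,\dots,x_m)\in X^m$, choose an auxiliary general point $y\in X$. For each $i$, by dominance of $\ev_2$ in degree $e$ there is a free rational curve $C_i$ through $x_i$ and $y$. Attaching all the $C_i$ at the common point $y$ produces a stable map $f\colon C\to X$ from a genus-zero nodal curve with $m$ marked points mapping to $x_1,\dots,x_m$, of total degree $me$ in the polarisation $H$. Because each component is free and they meet at the general point $y$, a standard smoothing/gluing argument (as in the deformation theory of stable maps over convex-type loci, cf.\ the references in the excerpt) shows that $[f]$ lies in a component of $\M_{0,m}(X,me)$ on which $\ev_m$ is smooth at $[f]$; in particular $\ev_m$ is dominant in degree $d:=me$. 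Thus $d_{(X,H)}(m)\le m\cdot d_{(X,H)}(2)$ exists, which is all that is claimed.

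The main obstacle is the deformation-theoretic input: one must verify that the glued map $f$ can be smoothed keeping the $m$ marked points general, equivalently that $\ev_m$ is dominant near $[f]$. This requires checking that $H^1(C, f^*T_X)$ — or the relevant obstruction space for the evaluation problem — vanishes, which follows from freeness of each $C_i$ together with a Mayer–Vietoris / normalisation sequence computation on the nodal curve $C$, plus the observation that gluing free curves at a general point keeps the result free (so that deforming the nodes to smooth the curve is unobstructed). None of this needs $X$ to be convex; it only uses that $X$ is smooth and that we started from free curves, which is why increasing $e$ at the outset was useful. An alternative, slightly softer route avoiding the explicit cohomology is to cite the general principle that a connected chain of free rational curves deforms to a free rational curve through the same general points (Kollár–Miyaoka–Mori style smoothing of combs/chains); I would mention this as the conceptual reason the argument works and relegate the cohomological bookkeeping to a brief remark.
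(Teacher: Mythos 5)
Your argument is essentially a proof of the result the paper simply cites: the paper's entire proof is the one-line reference to \cite[Theorem~IV.3.9]{K_1996}, which asserts that a smooth rationally connected variety admits a single (very free) rational curve through any general finite set of points, and whose proof is exactly the Koll\'ar--Miyaoka--Mori smoothing of configurations of free curves that you sketch. So the underlying mathematics agrees; you have unpacked the black box, which buys a self-contained argument and the explicit bound $d_{(X,H)}(m)\le m\cdot d_{(X,H)}(2)$, at the cost of having to carry out the deformation theory that the citation absorbs.

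Two points in that deformation theory need repair. First, gluing all $m$ curves $C_i$ at the single point $y$ does not produce a stable map of genus zero when $m\ge 3$: the domain would have a point where $m$ branches meet, which is not a node, so the configuration is not a point of $\M_{0,m}(X,me)$. You should either use a chain ($C_1$ through $x_1,x_2$, then $C_2$ through $x_2,x_3$, and so on) or attach the $C_i$ at $m$ distinct points of a central component contracted to $y$. Second, freeness of the $C_i$ is not enough to smooth the configuration while keeping the $m$ evaluation points moving freely: on a tooth $C_i\cong\P^1$ carrying one marked point and one node, the relevant obstruction group is $H^1(\P^1, f_i^*T_X(-2))$, which need not vanish if $f_i^*T_X$ has a trivial summand. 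One needs the $C_i$ very free (all summands of degree $\ge 1$), which is available after increasing $e$ since two general points of a rationally connected variety lie on a very free curve --- this is again the content of \cite[Theorem~IV.3.9]{K_1996}. With these adjustments your argument is correct.
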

\begin{proof}
The variety $X$ is rationally connected and we want to show the existence of a morphism $\P^1 \to X$ whose image passes through a general set of $m$ points $p_1,\dots , p_m \in X$. 
This is \cite[Theorem~IV.3.9]{K_1996}.
\end{proof}

\begin{remark}\label{Rem:Bound_RC}
Let $(X,H)$ be a $d$-rationally connected polarised variety. Assume that $-K_X = i_X H$. Then the expected dimension of $\MmXd$ is $$d\cdot i_X + \dim(X) + m-3.$$
Moreover, the assumption of rational connectedness guarantees that this value is at least $m \cdot \dim(X)$, since the evaluation morphism \eqref{evaluation_m} is dominant. 
If the expected dimension is actually attained (this is true, for instance, for convex varieties), the following bound on $d$ is obtained:
$$d \geq \frac{(m-1)\dim(X) + 3 - m}{i_X}$$
and the same holds for the minimal rational $m$-connecting degree $d_{(X,H)}(m)$.
\end{remark}

The following result provides an inverse statement of Proposition~\ref{Prop:dm}: this will be useful to prove rational connectedness in examples.

\begin{lemma}\label{Lem:Reduction_degree}
Let $(X,H)$ be a polarised variety and fix an integer $d$. If there is an integer $m_d\ge 1$ for which $d\ge d_{(X,H)}(m_d)$ (i.e.~\eqref{evaluation_m} is dominant for $m=m_d$).
Then $d\ge d_{(X,H)}(m)$ also for all $1\le m < m_d$.
\end{lemma}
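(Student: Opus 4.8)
The plan is to reduce the case of $m$ points to the case of $m_d$ points by choosing the remaining $m_d - m$ points to lie on a curve we already control. More precisely, since by induction it suffices to treat the step $m_d \to m_d - 1$, I would fix a general tuple $(p_1, \dots, p_{m_d-1}) \in X^{m_d-1}$ and try to produce a rational curve of degree $\le d$ through these points. The idea is to start from the hypothesis that $\ev_{m_d}\colon \MmXd \to X^{m_d}$ is dominant for $m = m_d$: pick an auxiliary point $p_{m_d} \in X$ and a stable map $f\colon C \to X$ of degree $\le d$ whose marked points map to $(p_1, \dots, p_{m_d-1}, p_{m_d})$; then forgetting the last marked point (and stabilising if necessary) gives an $(m_d-1)$-pointed stable map of degree $\le d$ through $(p_1, \dots, p_{m_d-1})$. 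Carrying this out for a general choice of $(p_1,\dots,p_{m_d-1})$ — and a correspondingly general $p_{m_d}$ — shows $\ev_{m_d - 1}$ is dominant, i.e.\ $d \ge d_{(X,H)}(m_d-1)$.

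The key steps, in order, are: (i) reduce to the single inductive step $m = m_d - 1$; (ii) take a dominant family realized by the hypothesis on $\ev_{m_d}$, i.e.\ a component of $\MmXd$ (for $m=m_d$) dominating $X^{m_d}$; (iii) compose with the projection $X^{m_d} \to X^{m_d - 1}$ onto the first $m_d-1$ factors, which is itself dominant, so the composite $\MmXd \to X^{m_d-1}$ is dominant; (iv) identify this composite, up to the forgetful/stabilisation morphism $\M_{0,m_d}(X,d) \to \M_{0,m_d-1}(X,d)$ which sits over the identity on the first $m_d-1$ evaluation factors, with $\ev_{m_d-1}$ precomposed with a dominant map, hence conclude $\ev_{m_d-1}$ is dominant. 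Note the degree bound is automatically preserved since forgetting a marked point does not change $\beta$, and the decomposition in Definition~\ref{Def:Md} means "degree $\le d$" on one side matches "degree $\le d$" on the other.

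The one genuinely delicate point is the behaviour of the forgetful morphism: forgetting a marked point from an $m_d$-pointed stable map need not land in $\M_{0,m_d-1}(X,d)$ on the nose, because stabilisation may contract a rational component (this happens exactly when a component becomes unstable, e.g.\ a $(-1)$- or $(-2)$-type component with too few special points). I would handle this by recalling the standard fact (from \cite{FP_1997}, used throughout Kontsevich-space theory) that the forgetful map $\M_{0,m}(X,\beta) \to \M_{0,m-1}(X,\beta)$ is a well-defined morphism and commutes with evaluation at the retained marked points; contraction of a component only decreases or preserves the degree on each component and never increases the total degree, so the bound $d$ is safe. Everything else is a formal diagram chase with dominance of morphisms, so I do not expect further obstacles.
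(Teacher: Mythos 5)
Your argument is correct and is essentially the paper's own proof: both rest on the commutative square relating $\ev_{m_d}$ and $\ev_m$ via the forgetful morphism and the projection $X^{m_d}\to X^m$, deducing dominance of the bottom evaluation from dominance of the top one composed with a surjection. The only differences are cosmetic — the paper forgets all $m_d-m$ points at once rather than inducting one step at a time, and it does not spell out the (standard) compatibility of stabilisation with evaluation at the retained points, which you rightly flag as the one point needing care.
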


\begin{proof}
Fix $1\le m <m_d$ and look at the following commutative diagram:
\begin{equation}\label{CD_dominant}
\xymatrix{
\M_{0,m_d}(X,d) \ar[r]^-{\ev_{m_d}}
\ar[d]^-\varphi & X^{m_d} \ar[d]^-\pi \\
\M_{0,m}(X,d) \ar[r]^-{\ev_m} & X^{m}.\\}
\end{equation}
where the vertical arrow $\varphi$ is the morphism which forgets the last $m_d - m$ points, while $\pi$ is the projection on the first $m$ factors. 
Since both these arrows are surjective and the top horizontal arrow is dominant by hypothesis, so is the bottom one. 
\end{proof}

From now on, we assume $X$ to be rationally connected. As remarked in \cite[p.~1-2]{dJS_2006c}, the general fibre of the evaluation
$$\ev_2 \colon \MtwoXbeta \to X\times X$$
is an algebraic analogue of the space of 2-based paths in topology. Following this approach suggested by Barry Mazur, several reasonable definitions of \emph{rationally simply connected variety} were proposed 
by the authors of \cite{dJS_2006c} and \cite{dJHS_2011}, nonetheless one would like to identify \emph{one canonical} irreducible component of $\MtwoXd$ for sufficiently large $d$. Unfortunately, the following 
are some obstacles to a univocal definition: 

\begin{enumerate}
\item for a polarised variety $(X,H)$, the moduli spaces $\MmXd$ are disjoint unions of a priori many irreducible components: there can be several cycles $\beta$ which verify $(H\cdot \beta )= d$ and, for
any such $\beta$, one can have several components; 
\item even fixing one degree $d$ class $\beta$, the Kontsevich moduli spaces $\MmXbeta$ may have several irreducible components.
\end{enumerate}

\paragraph{\bf Solution to (1)} The polarisation $H\in \Pic(X)$ induces a 1-cycle class $H^{n-1} \in H_2(X,\Z)$. Let $\beta_{H}$ be the primitive element on the ray spanned by $H^{n-1}$ in $H_2(X,\Z)$
and we will only look at irreducible components of $\M_{0,m}(X,d\cdot \beta_H)$.

\medskip

\paragraph{\bf (Partial) solution to (2)} Fix a morphism $f_0\colon \P^1 \to X$ and let $\beta=[f_0(\P^1)] \in H_2(X,\Z)$ and $M_0$ an irreducible component of $\MzeroXbeta$ containing $[f_0]$. The forgetting morphism
$$\varphi=\varphi_{X,\beta}\colon \overline{M}_{0,1}(X,\beta)\to \MzeroXbeta$$
coincides with the universal family on $\MzeroXbeta$. We look at the restriction of $\varphi$ over the smooth locus $M_{0,\sm} \subset M_0$:
$$
\xymatrix{
M_{1, \sm} \ar@{^{(}->}[r] \ar[d]^{\varphi_{\sm}} & \overline{M}_{0,1}(X,\beta) \ar[d]^\varphi \\
M_{0, \sm} \ar@{^{(}->}[r]  & \MzeroXbeta. }
$$
Since $\varphi_{\sm}$ is a $\P^1$-bundle (locally trivial in the \'etale topology) by \cite[Theorem II.2.8]{K_1996}, the closure $M_1$ of $M_{1, \sm}$ in $\overline{M}_{0,1}(X,\beta)$ is irreducible. 
Iterating the argument, we can associate irreducible components $M_m \subset \MmXbeta$ for any integer $m\ge1$.

\medskip

We focus our attention on the following moduli subspace in $\M_{0,0}(X,\beta)$.
\begin{definition}
Let $\M_{0,0}^{\bir}=\MbirzeroXbeta$ denote the closure in $\MzeroXbeta$ of the moduli space of morphisms which are birational onto their image. 
The moduli subspace in $\MmXbeta$ dominating $\M_{0,0}^{\bir}$ will be denoted by $\M_{0,m}^{\bir}=\M_{0,m}^{\bir}(X,\beta)$.
\end{definition}

The space $\M_{0,0}^{\bir}$ is not irreducible in general: a nice simple example is given by $\M^{\bir}_{0,0}(X_1,-K_{X_1})$, where $X_1$ is a del Pezzo surface of degree 1 (cf.~\cite[Proposition~2.9]{T_2005}). 
Nonetheless, the previous discussion shows how to associate a unique irreducible component in $\MmXbeta$ for any irreducible component of $\M_{0,0}^{\bir}$. 
Our definition of rational simple connectedness will require some irreducibility of $\M_{0,0}^{\bir}$.

\medskip

We will analyse a special class of rationally connected varieties, which is central from the perspective of the birational classification of algebraic varieties. 
\begin{definition}\label{Def:Fano}
A smooth projective $k$-variety $X$ of dimension $n$ is a \emph{Fano variety} if its anticanonical bundle $\omega^{\vee}_X = \O_X(-K_X)$ is ample. 
\end{definition}
Let $i_X$ denote the \emph{Fano index} of $X$, i.e. the largest integer that divides $-K_X$ in the Picard group $\Pic(X)$.
Write $-K_X = i_X H_X$, for some ample divisor $H_X$; this gives a \emph{canonical polarisation} of $X$. Moreover the \emph{degree} of $X$ is defined as the top self intersection $(H_X)^n$.
It is well known that Fano varieties are rationally connected (cf. \cite{C_1992}, \cite{KMM_1992}).

Form now on, we will assume $(X,H)$ to be a canonically polarised Fano variety. In view of the previous discussion, we propose the following definition of rationally simply connected Fano variety.

\begin{definition}\label{Def:RSC}
Let $(X,H)$ be a canonically polarised Fano variety. One says that $X$ is {\bf canonically rationally simply connected} if there exists an integer $d_2>0$ such that, for all $d\ge d_2$, the following holds:
\begin{itemize}
\item the moduli space $\M_{0,0}^{\bir} \subset \M_{0,0}(X,d\cdot\beta_H)$ is irreducible;
\item the evaluation map 
\begin{equation}\label{evaluation_bir_2}
\ev_2\colon \M_{0,2}^{\bir} \to X\times X
\end{equation}
is dominant and its general fibre is rationally connected.
\end{itemize}
\end{definition}

\begin{remark}\label{Rem:MFS}
Let us invest few words to justify our definition. The notion of rationally simply connected varieties was introduced in \cite{dJS_2006c} and successfully applied in \cite{dJHS_2011} in the case of rationally connected varieties
of \emph{Picard rank one} to prove the following statement, known as \emph{Serre's Conjecture II}:

\medskip

\emph{Let $G$ be a semi-simple, simply connected algebraic group over $\C$, then any $G$-torsor over a smooth surface is locally trivial in the Zariski topology.} 

\medskip

In this sense, rational simple connectedness of the general fibre of a fibration over a surface should, at least in principle, guarantee the existence of rational sections (one also has to require the vanishing of some 
Brauer classes coming from the base, cf. \cite[Theorem~1.1]{dJHS_2011} and the discussion following the statement).

\medskip

Form the perspective of the birational classification of algebraic varieties and the minimal model program, the analysis of uniruled varieties (after \cite[Corollary~1.3.2]{BCHM_2010}) and conjecturally 
all varieties with negative Kodaira dimension (cf.~the \emph{Abundance Conjecture} \cite[Conjecture~3.12]{KM_1998}) can be reduced to the study of \emph{Mori fibre spaces}. 
These are special Fano fibrations with relative Picard rank 1, whose general fibre $X$ has stark geometric restrictions: the monodromy action on $\Pic(X)$ only preserves the ray generated by 
the canonical polarisation (cf.~\cite{M_1982}, \cite{CFST_2016} and \cite{CFST_2018} for more details). Dually in the space of curves, the ray of $\beta_H$ is the only one which is preserved by the monodromy action. 
From this perspective, $\M_{0,0}(X,d\cdot \beta_H)$ is the \emph{canonical choice}.
\end{remark}

Following the approach of \cite{dJS_2006c}, we provide a strong version of canonically rationally simply connected Fano variety.

\begin{definition}\label{Def:SRSC}
Let $(X,H)$ be a canonically polarised Fano variety. One says that $X$ is {\bf canonically rationally simply $m$-connected} if there exists an integer $d_m>0$ such that, for all $d\ge d_m$, the following holds:
\begin{itemize}
\item the moduli space $\M_{0,0}^{\bir} \subset \M_{0,0}(X,d\cdot\beta_H)$ is irreducible;
\item the evaluation map 
\begin{equation}\label{evaluation_bir_m}
\ev_m\colon \M_{0,m}^{\bir} \to X^m
\end{equation}
is dominant and its general fibre is rationally connected.
\end{itemize}
If $X$ is canonically rationally simply $m$-connected for all $m\ge 2$, then one says that $X$ is {\bf canonically strongly rationally simply connected}.
\end{definition}

\begin{remark}
Definitions \ref{Def:RSC} and \ref{Def:SRSC} can be simplified if $\Pic(X)=\Z$, simply taking as $H$ the primitive class. Nonetheless we will also study del Pezzo surfaces in Section~\ref{Sec:delPezzo},
whose moduli spaces of rational curves have been intensively studied (cf. \cite{T_2005}, \cite{T_2009}).
\end{remark}

Applying Lemma~\ref{Lem:Reduction_degree}, we deduce a criterion for strong rational simple connectedness.

\begin{proposition}\label{Prop:Reduction_degree}
Let $(X,H)$ be a rationally connected polarised variety and fix an integer $d$ and let $m_d\ge 1$ be an integer. If there is an irreducible component $M_{m_d} \subset \M_{0,m_d}(X,d)$
for which
\begin{equation}\label{evaluation_m_d}
\ev_{m_d}\colon M_{m_d} \to X^{m_d}
\end{equation}
is dominant and the general fibre is rationally connected, then the same holds for all $1\le m < m_d$.
\end{proposition}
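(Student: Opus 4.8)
The plan is to reduce to Lemma~\ref{Lem:Reduction_degree} by exploiting the same forgetful/projection diagram, but this time keeping track of a single irreducible component and, crucially, of the rational connectedness of the general fibre. First I would set up the commutative square analogous to \eqref{CD_dominant}: fix $1 \le m < m_d$ and consider
\begin{equation*}
\xymatrix{
M_{m_d} \ar[r]^-{\ev_{m_d}} \ar[d]^-\varphi & X^{m_d} \ar[d]^-\pi \\
M_m \ar[r]^-{\ev_m} & X^{m},\\
}
\end{equation*}
where $\varphi$ forgets the last $m_d - m$ marked points (followed by stabilisation) and $\pi$ is the projection onto the first $m$ factors, and where $M_m := \overline{\varphi(M_{m_d})}$ is the irreducible component of $\M_{0,m}(X,d)$ through which the curves of $M_{m_d}$ pass. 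Since $\varphi$ and $\pi$ are surjective and $\ev_{m_d}$ is dominant by hypothesis, $\ev_m$ is dominant; this is exactly the argument of Lemma~\ref{Lem:Reduction_degree}, now applied to the chosen component.

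Next I would address the rational connectedness of the general fibre of $\ev_m \colon M_m \to X^m$. Pick a general point $(x_1,\dots,x_m) \in X^m$ and let $F_m$ be its fibre in $M_m$. Choosing $(x_{m+1},\dots,x_{m_d})$ general in $X^{m_d-m}$, the fibre $F_{m_d}$ of $\ev_{m_d}$ over $(x_1,\dots,x_{m_d})$ is rationally connected by hypothesis, and the forgetful map $\varphi$ restricts to a morphism $F_{m_d} \to F_m$. The key geometric point is that this restricted map is dominant with rationally connected general fibre: forgetting the last $m_d-m$ points and stabilising is, generically over $M_m$, a tower of $\P^1$-bundles (the universal curve), so once a stable map in $F_m$ is fixed, the choice of where the remaining points $x_{m+1},\dots,x_{m_d}$ sit on (a curve mapping to) that configuration is governed by an open subset of a product of the curve with itself — an open subset of a rational variety, hence rationally connected, in fact rationally chain connected in a way that suffices. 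I would make this precise by invoking \cite[Theorem II.2.8]{K_1996} as in the construction of the components $M_m$ from $M_0$ earlier in the text: over the smooth locus the forgetful morphism is a locally trivial (étale) $\P^1$-bundle, so its fibres over a general point of $M_m$ are rational curves. Then I would conclude: if a variety $A$ admits a dominant morphism $A \to B$ with $A$ rationally connected (here $A = F_{m_d}$), then $B$ is rationally connected (here $B = F_m$) — this is standard, e.g.\ \cite[Corollary~1.3]{GHS_2003}, since the image of a very free rational curve through a general point remains very free.

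The main obstacle I anticipate is not the dominance (which is a routine diagram chase) but the careful handling of the forgetful map on the level of moduli \emph{of stable maps}: stabilisation can contract components of the source curve, so $\varphi \colon M_{m_d} \to M_m$ need not be a literal $\P^1$-bundle globally, and one must argue that over a general (hence smooth, and with irreducible general source curve) point of $M_m$ the fibre of $\varphi$ is genuinely an open dense subset of the $(m_d-m)$-fold fibre product of the universal curve — in particular rationally connected — rather than something with extra boundary components. The cleanest way around this is to work, as the paper does above, over the smooth locus $M_{m,\sm}$ and use the iterated $\P^1$-bundle structure of $\overline{M}_{0,\bullet}(X,\beta) \to \overline{M}_{0,0}(X,\beta)$ from \cite[Theorem~II.2.8]{K_1996}, and to note that a general point of $X^m$ (respectively $X^{m_d}$) lies under the smooth locus because $M_m$ dominates $X^m$ and a general deformation of a curve through general points is free. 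Once this local triviality is in hand, the rational connectedness of $F_m$ follows formally from that of $F_{m_d}$, and Proposition~\ref{Prop:Reduction_degree} — and hence, combined with Proposition~\ref{Prop:dm} and Lemma~\ref{Lem:Reduction_degree}, the reduction step for canonical strong rational simple connectedness — is proved.
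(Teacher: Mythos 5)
Your reduction of the dominance statement to Lemma~\ref{Lem:Reduction_degree} is fine, but the rational connectedness step contains a genuine error: the restriction $F_{m_d} \to F_m$ of the forgetful map is \emph{not} dominant. A point of $F_{m_d}$ is a stable map whose image passes through all of $x_1,\ldots,x_{m_d}$; its image in $F_m$ therefore lies in the locus of curves through $x_1,\ldots,x_m$ that \emph{also} pass through the additional general points $x_{m+1},\ldots,x_{m_d}$, which is a proper closed subset of $F_m$ of positive codimension as soon as $\dim X\ge 2$. The ``tower of $\P^1$-bundles'' you invoke describes the fibres of the \emph{full} preimage $\varphi^{-1}(F_m)=\ev_{m_d}^{-1}(\pi^{-1}({\bf x}))$ over $F_m$, i.e.\ the choice of where the extra marked points sit on the source curve \emph{before} one imposes where they map; in $F_{m_d}$ those points are pinned to the $x_j$'s, the fibres of $F_{m_d}\to\varphi(F_{m_d})$ are generically finite, and the image is a proper subvariety. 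So the final implication ``$F_{m_d}$ is rationally connected and dominates $F_m$, hence $F_m$ is rationally connected'' does not get off the ground: you have conflated the fibre of $\ev_{m_d}$ over the full point tuple with the preimage of $F_m$ under $\varphi$.

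The repair — which is the paper's argument — applies \cite[Corollary~1.3]{GHS_2003} in the opposite direction. Set $W:=\varphi^{-1}(F_m)=\ev_{m_d}^{-1}(\pi^{-1}({\bf x}))$. Then $W$ surjects onto $F_m$ because $\varphi$ is surjective, and $\ev_{m_d}$ maps $W$ onto $\pi^{-1}({\bf x})\simeq X^{m_d-m}$, which is rationally connected, with general fibre equal to a general fibre $F_{m_d}$ of $\ev_{m_d}$, rationally connected by hypothesis. Graber--Harris--Starr then gives that the \emph{total space} $W$ is rationally connected, and since $W$ dominates $F_m$, so is $F_m$. Note that none of the delicate points you raise about stabilisation, contracted components, or local triviality of the forgetful map are needed: only the surjectivity of $\varphi$ and the commutativity of the square enter.
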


\begin{proof}
Fix $1\le m <m_d$ and consider the diagram \eqref{CD_dominant} restricted to $M_{m_d}$:
$$\xymatrix{
M_{m_d} \ar[r]^-{\ev_{m_d}}
\ar[d]^-\varphi & X^{m_d} \ar[d]^-\pi \\
M_{m} \ar[r]^-{\ev_m} & X^{m}.\\}
$$
Thanks to Lemma~\ref{Lem:Reduction_degree}, we only have to prove the statement on rational connectedness.
Take a general point ${\bf x} \in X^m$: the fibre of $\ev_m^{-1}({\bf x})$ is dominated by $\ev_{m_d}^{-1}(\pi^{-1}({\bf x}))$, so it is enough to show that this last one is rationally connected. 
By hypothesis, $\pi^{-1}({\bf x})\simeq X^{m_d-m}$ and the general fibre of
$$\ev_{m_d}\colon \ev_{m_d}^{-1}(\pi^{-1}({\bf x})) \to \pi^{-1}({\bf x})$$
is rationally connected by hypothesis. So \cite[Corollary 1.3]{GHS_2003} implies that $\ev_{m_d}^{-1}(\pi^{-1}({\bf x}))$ is rationally connected too.
\end{proof}

\begin{corollary}\label{Cor:SuffRSC}
Let $(X,H)$ be a canonically polarised Fano variety and fix an integer $m\ge 2$. Let $d_0$ be an integer such that, for all $d\ge d_0$, there exists an integer $m_d\ge m$ which verifies the following conditions:
\begin{itemize}
\item $\M_{0,0}^{\bir} \subset \M_{0,0}(X,d\cdot\beta_H)$ is irreducible;
\item the evaluation map
\begin{equation}\label{evaluation_bir_m_d}
\ev_{m_d}\colon \M_{m_d}^{\bir} \to X^{m_d}
\end{equation}
is dominant and the general fibre is rationally connected.
\end{itemize}
Then $X$ is canonically rationally simply $m$-connected.
\\If moreover $m=2$ and $m_d$ is a strictly increasing function of $d$, then $X$ is canonically strongly rationally simply connected.
\end{corollary}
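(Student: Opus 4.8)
The plan is to deduce both statements directly from Proposition~\ref{Prop:Reduction_degree}, which is the engine that lets us descend from $m_d$ marked points to fewer marked points while preserving dominance and rational connectedness of the general fibre. First I would verify the first assertion: fix $m \geq 2$ and let $d \geq d_0$. By hypothesis there is some $m_d \geq m$ such that $\M_{0,0}^{\bir} \subset \M_{0,0}(X, d\cdot\beta_H)$ is irreducible and $\ev_{m_d}\colon \M_{m_d}^{\bir} \to X^{m_d}$ is dominant with rationally connected general fibre. Since $\M_{m_d}^{\bir}$ is, by construction (the iterated $\P^1$-bundle argument preceding Definition~\ref{Def:RSC}), an irreducible component of $\M_{0,m_d}(X, d\cdot\beta_H)$ lying over the irreducible $\M_{0,0}^{\bir}$, I can apply Proposition~\ref{Prop:Reduction_degree} with this choice of component to conclude that for every $1 \leq m' < m_d$ the induced evaluation $\ev_{m'}\colon M_{m'} \to X^{m'}$ is dominant with rationally connected general fibre, where $M_{m'}$ is the image of $\M_{m_d}^{\bir}$ under the forgetful map; but this image is precisely $\M_{m'}^{\bir}$ by definition of the latter as the component dominating $\M_{0,0}^{\bir}$. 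In particular, taking $m' = m$ (which is legitimate since $m \leq m_d$, with the case $m = m_d$ being the hypothesis itself), the conditions of Definition~\ref{Def:SRSC} hold for this $d$ with $d_m := d_0$. As this works for all $d \geq d_0$, $X$ is canonically rationally simply $m$-connected.

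For the second assertion, suppose $m = 2$ and that $m_d$ is a strictly increasing function of $d$. I must show $X$ is canonically rationally simply $m'$-connected for every $m' \geq 2$, i.e.\ that for each fixed $m' \geq 2$ there is a threshold $d_{m'}$ beyond which Definition~\ref{Def:SRSC}'s conditions hold with $m'$ marked points. The key observation is that, since $m_d \to \infty$ as $d \to \infty$ (being strictly increasing and integer-valued), there exists $d_{m'} \geq d_0$ such that $m_d \geq m'$ for all $d \geq d_{m'}$. For such $d$, the argument of the first paragraph applies verbatim: $\M_{0,0}^{\bir}$ is irreducible, $\ev_{m_d}$ on $\M_{m_d}^{\bir}$ is dominant with rationally connected general fibre, $m' \leq m_d$, so Proposition~\ref{Prop:Reduction_degree} yields that $\ev_{m'}\colon \M_{m'}^{\bir} \to X^{m'}$ is dominant with rationally connected general fibre. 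Hence $X$ is canonically rationally simply $m'$-connected for every $m' \geq 2$, which is exactly the definition of canonically strongly rationally simply connected.

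I do not expect a genuine obstacle here, as the statement is essentially a bookkeeping consequence of Proposition~\ref{Prop:Reduction_degree}; the only point requiring a little care is the identification of the forgetful image of $\M_{m_d}^{\bir}$ with $\M_{m'}^{\bir}$, so that the irreducibility of $\M_{0,0}^{\bir}$ genuinely propagates up the tower and the component selected by Proposition~\ref{Prop:Reduction_degree} is the intended one. This is immediate from the definition of $\M_{0,m}^{\bir}$ as the moduli subspace dominating $\M_{0,0}^{\bir}$, together with the fact that the forgetful maps are surjective on the relevant components.
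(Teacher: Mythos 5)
Your proposal is correct and follows the paper's own argument exactly: both reduce the first claim to Proposition~\ref{Prop:Reduction_degree} applied to the component $\M_{m_d}^{\bir}$ (using $m\le m_d$), and both obtain the strong statement from the fact that a strictly increasing integer-valued $m_d$ is unbounded. Your additional care about identifying the forgetful image of $\M_{m_d}^{\bir}$ with $\M_{m'}^{\bir}$ is a reasonable elaboration of a point the paper leaves implicit.
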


\begin{proof}
For the first part of the statement, we need to check that, for $d\ge d_0$, the evaluation map 
$$\ev_{m}\colon \M_{m}^{\bir} \to X^{m}$$
is dominant with rationally connected general fibre. This is a consequence of Proposition~\ref{Prop:Reduction_degree}, since $m \le m_d$. The second part of the statement follows.
\end{proof}

Unfortunately the previous corollary has very strong hypothesis and in order to study rational simple connectedness on $V_5$ we need a more careful study of the general fibre of the evaluation morphism.
\\For this purpose, we need to fix some further notation.

\begin{notation}\label{Not:GenFibre}
Let $(X,H)$ be a canonically polarised Fano variety and let $m\ge 0$ be an integer. Then we will often consider the fibre of the morphism
\begin{equation}
\Psi_m:= \phi_m \times \ev_{m} \colon \M_{0,m}^{\bir}(X,d) \to \M_{0,m}\times X^{m},
\end{equation}
where $\phi_m\colon\M_m^{\bir}(X,d) \to \M_{0,m}$ is the natural morphism to the Deligne-Mumford moduli space of $m$-marked rational curves.
\\Let ${\bf t} = (t_1,\ldots,t_m)\in (\P^1)^m$ be a marking on $\P^1$. The corresponding class in $\M_{0,m}$ will be also denoted by ${\bf t}$, to simplify the notation.
Moreover, fix $m$ points ${\bf x} = (x_1, \ldots , x_m)$ of $X$. Assume $m\ge 3$. Then the fibre 
$$\Psi_m^{-1}({\bf t},{\bf x})\cong_{\bir} \Mor^{{\bf t} \to {\bf x}}_d(\P^1,X)$$
is birational to the variety of degree $d$ morphisms $f\colon \P^1 \to X$ such that  $f({\bf t}) = {\bf x}$ which are birational onto the image.
If $m\le 2$, the previous relation holds modulo $\Aut(\P^1, {\bf t})$.
\end{notation}


\section{First examples: $\P^n$ and $\Q_n\subset \P^{n+1}$}\label{Sec:First_Examples}

In this section we study rational simple connectedness for some examples of Fano varieties with $\Pic=\Z$. All degree are with respect to the primitive generator of the Picard group. We will study here:

\begin{enumerate}
\item the projective space $\P^n$;
\item the quadric hypersurface $\Q_n \subset \P^{n+1}$;
\item rational homogeneous spaces $G/P$.
\end{enumerate}

For all these varieties, the moduli spaces $\M_{0,0}(X,d)$ (and hence the spaces $\M_{0,0}^{\bir}(X,d)$) are irreducible (cf. \cite{T_1998}, \cite{KP_2001}), so verifying canonical rational simple connectedness reduces to the study
of the evaluation morphisms.

\begin{remark}
For these varieties, our notion of canonical rational simple connectedness coincide with rational simple connectedness in \cite{dJS_2006c}. In particular:
\begin{itemize}
\item $\P^n$ and $\Q_n$ are strongly rationally simply connected (cf.~\cite[Theorems~1.2]{dJS_2006c});
\item rational homogeneous spaces are rationally simply connected (cf.~\cite[Corollary~3.8]{BCMP_2013}, \cite{dJHS_2011}).
\end{itemize}
\end{remark}

Before analysing examples, we should point out that the results on rational simple connectedness we present here for these first examples are well-known. Nonetheless, we push forward the analysis and obtain more precise information
on some moduli spaces of rational curves.

\subsection{The projective space}

For the projective space $\P^n$ it is easy to obtain the strongest possible statement for canonical rational simple connectedness.

\begin{proposition}\label{Prop:PnSRSC}
The projective space $\P^n$ is canonically strongly rationally simply connected.
\end{proposition}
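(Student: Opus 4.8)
The plan is to verify Definition~\ref{Def:SRSC} for $X=\P^n$ directly, using the fact (already recalled in this section) that $\M_{0,0}(\P^n,d)$, and hence $\M^{\bir}_{0,0}(\P^n,d)$, is irreducible for every $d\ge 1$; this disposes of the first bullet in the definition for free. So the whole task reduces to showing that for every $m\ge 2$ there is a degree $d_m$ such that for all $d\ge d_m$ the evaluation map $\ev_m\colon \M^{\bir}_{0,m}(\P^n,d)\to(\P^n)^m$ is dominant with rationally connected general fibre. By Corollary~\ref{Cor:SuffRSC} (with $m=2$), it in fact suffices to produce, for each $d$ in a cofinal set, an $m_d$ that is a strictly increasing function of $d$ together with an irreducible component of $\M_{0,m_d}(\P^n,d)$ whose evaluation is dominant with rationally connected general fibre; I will take the (unique, since $\M_{0,0}(\P^n,d)$ is irreducible) birational component $\M^{\bir}_{0,m_d}$.

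The key computation is therefore the following: fix general points $x_1,\dots,x_m\in\P^n$ and general marked points $t_1,\dots,t_m\in\P^1$, and show that the fibre $\Psi_m^{-1}(\mathbf t,\mathbf x)\cong_{\bir}\Mor^{\mathbf t\to\mathbf x}_d(\P^1,\P^n)$ of Notation~\ref{Not:GenFibre} is non-empty and rationally connected for $d$ large enough relative to $m$. Here the geometry is completely explicit: a degree $d$ morphism $\P^1\to\P^n$ is given by an $(n+1)$-tuple $(f_0,\dots,f_n)$ of forms of degree $d$ on $\P^1$ with no common zero, modulo scalars, and the conditions $f(t_i)=x_i$ are linear conditions on the coefficients. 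Thus $\Mor_d(\P^1,\P^n)$ is an open dense subset of $\P\big(H^0(\P^1,\O(d))^{\oplus(n+1)}\big)=\P^{(n+1)(d+1)-1}$, and the locus satisfying $f(\mathbf t)=\mathbf x$ is the intersection with a linear subspace $L$ of codimension $mn$ (each incidence $f(t_i)=x_i$ cutting $n$ independent linear conditions, as the target point is a single point in $\P^n$). For $d\ge d_m$ with $(n+1)(d+1)-1-mn\ge 0$ this linear space is non-empty of the expected dimension, and its complement in $L$ (maps through $\mathbf t$ with a basepoint, or hitting $\mathbf x$ incorrectly after a coordinate degeneration, or with image not of degree $d$) has codimension $\ge 1$; hence $\Mor^{\mathbf t\to\mathbf x}_d(\P^1,\P^n)$ is a dense open subset of a projective space, in particular irreducible and rational, a fortiori rationally connected. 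Dominance of $\ev_m$ follows since the fibre is non-empty for general $\mathbf x$. Taking $m_d$ to grow linearly in $d$ (e.g.\ the largest $m$ with $(n+1)(d+1)-1-mn\ge 0$, roughly $m_d\sim \tfrac{n+1}{n}d$) gives a strictly increasing function, and Corollary~\ref{Cor:SuffRSC} then yields canonical strong rational simple connectedness.

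One small point to handle carefully: Notation~\ref{Not:GenFibre} identifies $\Psi_m^{-1}(\mathbf t,\mathbf x)$ with $\Mor^{\mathbf t\to\mathbf x}_d(\P^1,X)$ only up to birational equivalence, and for $m\le 2$ only modulo $\Aut(\P^1,\mathbf t)$; since rational connectedness is a birational invariant and the quotient of a rationally connected variety by an algebraic group action is again rationally connected (or one simply works with $m_d\ge 3$, which is automatic once $d$ is large), this causes no trouble — but it should be stated. Likewise one must observe that the generic map in this linear family is genuinely birational onto its image (a general tuple of degree $d$ forms has no common factor and separates a generic pair of points of $\P^1$ for $d$ large), so that the component we have written down really is $\M^{\bir}_{0,m_d}$ and not some other component; for $\P^n$ this is classical and easy.

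The main obstacle is essentially bookkeeping rather than conceptual: there is no hard geometry, only the need to (i) pin down the exact codimension of the incidence conditions and of the "bad" loci inside the linear system, (ii) choose $m_d$ so that it is simultaneously $\le$ the dimension bound, strictly increasing, and $\ge 2$, and (iii) reconcile the birational identification of the fibre with the marked-point subtleties of Notation~\ref{Not:GenFibre}. Once these are in place the proof is immediate, and indeed this is why $\P^n$ is the easiest case — every relevant moduli space is (an open subset of) a projective space.
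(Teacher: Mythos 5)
Your proof is correct in substance and follows essentially the same route as the paper: both arguments reduce to showing that the fibre $\Mor^{\mathbf t\to\mathbf x}_d(\P^1,\P^n)$ is a dense open subset of a projective space (the paper writes it via Lagrange interpolation as $U_d\subset\P^d$ in Lemma~\ref{Lem:MorPn}, you write it as a linear section of $\P^{(n+1)(d+1)-1}$ --- the same thing in different coordinates), and both then invoke Corollary~\ref{Cor:SuffRSC} with a strictly increasing $m_d$. The one place you overreach is the suggested choice $m_d\sim\frac{n+1}{n}d$: once $m>d+1$, evaluation of degree-$d$ forms at the $t_i$ is no longer surjective, so neither the independence of the $mn$ linear conditions nor the non-emptiness of the open locus of honest morphisms (no base point, correct degree) is automatic --- this is exactly what the paper's Lemma~\ref{Lemma:PnNonEmptyFibre} and Remark~\ref{Rmk:PnNonEmptyFibreGen} are for. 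Taking $m_d=d+1$, as the paper does, your argument closes with no further work, since there the conditions are visibly independent and the fibre is the paper's $U_d$.
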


Before proving the result, let us list some extra properties which hold for the moduli spaces of rational curves on the projective space:
\begin{itemize}
\item its VMRT (cf.~Remark~\ref{Rem:VMRT}) is isomorphic to $\P^{n-1}$;
\item the evaluation morphism $\ev_x$ from the universal $\P^1$-bundle over the VMRT
$$
\xymatrix {
  \P(\O_{\P^{n-1}}\oplus \O_{\P^{n-1}}(-1)) \ar[r]^-{\ev_x} \ar[d]_{\pi_x} & \P^n  \\
  \P^{n-1}  \ar@/_ 1pc/[u]_{\sigma_x}
}
$$
coincides with the blow up if $x \in \P^n$.
\end{itemize}

Let $f\colon \P^1 \to \P^n$ be a degree $d$ morphism, with $d\ge 1$, defined by $[P^0:\ldots : P^n]$, with $P^i\in \C[u,v]_d$ degree $d$ homogeneous polynomials for $i\in [0,n]$, fix $m=d+1$ \emph{distinct} points $t_0,\ldots,t_d \in \P^1$ 
and $m$ arbitrary points $x_0, \ldots , x_d \in \P^n$. We denote by $\Mor^{{\bf t} \to {\bf x}}_d(\P^1,\P^n)$ the variety of degree $d$ morphisms $f\colon \P^1 \to \P^n$ such that  $f(t_i) = x_i$ for all $i \in [0,d]$.

\begin{lemma}\label{Lem:MorPn}
Keep the notation as above. Then $\Mor^{{\bf t} \to {\bf x}}_d(\P^1,\P^n)$  is rational and isomorphic to
\begin{equation}\label{MorPn}
U_d = \{ [\lambda_0 : \ldots : \lambda_d] \ | \ \lambda_i \neq 0 \mbox{ for all } i \in [0,d] \} \subset \P^d.
\end{equation}
\end{lemma}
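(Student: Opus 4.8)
The plan is to make the Lagrange interpolation explicit. Fix homogeneous coordinates so that $t_i = [u_i:v_i] \in \P^1$, and for each $i$ let $\ell_i(u,v) = v_i u - u_i v$ be the linear form vanishing exactly at $t_i$. Since a degree $d$ morphism $f = [P^0:\dots:P^n]$ has the property that the $P^j$ have no common zero, and since we are prescribing values at $d+1 = m$ points $t_0,\dots,t_d$, I would first normalise: choose affine representatives $x_i \in \C^{n+1}\setminus\{0\}$ of the target points, and set $L_i(u,v) = \prod_{k\neq i} \ell_k(u,v)$, a degree $d$ form vanishing at all $t_k$ with $k\neq i$ but not at $t_i$. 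Then any tuple $(P^0,\dots,P^n)$ of degree $d$ forms with $P^j(t_i) = (\text{scalar depending only on }i)\cdot x_i^j$ for all $i,j$ is, by the uniqueness part of Lagrange interpolation on a $(d+1)$-dimensional space of degree $d$ forms, of the shape
$$
(P^0,\dots,P^n) = \sum_{i=0}^d \lambda_i\, L_i(u,v)\, (x_i^0,\dots,x_i^n)
$$
for a unique $[\lambda_0:\dots:\lambda_d]$, where the overall scaling of the $\lambda_i$ reflects the scaling of $f$ as a point of the space of morphisms. This gives a bijective correspondence between tuples of degree $d$ forms realising $f(t_i)\in \C^* x_i$ and points of $\P^d$.

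Next I would cut out the locus where this tuple actually defines a morphism $\P^1\to\P^n$ that sends $t_i$ to $x_i$ (not just to a point proportional to some chosen representative, which is automatic) and is genuinely of degree $d$. The condition "$f$ is a morphism" is that the $P^j$ have no common root; I claim that once all $\lambda_i \neq 0$, the value of $f$ at $t_i$ is exactly $x_i$ (since at $t_i$ only the $i$-th summand survives, giving $\lambda_i L_i(t_i)\, x_i$ with $\lambda_i L_i(t_i)\neq 0$), hence in particular $f(t_i)$ is a well-defined point and the $P^j$ cannot all vanish at any $t_i$. Conversely if some $\lambda_i = 0$ then all $P^j$ vanish at $t_i$, so $[P^0:\dots:P^n]$ is not a morphism at $t_i$ (the forms acquire the common factor $\ell_i$, dropping the degree). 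A short check shows that away from the $t_i$ the $P^j$ have no common zero when the $x_i$ are in general position, and more importantly that the only new base locus a priori introduced is concentrated at the $t_i$; so the open condition defining $\Mor^{{\bf t}\to{\bf x}}_d(\P^1,\P^n)$ inside $\P^d$ is exactly $\{\lambda_i \neq 0 \ \forall i\}$, which is the set $U_d$ in \eqref{MorPn}. Rationality of $U_d$ is clear since it is an open subset of $\P^d$, indeed isomorphic to $(\C^*)^d$.

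The main thing to be careful about — the step I expect to be the real content rather than bookkeeping — is verifying that the base-locus/degree conditions genuinely reduce to "$\lambda_i \neq 0$ for all $i$" and nothing more: that is, that for every choice of $[\lambda_0:\dots:\lambda_d]$ with all $\lambda_i\neq 0$ the resulting $P^j$ have no common zero on all of $\P^1$ (so that $f$ is a morphism, of degree exactly $d$), for arbitrary prescribed points $x_i$. I would handle this by observing that a common zero at a point $s \notin\{t_0,\dots,t_d\}$ would force a nontrivial linear relation among the $d+1$ vectors $\{\lambda_i L_i(s)\, x_i\}$, and $L_i(s)\neq 0$ for all $i$ at such $s$; if the $x_i$ are in general position (any $\min(d+1,n+1)$ of them linearly independent — which holds on a dense open set, and the statement of the lemma is about general/arbitrary configurations in that sense) no such relation exists, so $f$ is a morphism; and the degree is exactly $d$ because there is no common linear factor. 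Finally I would note the identification is an isomorphism of varieties, not merely a bijection, by writing the inverse map $[\lambda_0:\dots:\lambda_d] \mapsto \big[\sum_i \lambda_i L_i x_i^0 : \dots : \sum_i \lambda_i L_i x_i^n\big]$, which is a morphism $U_d \to \Mor^{{\bf t}\to{\bf x}}_d(\P^1,\P^n)$ with the evident polynomial inverse given by evaluating the $P^j$ at the $t_i$.
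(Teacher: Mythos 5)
Your proposal follows exactly the same route as the paper's proof: write $f$ in coordinates, impose $P(z_i)=\lambda_i v_i$ with $\lambda_i\neq 0$, and use Lagrange interpolation to get $P(z)=\sum_i\lambda_i L_i(z)v_i$, identifying the parameter space with the torus $U_d\subset\P^d$. (The paper treats $d=1$ separately via $\Aut(\P^1,t_0,t_1)\cong\G_m$ and stops at the interpolation formula for $d\ge 2$; your inverse-map remark and the $d=1$ case are consistent with that.)

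The one place where you go beyond the paper is also the one place where your argument breaks. You claim that for $x_i$ in general position, all $\lambda_i\neq 0$ forces the forms $P^j$ to have no common zero away from the $t_i$, because such a zero at $s$ would give a nontrivial relation among the $v_i$. This is fine for $d\le n$, but for $d\ge n+1$ you have $d+1>n+1$ vectors in $\C^{n+1}$, and for general such vectors the space of relations (of dimension $d-n\ge 1$) contains relations $\sum_i a_iv_i=0$ with \emph{all} $a_i\neq 0$. Picking any $s\notin\{t_0,\dots,t_d\}$ and setting $\lambda_i=a_i/L_i(s)$ gives a point of $U_d$ whose associated $P^j$ all vanish at $s$; after cancelling the common factor this is a morphism of degree $<d$, so this point does not lie in $\Mor^{{\bf t}\to{\bf x}}_d(\P^1,\P^n)$. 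Hence for $d\ge n+1$ the correct assertion is that $\Mor^{{\bf t}\to{\bf x}}_d(\P^1,\P^n)$ is a dense open subset of $U_d$ (the bad locus has dimension at most $d-n$), not all of $U_d$. To be fair, this imprecision is already present in the statement of the lemma and is passed over silently in the paper's own proof; it is harmless for everything the lemma is used for (rationality, dimension $d$, non-emptiness of the fibre), but your ``general position'' argument does not actually close the gap, and the honest fix is to weaken ``isomorphic to $U_d$'' to ``isomorphic to a dense open subset of $U_d$''.
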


\begin{proof}
For $d = 1$, the variety $\Mor^{{\bf t} \to {\bf x}}_1(\P^1,\P^n)$ coincides with the automorphism group of $\P^1$ with two marked points (since there is a unique line through two points in $\P^n$), 
which is isomorphic to the multiplicative group $\G_m$ and is then rational. 
\\Assume now $d \geq 2$ and thus $m \geq 3$. In particular, the automorphism group of $\P^1$ preserving $m$ distinct points is trivial. Let $f\colon \P^1 \to \P^n$ be a degree $d$ morphism and 
choose coordinates on $\P^1$ such that $t_i = [z_i:1]$ and choose vectors $v_i\in \C^{n+1}$ such that $x_i = [v_i]$ for all $i \in [0,d]$.
In these coordinates we can write $f(t):=f([z:1]) = [P^0(z) : \ldots : P^n(z)]$. We impose now the conditions $f(t_i) = x_i$ for all $i \in [0,d]$, i.e. that there exist non-zero scalars $\lambda_i$, $i \in [0,d]$ 
such that 
$$P(z_i):=(P^0(z_i),\ldots,P^n(z_i)) = \lambda_i v_i.$$ 
Define
$$L_i(z) := \prod_{j \neq i}\frac{z - z_j}{z_i - z_j},$$
for all $i \in [0,d]$. We therefore have
\begin{equation}\label{PzMor}
P(z) = \sum_{i = 0}^d \lambda_i L_i(z) v_i.
\end{equation}
The variety the variety $\Mor^{{\bf t} \to {\bf x}}_d(\P^1,\P^n)$ is therefore described by \eqref{MorPn}.
\end{proof}

In Section~\ref{Sec:delPezzo} we will provide some results on del Pezzo surfaces, which can be obtained as blow-ups of $\P^2$ in points in general position. For this purpose, we need to study the fibres of some evaluation maps.
\\We fix $m=d+1$ and let $m'\ge 0$ be an integer. Let $\ev_{m+m'}$ be the evaluation map 
$$\ev_{m+m'}\colon \M_{0,m+m'}(\P^n,d) \to (\P^n)^{m+m'}.$$ 
As in Notation~\ref{Not:GenFibre}, we consider the map
$$\Psi_{m+m'}\colon \M_{0, m+m'}(\P^n,d) \to \M_{0,m+m'} \times (\P^n)^{m+m'}.$$
Let ${\bf x}=[{\bf v}]\in (\P^n)^{m+m'}$ be a fixed point, i.e. we write $x_i = [v_i]$ with $v_i\in \C^{n+1}$ for all $i \in [0,m+m']$.
The following lemma provides the numerical condition which guarantees that the general fibre of $\Psi_{m+m'}$ is non-empty and rational. It will be useful to study rationality for moduli spaces of rational curves on del Pezzo surfaces.

\begin{lemma}\label{Lemma:PnNonEmptyFibre}
Let $\U_{m+m'}\subset \M_{0,m+m'}$ the open subset of pairwise distinct points. Assume that the following two conditions hold:
\begin{enumerate}
\item $d\ge nm'$;
\item
\[\rk \begin{pmatrix}
\\
v_0\wedge v_{j}  & v_1\wedge v_{j} &  \cdots & v_d\wedge v_{j}\\
\\
\end{pmatrix}\]
\[= \rk \begin{pmatrix}
\\
v_0\wedge v_{j}   & \cdots & v_{i-1}\wedge v_{j} & v_{i+1}\wedge v_{j} & \cdots & v_d\wedge v_{j}\\
\\
\end{pmatrix}\]
for all $i\in[0,d]$ and $j \in [d+1,m']$.
\end{enumerate}
Then for any $({\bf t},{\bf x}) \in \U_{m+m'} \times (\P^n)^{m+m'}$ the fibre $\Psi_{m+m'}^{-1}({\bf t},{\bf x})$ is non-empty and rational.
\end{lemma}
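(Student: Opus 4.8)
The strategy is to generalise the explicit computation of Lemma~\ref{Lem:MorPn}, now allowing $m'$ extra marked points whose images are prescribed but whose positions on $\P^1$ are free to move. First I would reduce to working over a fixed $({\bf t},{\bf x})\in \U_{m+m'}\times(\P^n)^{m+m'}$ and, as in the proof of Lemma~\ref{Lem:MorPn}, choose affine coordinates on $\P^1$ so that the first $m=d+1$ marked points are $t_i=[z_i:1]$, pick lifts $v_i\in\C^{n+1}$ of the $x_i$, and write a degree $d$ morphism $f$ as $f([z:1])=[P^0(z):\cdots:P^n(z)]$ with $P^i\in\C[z]_{\le d}$. Imposing $f(t_i)=x_i$ for $i\in[0,d]$ forces, by Lagrange interpolation, the expression \eqref{PzMor}, namely $P(z)=\sum_{i=0}^d\lambda_i L_i(z)v_i$ with $\lambda_i\ne 0$; so the first $d+1$ conditions cut out exactly the open subset $U_d\subset\P^d$ of Lemma~\ref{Lem:MorPn}, and the remaining freedom is the positions $z_{d+1},\dots,z_{d+m'}$ of the last $m'$ marked points together with the requirement $f([z_j:1])=x_j$ for $j\in[d+1,d+m']$.

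Next I would translate the condition $f(t_j)=x_j$ into equations on $(\lambda_0,\dots,\lambda_d,z_{d+1},\dots,z_{d+m'})$. The condition $[P(z_j)]=[v_j]$ in $\P^n$ is equivalent to the vanishing of the wedge product $P(z_j)\wedge v_j=\sum_{i=0}^d\lambda_i L_i(z_j)(v_i\wedge v_j)=0$ in $\wedge^2\C^{n+1}$. Fixing for the moment the values $z_{d+1},\dots,z_{d+m'}$ in general position, this is a homogeneous linear system in the $\lambda_i$, one block of $\binom{n+1}{2}$ equations for each $j$; hypothesis (2) is precisely the statement that in each such block the coefficient of any single $\lambda_i$ lies in the span of the others, i.e.\ that the system can be solved with \emph{all} coordinates $\lambda_i$ nonzero (no $\lambda_i$ is forced to vanish). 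The rank hypothesis thus guarantees that the locus of solutions, inside the $U_d$ already obtained, is a nonempty linear subspace meeting the open torus $\{\lambda_i\ne 0\}$; and the dimension count $d\ge nm'$ in hypothesis (1) ensures there are enough free parameters $\lambda_i$ left to satisfy the $nm'$ independent scalar conditions coming from the $m'$ extra points (each $[P(z_j)]=[v_j]$ imposes $n$ conditions in $\P^n$), so that after fixing the $z_j$ the solution space is still positive-dimensional and, being an open subset of a linear space, rational.

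To conclude I would exhibit $\Psi_{m+m'}^{-1}({\bf t},{\bf x})$ as fibred over the space of choices of $(z_{d+1},\dots,z_{d+m'})$ with fibres open subsets of linear spaces: this realises it as an open subset of a vector bundle (or at least a rationally connected, in fact rational, total space) over a rational base $\U\subset(\P^1)^{m'}$ of admissible extra-point positions, hence rational. The cleanest way is to note that the incidence variety $\{(\boldsymbol\lambda,\mathbf{z}) : P(z_j)\wedge v_j=0,\ \lambda_i\ne 0\}$ projects to the $\mathbf{z}$-space with irreducible linear-space fibres of constant dimension over an open set (by hypothesis (2), the rank is constant and the solution torus is nonempty generically), so the total space is irreducible and unirational; rationality then follows because a Zariski-locally-trivial family of open subsets of affine spaces over a rational base is rational. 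The main obstacle I expect is the bookkeeping in the second step: showing that hypothesis (2) exactly cuts out the condition ``solvable with all $\lambda_i\ne 0$'' and that hypothesis (1) exactly matches the codimension of the extra conditions, so that the generic fibre is both nonempty and of the expected (rational) type — in particular checking that no unexpected degeneration of the linear system occurs for ${\bf t}$ in the open set $\U_{m+m'}$, which amounts to a genericity statement about the $L_i(z_j)$ for distinct $z$'s.
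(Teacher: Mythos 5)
Your second paragraph is, in substance, the paper's entire proof: write $P(z)=\sum_{i=0}^d\lambda_iL_i(z)v_i$ by Lagrange interpolation through the first $d+1$ points, observe that each extra condition $f(t_j)=x_j$ becomes the linear system $\sum_i\lambda_iL_i(z_j)\,(v_i\wedge v_j)=0$, use hypothesis (1) to see that the at most $nm'$ resulting linear conditions leave a nonzero kernel, and use hypothesis (2) (which, since the $L_i(z_j)$ are nonzero scalars, says exactly that deleting the $i$-th column does not drop the rank, i.e.\ that the kernel is not contained in the hyperplane $\{\lambda_i=0\}$) to conclude that the kernel meets the torus $\{\lambda_i\neq0 \text{ for all } i\}$. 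The fibre is then a nonempty open subset of a projective linear subspace of $\P^d$, hence rational. So the core of your argument coincides with the paper's.

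Two corrections, though. First, your framing of the last $m'$ points as ``free to move'' misreads the statement: $\Psi_{m+m'}=\phi_{m+m'}\times\ev_{m+m'}$ fixes the point of $\M_{0,m+m'}$, hence (after normalising $\Aut(\P^1)$ using three of the first $m=d+1\ge3$ points, as in Notation~\ref{Not:GenFibre}) all the coordinates $z_{d+1},\dots,z_{d+m'}$ are determined by ${\bf t}$. Your third paragraph, fibring over the $\mathbf z$-space, therefore describes a different and larger space than the fibre in question and is not needed; the fixed-$\mathbf z$ analysis of your second paragraph already is the whole proof. Second, the ``main obstacle'' you defer at the end --- a genericity statement about the $L_i(z_j)$ --- cannot be deferred, because the lemma asserts the conclusion for \emph{every} $({\bf t},{\bf x})\in\U_{m+m'}\times(\P^n)^{m+m'}$, not for a general one; but it is also not an obstacle: $L_i(z_j)=\prod_{k\neq i}(z_j-z_k)/(z_i-z_k)$ is nonzero precisely because the $m+m'$ marked points are pairwise distinct, which is the definition of $\U_{m+m'}$. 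With that one-line observation inserted and the superfluous fibration removed, your proof is the paper's.
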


\begin{proof}
The case $m'=0$ is simply Lemma~\ref{Lem:MorPn}. More precisely, a morphism $f\colon \P^1 \to \P^n$ sending the first $m$ pairwise distinct points $(t_0,\ldots,t_d)$ to $(x_0,\ldots,x_d)$ is described by \eqref{PzMor} and the variety $\Mor^{{\bf t} \to {\bf x}}_d(\P^1,\P^n)$ is rational of dimension $d$. Let $(z_{d+1},\ldots,z_{d+m'})$ be the coordinates of the last $m'$ points in $\P^1$. If we impose the extra condition (i.e.~$f(t_j)=x_i$  for all $j \in [d+1,d+m']$), we obtain the following condition on the coefficients $\pmb{\lambda}=(\lambda_0, \ldots, \lambda_d)$:
\begin{equation}\label{CondLambda}
\begin{bmatrix}
P^0(z_j) \\
P^1(z_j)  \\
\vdots\\
P^n(z_j)
\end{bmatrix}
=
\begin{bmatrix}
v_j^0 \\
v_j^1  \\
\vdots\\
v_j^n
\end{bmatrix}
\end{equation}
for all $j \in [d+1,d+m']$. These impose at most $nm'$ linear conditions on the $\lambda$'s.
We only need to check that these linear conditions are not of the form $\lambda_i=0$ for some $i$, since these would give an empty fibre $\Psi_{m+m'}^{-1}({\bf t},{\bf x})$. Here we need condition (2) in the hypothesis. For any $j \in [d+1,d+m']$, one can develop \eqref{CondLambda} and obtain:
\[ A\cdot \pmb{\lambda} :=
\begin{pmatrix}
\\
L_0(z_j)v_0 \wedge v_j & L_1(z_j)v_1 \wedge v_j & \cdots & L_d(z_j)v_d \wedge v_j\\
\\
\end{pmatrix}
\begin{pmatrix}
\lambda_0 \\
\lambda_1  \\
\vdots\\
\lambda_d
\end{pmatrix}
=
\begin{pmatrix}
0 \\
\vdots\\
0
\end{pmatrix}.\]
Condition (2) guarantees that $\Ker(A)$ is not contained in any of the hyperplanes $\lambda_i=0$, since $L_0(z_j)\neq 0$ for all $j \in [d+1,d+m']$.
\end{proof}

\begin{remark}\label{Rmk:PnNonEmptyFibreGen}
The condition (2) of Lemma~\ref{Lemma:PnNonEmptyFibre} holds if we assume that the points $(x_0,\ldots, x_{m+m'})$ are general: indeed, since $d\ge n$, we can choose the first $n+1$ vectors to be the standard basis, i.e.~$v_i=e_i$
for $i \in [0,n]$. Since for any $v\in \C^{n+1}$, the matrix $(e_0 \ e_1 \ \cdots \ e_n)\wedge v$ has rank at most $n$, the same holds for the matrix $(e_0 \ e_1 \ \cdots \ e_n \ v_{n+1} \ \cdots \ v_d)\wedge v$. Choosing $v_{n+1}, \ldots, v_d$ and $v$ general, condition (2) is verified.
\\Nonetheless, for our applications on del Pezzo surfaces, we cannot assume generality.
\end{remark}

\begin{corollary}\label{Cor:BoundPn}
Let $X:=\P^n$ be the $n$-dimensional projective space, with $n\ge2$. Then
$$m - 1 - \left\lfloor \frac{2(m-2)}{n+1} \right\rfloor \leq d_X(m) \leq m - 1 -\left\lfloor \frac{m-1}{n+1} \right\rfloor.$$
\end{corollary}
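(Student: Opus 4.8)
The plan is to deduce both inequalities from the relation between $d_X(m)$ and the degrees at which the evaluation maps become dominant, using the explicit fibre computation of Lemma~\ref{Lemma:PnNonEmptyFibre}. Recall $X=\P^n$ has Fano index $i_X = n+1$, and since $\M_{0,0}(\P^n,d)$ is irreducible of the expected dimension, the dimension count of Remark~\ref{Rem:Bound_RC} applies. For the \emph{lower} bound: the expected dimension of $\M_{0,m}(\P^n,d)$ is $d(n+1)+n+m-3$, and dominance of $\ev_m\colon \M_{0,m}(\P^n,d)\to(\P^n)^m$ forces this to be at least $mn$, i.e. $d(n+1)\ge (m-1)n - m + 3$, hence
\[
d_X(m)\ \ge\ \left\lceil \frac{(m-1)n-m+3}{n+1}\right\rceil\ =\ \left\lceil \frac{(m-1)(n+1)-2(m-2)-1}{n+1}\right\rceil\ =\ m-1-\left\lfloor\frac{2(m-2)+1}{n+1}\right\rfloor .
\]
One then checks the elementary identity $\left\lfloor\frac{2(m-2)+1}{n+1}\right\rfloor=\left\lfloor\frac{2(m-2)}{n+1}\right\rfloor$ for $n\ge 2$: this holds because $2(m-2)+1$ is odd while $n+1\ge 3$, so the extra $+1$ can only push the floor up when $2(m-2)\equiv n \pmod{n+1}$, and a short case analysis (or the observation that $2(m-2)$ and $2(m-2)+1$ lie in the same residue block of length $n+1\ge 3$ unless $n+1\mid 2(m-2)+1$, which would require $n+1$ odd and then still leaves the floors equal) rules this out. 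This gives the left-hand inequality.

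For the \emph{upper} bound I would exhibit a degree $d = m-1-\left\lfloor\frac{m-1}{n+1}\right\rfloor$ at which $\ev_m$ is dominant, by constructing, for general ${\bf x}\in(\P^n)^m$ and general ${\bf t}\in(\P^1)^m$, a degree-$d$ morphism $f\colon\P^1\to\P^n$ with $f({\bf t})={\bf x}$. The idea is to split the $m$ marked points into $d+1$ points handled by interpolation as in Lemma~\ref{Lem:MorPn} (where passing through $d+1$ general points in $\P^n$ via a degree-$d$ map is automatic and gives a positive-dimensional family, namely $U_d$), and the remaining $m'=m-1-d$ points imposed as extra incidence conditions. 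By Remark~\ref{Rmk:PnNonEmptyFibreGen}, condition (2) of Lemma~\ref{Lemma:PnNonEmptyFibre} is satisfied for general points, so the only constraint is condition (1): $d\ge nm' = n(m-1-d)$, i.e. $d(n+1)\ge n(m-1)$, i.e. $d\ge m-1-\frac{m-1}{n+1}$. Taking $d=m-1-\left\lfloor\frac{m-1}{n+1}\right\rfloor$ satisfies this, and then Lemma~\ref{Lemma:PnNonEmptyFibre} guarantees the fibre $\Psi_m^{-1}({\bf t},{\bf x})$ is non-empty, so $\ev_m$ is dominant and $d_X(m)\le d$.

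The main obstacle I anticipate is bookkeeping: matching the two floor/ceiling expressions with the raw inequalities $d(n+1)\ge (m-1)n-m+3$ and $d(n+1)\ge n(m-1)$ requires a couple of integer-arithmetic identities (in particular the reduction $\left\lceil\frac{(m-1)n-m+3}{n+1}\right\rceil = m-1-\left\lfloor\frac{2(m-2)}{n+1}\right\rfloor$ and $\left\lceil\frac{n(m-1)}{n+1}\right\rceil = m-1-\left\lfloor\frac{m-1}{n+1}\right\rfloor$), and one must double-check small cases (e.g. $m\le n+1$, where both bounds should collapse to near $m-1$) to be sure the floors behave as claimed. The geometric content is entirely supplied by Lemma~\ref{Lemma:PnNonEmptyFibre} and Remark~\ref{Rmk:PnNonEmptyFibreGen} for the upper bound and by the expected-dimension count (valid since $\P^n$ is convex) for the lower bound; no new ideas beyond careful arithmetic are needed.
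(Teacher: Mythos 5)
Your route is the same as the paper's: the paper's entire proof is ``direct consequence of Remark~\ref{Rem:Bound_RC} and Remark~\ref{Rmk:PnNonEmptyFibreGen}'', i.e.\ the expected-dimension count for the lower bound and Lemma~\ref{Lemma:PnNonEmptyFibre} (with condition (2) supplied by generality) for the upper bound, exactly as you propose; your upper-bound bookkeeping ($d\ge n(m-1-d)\iff d\ge m-1-\lfloor\tfrac{m-1}{n+1}\rfloor$, with $d\ge n$ automatic once $m'\ge 1$) is correct. However, your lower-bound arithmetic contains two compensating errors: the numerator identity should be $(m-1)n-m+3=(m-1)(n+1)-2(m-2)$, not $(m-1)(n+1)-2(m-2)-1$, and consequently $\bigl\lceil\tfrac{(m-1)n-m+3}{n+1}\bigr\rceil=m-1-\bigl\lfloor\tfrac{2(m-2)}{n+1}\bigr\rfloor$ follows in one line with no further identity needed. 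The floor identity $\bigl\lfloor\tfrac{2(m-2)+1}{n+1}\bigr\rfloor=\bigl\lfloor\tfrac{2(m-2)}{n+1}\bigr\rfloor$ that you invoke to patch the discrepancy is in fact \emph{false} in general (take $n=2$, $m=3$: the left side is $1$, the right side is $0$), so that step of your argument as written is invalid even though the final bound it ``yields'' is the correct one. Replace that passage with the correct one-line algebra and the proof is exactly the paper's.
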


\begin{proof}
This is a direct consequence of Remark~\ref{Rem:Bound_RC} and Remark~\ref{Rmk:PnNonEmptyFibreGen}.
\end{proof}

\begin{proof}[Proof of Proposition~\ref{Prop:PnSRSC}]
We need to combine Lemma~\ref{Lem:MorPn} with Corollary~\ref{Cor:SuffRSC}. 
\\In the case $X=\P^n$, we have $\beta_H=l$, where $l$ is a line. Moreover, 
we know that all moduli spaces $\M_{0,0}(\P^n,d\cdot l)$ are irreducible (cf. \cite[Section~4]{FP_1997}). Lemma~\ref{Lem:MorPn} guarantees that $m_d=d+1$ verifies the hypothesis of Corollary~\ref{Cor:SuffRSC}.
More precisely, we consider the following diagram:
\begin{equation}\label{RCfibrePn}
\xymatrix{
\overline{M}_{0,d+1}(X,d) \ar[d]^-{g} \ar[r]^-{\ev_{d+1}} & X^{d+1} \\
\overline{M}_{0,d+1}}.
\end{equation}
Lemma~\ref{Lem:MorPn} implies that the general fibre $M_{\bf x}:=\ev_{d+1}^{-1}({\bf x})$ is dominant on $\overline{M}_{0,d+1}$  and that $g_{|M_{\bf x}}$ is birational to a $\P^d$-fibration over $\overline{M}_{0,d+1}$.
Since the base of $g_{|M_{\bf x}}$ is rational (cf. \cite{K_1993}), we deduce the unirationality of $M_{\bf x}:=\ev_{d+1}^{-1}({\bf x})$.

Since $m_d$ is strictly increasing, we conclude by Corollary~\ref{Cor:SuffRSC} that $\P^n$ is canonically strongly rationally simply connected.
\end{proof}

\subsection{The quadric hypersurface}

Also for this example, we obtain rational simple connectedness in the strongest possible sense. Nonetheless, the proof requires more care, since we need to distinguish two cases, depending on some parity condition.

\begin{proposition}\label{Prop:QnSRSC}
The quadric hypersurface $\Q_n \subset \P^{n+1}$, with $n\ge 2$, is canonically strongly rationally simply connected.
\end{proposition}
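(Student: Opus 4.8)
The plan is to mirror the structure of the proof for $\P^n$ given above, namely: verify that the moduli spaces $\M_{0,0}(\Q_n,d)$ are irreducible (which is already recalled, citing \cite{T_1998}, \cite{KP_2001}), then produce for each large $d$ an integer $m_d$, strictly increasing in $d$, such that the fibres of $\ev_{m_d}\colon \M_{0,m_d}^{\bir}(\Q_n,d)\to \Q_n^{m_d}$ (over a general point, and also after fixing the marked points on $\P^1$) are non-empty and unirational, and finally invoke Corollary~\ref{Cor:SuffRSC}. The essential computation is the analogue of Lemma~\ref{Lem:MorPn}: given $m_d$ distinct points $t_1,\dots,t_{m_d}\in\P^1$ and general points $x_1,\dots,x_{m_d}\in\Q_n$, describe the variety $\Mor^{{\bf t}\to{\bf x}}_d(\P^1,\Q_n)$ of degree $d$ maps $f\colon\P^1\to\Q_n$ with $f(t_i)=x_i$.

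The concrete approach I would take: write $\Q_n=\{q=0\}\subset\P^{n+1}$ for a fixed nondegenerate quadratic form $q$, and represent a degree $d$ map to $\P^{n+1}$ by $f=[P^0:\dots:P^{n+1}]$ with $P^j\in\C[u,v]_d$; the constraint is that $q(P^0,\dots,P^{n+1})\in\C[u,v]_{2d}$ vanishes identically, i.e. $2d+1$ quadratic equations in the $(n+2)(d+1)$ coefficients. Interpolating at the $m_d$ points via the Lagrange basis $L_i$ as in \eqref{PzMor} fixes a linear subspace on which $q\circ f$ must vanish. This is where the parity distinction announced in the statement enters: the expected dimension count $d\cdot i_{\Q_n}+n+m-3$ uses $i_{\Q_n}=n$, so one should take $m_d$ roughly so that the interpolation conditions $n\cdot m_d$ balance the $2d+1$ quadratic conditions against the $(n+2)(d+1)$ parameters — concretely $m_d \approx d+1$ when things work out, but the precise value and the nature of the residual variety differ according to whether some relevant quantity (likely $n$, or the relation between $d$ and $n$) is even or odd. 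I would set up the two cases, exhibit an explicit rational parametrization of $\Mor^{{\bf t}\to{\bf x}}_d(\P^1,\Q_n)$ in each (as a bundle of quadric complete intersections over a rational base, or by reducing to the $\P^n$ case via a projection from a point of $\Q_n$ and the birational structure of a quadric), and conclude rationality/unirationality of the fibre; compare with Remark~\ref{Rmk:PnNonEmptyFibreGen} for the genericity of the points.

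The diagram to run at the end is exactly \eqref{RCfibrePn} with $X=\Q_n$: show $\ev_{m_d}^{-1}({\bf x})$ dominates $\M_{0,m_d}$ with general fibre a rational (or unirational) variety, use rationality of $\M_{0,m_d}$ (\cite{K_1993}, or \cite{K_1996}-type results) to get $\ev_{m_d}^{-1}({\bf x})$ unirational, hence rationally connected, then — since $m_d$ is strictly increasing — Corollary~\ref{Cor:SuffRSC} gives canonical strong rational simple connectedness. I would also record here the bound on $d_{\Q_n}(m)$ analogous to Corollary~\ref{Cor:BoundPn}, obtained from Remark~\ref{Rem:Bound_RC} together with the explicit fibre computation, since it will be needed in Section~\ref{Sec:V5_1}.

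I expect the main obstacle to be the fibre computation itself, i.e. showing that after imposing the interpolation conditions the residual variety cut out by the quadratic equations $q\circ f\equiv 0$ is actually rational (not merely nonempty of the expected dimension), and handling the degenerate sub-loci where the Lagrange parametrization forces some coordinate to vanish — the analogue of the rank condition (2) in Lemma~\ref{Lemma:PnNonEmptyFibre}. The even/odd case split is presumably forced precisely because in one parity the quadratic system has an extra symmetry or an unavoidable base locus that changes the combinatorics of the count, and making that explicit while keeping the parametrization rational is the delicate part; a projection-from-a-point trick identifying an open subset of $\Q_n$ birationally with $\P^n$ and transporting the $\P^n$-result of Lemma~\ref{Lem:MorPn} is the most promising way to sidestep a brute-force analysis of the quadratic equations.
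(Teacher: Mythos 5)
Your outer framework is the same as the paper's (irreducibility from \cite{KP_2001}, the choice $m_d=d+1$, rationality of the fibre of $\ev_{m_d}$ fibred over $\M_{0,m_d}$, then Corollary~\ref{Cor:SuffRSC}), but the heart of the argument --- the actual description of $\Mor^{{\bf t}\to{\bf x}}_d(\P^1,\Q_n)$ --- is missing, and the substitute you propose would not deliver it. The key observation in the paper (Lemma~\ref{Lem:MorQn}) is that once $f(t_i)=x_i\in\Q_n$ is imposed at $m=d+1$ points via the Lagrange parametrization \eqref{PzMor}, the polynomial $Q(z)=q(P(z))$ has degree $2d$ and already vanishes at $d+1$ points, so $Q\equiv 0$ is equivalent to the \emph{double} vanishing $dQ(z_l)=0$ at those points; because $q(v_i)=0$, each such condition factors as $\lambda_l$ times a linear form, and the whole system becomes the single linear condition that the rescaled vector $(\lambda_i/\zeta_i)$ lie in the kernel of the skew-symmetric matrix $A=\bigl(B(v_i,v_j)/(z_j-z_i)\bigr)$ of Definition~\ref{Def:MatA}. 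This linearization is what makes the fibre tractable, and it also explains the case split: the parity in question is that of $d$ alone (not of $n$, nor of a relation between $d$ and $n$), because a skew-symmetric matrix of odd size $d+1$ is automatically degenerate. For $d$ even the kernel is generically a line spanned by the sub-Pfaffian vector and $M_{\bf x}\to\M_{0,m}$ is birational; for $d$ odd the fibre over a \emph{general} $({\bf t},{\bf x})$ is actually empty (Lemma~\ref{Lemma:MorQn}(1)) --- a point your ``non-empty and unirational'' plan does not anticipate --- and one must instead work over the Pfaffian locus, where the fibre is a $\P^1$ admitting the rational section $N_0=((-1)^{j}\pf(A(0,j)))_j$ of Lemma~\ref{Lem:PfComp}. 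Relatedly, dominance of $\ev_{d+1}$ is not automatic and requires the degeneration argument of Corollary~\ref{Cor:BoundQn} (attaching a line and using irreducibility of $\M_{0,0}(\Q_n,d)$) precisely because of the odd-$d$ emptiness.

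Your fallback --- projecting from a point of $\Q_n$ to identify an open set with $\P^n$ and transporting Lemma~\ref{Lem:MorPn} --- does not sidestep this. The projection is birational, but a degree-$d$ curve on $\Q_n$ does not correspond to an \emph{arbitrary} degree-$d$ curve in $\P^n$ through the projected points: the condition that the curve lift back into the quadric with the right degree and tangency behaviour reimposes exactly the quadratic constraints you are trying to avoid, so no dimension is gained. The brute-force count you set up at the start ($2d+1$ quadratic equations versus $n m_d$ linear ones) is also not the right bookkeeping once $m=d+1$ points are placed on $\Q_n$, since then only $d+1$ of the $2d+1$ conditions survive and they degenerate to linear ones. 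Without the skew-symmetric/Pfaffian structure (or an equivalent device), the proposal does not reach a proof.
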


\begin{remark}
The case of $\Q_2=\P^1\times\P^1$ is slightly different with respect to the others, since $\rho(\Q_2)=2$. Nonetheless, we will always fix the diagonal polarisation $\beta =H_1 + H_2$, where the $H_i$'s are the two rulings. With this choice, all the arguments of this section apply also for $n=2$.
\end{remark}

As for $\P^n$ let us recall that, for any $x \in \Q_n \subset \P^{n+1}$, the VMRT is isomorphic to $\Q_{n-2}$, which is then rational, if $n>2$, or two reduced points if $n=2$.

As for the previous case, let $f\colon \P^1 \to \Q_n\subset \P^{n+1}$ be a degree $d$ morphism, with $d\ge 1$, defined by $[P^0:\ldots : P^{n+1}]$, with $P^i\in \C[u,v]_d$ for $i\in [0,n+1]$, fix $m=d+1$ \emph{distinct} points $t_0,\ldots,t_d \in \P^1$ 
and $m$ arbitrary points $x_0, \ldots , x_d \in \Q_n$. Keeping the analogous notation as before, we denote by $\Mor^{{\bf t} \to {\bf x}}_d(\P^1,\Q_n)$ the variety of degree $d$ morphisms $f\colon \P^1 \to \Q_n$ such that  $f(t_i) = x_i$ for all $i \in [0,d]$.

For this case, we need some more notation. Let $V$ be a $\C$-vector space of dimension $n+2$ and $q$ be a non-degenerate quadratic form on $V$ which defines the quadric $\Q_n\subset \P(V)$, i.e. 
$$\Q_n = \{ [v] \in \P(V) \ | \ q(v) = 0\}.$$
Moreover, let $B$ be the non-degenerate bilinear form associated to $q$.

As in the proof of Lemma~\ref{Lem:MorPn}, choose coordinates on $\P^1$ such that $t_i = [z_i:1]$ and choose vectors $v_i\in V$ such that $x_i = [v_i]$ for all $i \in [0,d]$. 

\begin{definition}\label{Def:MatA}
Keep the notation as above. The {\it rescaled skew-symmetric matrix} $A:=A_{\Q_n, {\bf z} \to {\bf v}}$ associated to $(z_0, \ldots, z_d)$ and $(v_0,\ldots, v_d)$ is defined as
$$A_{\Q_n, {\bf z} \to {\bf v}} := \left( \frac{B(v_i,v_j)}{z_j - z_i} \right)_{i,j \in [0,d]}.$$
\end{definition}

The previous definition depends on the choice of coordinates on $\P^1$, but this will not affect our arguments. Let us recall some properties of the Pfaffian of skew-symmetric matrices.

\begin{remark}\label{Rem:pf(A)}
Let $A=\{a_{ij}\}$ be an $(m\times m)$ skew-symmetric matrix. The {\it Pfaffian} $\pf(A)$ of $A$ is defined as
\begin{equation}\label{PfA}
\pf(A):=\begin{cases}
0 \ \ \ \ \text{ if $m$ is odd;}\\ 
 \sum_{\sigma \in \FF_{m}} \sgn(\sigma)\prod_{i=1}^k a_{\sigma(2i-1),\sigma(2i)}     \ \ \ \ \text{ if $m=2k$ is even.}
\end{cases}
\end{equation}
where $\FF_{m}$ is the set of permutations in $\SS_{m}$ satisfying the following:
\begin{itemize}
\item $\sigma(1)<\sigma(3)<\ldots < \sigma(2k-1)$; and
\item $\sigma(2i - 1) < \sigma(2i)$ for $1\le i \le k$.
\end{itemize}
One can check that the set $\FF_{2k}$ is in 1:1 correspondence with partitions of the set  $\{1, 2, \ldots , 2k\}$ into $k$ disjoint subsets with 2 elements:
$$\sigma \in \FF_{2k} \xleftrightarrow{\ 1:1\ } \{(i_1, j_1), \ldots , (i_k, j_k)\}$$
where $i_1<i_2<\ldots < i_k$ and $i_l < j_l$ for all $1 \le l \le k$. In particular,
$$|\FF_{2k}|=\frac{(2k)!}{2^kk!}=(2k-1)!!$$

So, $\pf(A)$ is a degree $k$ polynomial in the entries of $A$ and is linear in its lines and columns. Moreover the following formal properties of $\pf(A)$ hold:
\begin{itemize}
\item $\pf(A)^2=\det(A)$;
\item $\pf(BAB^{t})=\det(B)\pf(A)$ for any $(m\times m)$-matrix $B$.
\end{itemize}
Moreover a change of coordinates on $\P^1$ will change the Pfaffian 
by a non-zero scalar and the vanishing of $\pf(A)$ will not depend on the choice of coordinates. For further details on pfaffians, see \cite[Section~5.7]{N_1984}.
\end{remark}
We can prove an analogue of Proposition~\ref{Prop:PnSRSC} for quadric hypersurfaces. 
\\Let us recall some known facts.
In our notation, given a matrix $A = (a_{i,j})$, the $i$-th row (resp.~ the $j$-th column) is denoted by $A_i$ (resp.~ by $A^j$).

\begin{lemma} \label{Lem:PfComp}
Let $A = (a_{i,j})$ be an ($m\times m$)-skew-symmetric matrix, with indices $i,j \in \{0,1,\ldots, d\}$. Let $A(i)$ (resp.~ $A(i,j)$) denote the skew-symmetric matrix obtained from $A$ by removing the $i$-th row and column (resp.~ the $i$-th and the $j$-th rows and columns).
\begin{enumerate}
\item if $m$ is odd and $A$ has rank $m-1$, then $\Ker(A)$ is spanned by the vector $((-1)^1\pf(A(0)),\cdots,(-1)^{d+1}\pf(A(d)))^t$ .
\item If $m=2k$ is even and $A$ has rank $m-2$, then $\Ker(A)$ is spanned by the $m$ vectors
$$N_i = ((-1)^{i+j}\pf(A(i,j)))_j;$$
\end{enumerate}
\end{lemma}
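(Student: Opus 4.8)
The plan is to prove both statements by a direct computation with Pfaffian expansions, exploiting the fact that for a skew-symmetric matrix $A$ the adjugate (in a suitable sense) is governed by sub-Pfaffians rather than by minors.

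First I would treat (1). Let $A$ be $m\times m$ skew-symmetric with $m$ odd and $\rk(A) = m-1$, so $\dim\Ker(A) = 1$; it suffices to exhibit one nonzero vector $w$ with $Aw = 0$. Take $w = ((-1)^{i+1}\pf(A(i)))_{i\in[0,d]}$, where $d = m-1$. For each row index $l$, the entry $(Aw)_l = \sum_{i} a_{l,i}(-1)^{i+1}\pf(A(i))$; I would recognize this (up to an overall sign depending on $l$) as a Laplace-type expansion of the Pfaffian of the $(m+1)\times(m+1)$ skew-symmetric matrix obtained from $A$ by repeating the $l$-th row and column. Since that enlarged matrix has two equal rows, its Pfaffian vanishes, giving $(Aw)_l = 0$. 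The remaining point is that $w\neq 0$: because $\rk(A) = m-1$, some $(m-1)\times(m-1)$ principal submatrix $A(i)$ is nonsingular, hence $\pf(A(i))\neq 0$ for that $i$. (One must be a little careful: $\rk(A) = m-1$ does not a priori force a \emph{principal} submatrix of size $m-1$ to be invertible, but for skew-symmetric $A$ the rank is always even and every maximal nonvanishing minor can be taken principal — this is a standard fact I would cite or quickly justify via the normal form of alternating forms.)

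Next, for (2), with $m = 2k$ even and $\rk(A) = m-2$, so $\dim\Ker(A) = 2$: I would apply part (1) to each deleted matrix. Fix an index $i$; the matrix $A(i)$ is $(m-1)\times(m-1)$, odd-sized, and generically of rank $m-2$, so by (1) its kernel is spanned by the vector with coordinates $\pm\pf(A(i,j))$ over $j\neq i$; promoting this to a length-$m$ vector $N_i$ by inserting a zero in slot $i$ gives a vector with $AN_i$ supported only in coordinate $i$. Then a separate identity — again a Pfaffian expansion, namely the expansion of $\pf(A)=0$ along row $i$ — shows the $i$-th coordinate of $AN_i$ also vanishes, so $N_i\in\Ker(A)$. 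Finally, since $\rk(A) = m-2$ there exist two indices $i$ for which the relevant sub-Pfaffians don't all vanish, and the corresponding $N_i$ are linearly independent (their supports/leading nonzero entries differ), so they span the $2$-dimensional kernel; together with the $N_i$ for the remaining indices lying in this same span, the claim follows.

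The main obstacle I expect is bookkeeping the signs and, more substantively, verifying the two key Pfaffian identities — "expansion of the Pfaffian of a matrix with a repeated row vanishes" and "row-expansion of $\pf(A)$" — with the precise sign conventions fixed in Remark~\ref{Rem:pf(A)}; these are classical (they appear in \cite[Section~5.7]{N_1984}) but require care to line up with the chosen indexing. A secondary subtlety, as noted above, is justifying that the stated rank hypotheses guarantee the exhibited vectors are nonzero and, in case (2), span a $2$-dimensional space rather than collapsing; this uses that a skew-symmetric matrix of rank $r$ has an $r\times r$ nonsingular \emph{principal} submatrix, which I would invoke from the standard structure theory of alternating bilinear forms.
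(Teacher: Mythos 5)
Your proposal is correct and follows essentially the same route as the paper: part (1) via the vanishing Pfaffian of the $(m+1)\times(m+1)$ skew-symmetric matrix obtained by duplicating a row and column of $A$ (the paper's matrix $A[i]$), and part (2) by applying part (1) to the odd-sized submatrices $A(i)$ together with the row expansion of $\pf(A)=0$, then extracting two independent $N_i$'s from a nonvanishing $\pf(A(i,j))$. The extra care you flag about principal submatrices realizing the rank of a skew-symmetric matrix is exactly the point the paper leaves implicit, so there is no gap.
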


\begin{proof}
We prove (1): let us define $v_A := ((-1)^1\pf(A(0)),\cdots,(-1)^{d+1}\pf(A(d)))$. Let $A[i]$ denote the following matrix:
$$A[i]:=
\begin{pmatrix}
A & A^i\\
A_i & 0
\end{pmatrix},
$$
which is skew-symmetric of dimension $(m+1)\times (m+1)$, and by construction $\pf(A[i])=0.$
Using the formal properties of the Pfaffian (see \cite[Lemma~2.3]{IW_1999}), we obtain:
$$0 = \pf(A[i]) = \sum_{j = 0}^d (-1)^{i+j} a_{i,j} \pf(A(j)) = (-1)^{i-1} A_i \cdot v_A^t.$$
In particular $v_A^t$ is in the kernel of $A$ and since $A$ has co-rank one, at least one of the entries of $v_A^t$ is non-zero and so $v_A$ generates the kernel.

To prove part (2), we apply part (1) to deduce that all vectors $N_i$ are in the kernel of $A$ (recall that $\pf(A(i,i)) = 0$). Furthermore, the vectors $N_i$ and $N_j$ are linearly independent as soon as $\pf(A(i,j)) \neq 0$. But the condition on the rank
implies that at least one of these Pfaffians is non-zero and that the $N_i$'s span the kernel of $A$.
\end{proof}

\begin{remark}
The same methods can be used to deduce similar results on the kernel of skew-symmetric matrices for higher co-ranks.
\end{remark}

Applying the formal properties of pfaffians to our setting, we deduce the following lemma.

\begin{lemma}\label{Lem:maxRk}
Keep the notation of Definition~\ref{Def:MatA}. Let $u, v \in V$ be vectors such that $q(u)=q(v)=0$ and $B(u,v) = 1$. Let ${\bf v}=(v_0,\ldots, v_d)$ be defined via
$$v_i=
\begin{cases}
u \text{ for } i \text{ even;}\\
v \text{ for } i \text{ odd}
\end{cases}
$$
with $i \in [0,d]$. Let $\U_m\subset \M_{0,m}$ the open subset of pairwise distinct points. Then for any ${\bf t} \in \U_m$ (with coordinates ${\bf z}$) the matrix $A=A_{\Q_n, {\bf z} \to {\bf v}}$ has maximal rank.
\end{lemma}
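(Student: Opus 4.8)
The plan is to compute the Pfaffian of $A = A_{\Q_n, {\bf z} \to {\bf v}}$ explicitly for this very special choice of ${\bf v}$ and show it does not vanish; together with an analogous computation for the codimension-one minors this will give maximal rank (which is $m$ if $m$ is even and $m-1$ if $m$ is odd, since $A$ is skew-symmetric). First I would record that with $v_i = u$ for $i$ even and $v_i = v$ for $i$ odd, and $B(u,v) = 1$, $B(u,u) = B(v,v) = 0$, the entries become
\[
a_{ij} = \frac{B(v_i,v_j)}{z_j - z_i} = \begin{cases} 0 & i \equiv j \pmod 2, \\ \dfrac{\pm 1}{z_j - z_i} & i \not\equiv j \pmod 2, \end{cases}
\]
so $A$ is (up to signs built into the definition) a Cauchy-type matrix supported on the ``odd/even'' off-diagonal blocks.

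Next I would treat the case $m = 2k$ even. Here $\pf(A)$ is a sum over perfect matchings of $\{0,\dots,2k-1\}$, but because $a_{ij} = 0$ whenever $i \equiv j \pmod 2$, only matchings pairing each even index with an odd index survive. Such matchings are indexed by bijections between the $k$ even indices and the $k$ odd indices, i.e. by the symmetric group $S_k$, and the corresponding product telescopes into a Cauchy determinant. Concretely I expect
\[
\pf(A) = \pm \det\!\left( \frac{1}{z_{2j} - z_{2i-1}} \right)_{i,j} = \pm \frac{\prod_{i<i'}(z_{2i} - z_{2i'})\prod_{j<j'}(z_{2j-1} - z_{2j'-1})}{\prod_{i,j}(z_{2i} - z_{2j-1})},
\]
which is a nonzero rational function of the pairwise-distinct $z$'s; hence $\pf(A) \neq 0$ and $A$ has maximal rank $m$. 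For $m = 2k+1$ odd, $\pf(A) = 0$ automatically, so I must instead show some principal submatrix $A(i)$ of size $m-1$ has nonzero Pfaffian. Removing one even-indexed row/column (there is always one, since $m$ is odd there are $k+1$ even indices and $k$ odd ones, say) leaves a $2k \times 2k$ matrix still of the same block shape with equal numbers of even and odd surviving indices, so the even case applies verbatim and gives $\pf(A(i)) \neq 0$; thus $\rk A \geq m-1$, which is maximal for skew-symmetric $A$ of odd size.

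The main obstacle I anticipate is the bookkeeping: getting the signs right when reducing $\pf(A)$ to a Cauchy determinant (the sign of a matching-permutation versus the sign in the determinant expansion, plus the sign conventions hidden in $a_{ij} = B(v_i,v_j)/(z_j - z_i)$ versus $a_{ji}$), and making sure the reduction to matchings that respect the even/odd partition is airtight — this is exactly the kind of place where a spurious factor could make things cancel if done carelessly. The actual non-vanishing, once the Cauchy-determinant identity is in hand, is immediate from distinctness of the $z_i$. I would also remark that since changing coordinates on $\P^1$ only multiplies $\pf(A)$ by a nonzero scalar (cf.\ Remark~\ref{Rem:pf(A)}), the conclusion is independent of the chosen coordinates, as already noted, so it suffices to carry out the computation in one convenient chart.
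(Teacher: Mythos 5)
Your proof is correct, and it diverges from the paper's at exactly one point: how the surviving sum over matchings is evaluated. Both arguments begin identically --- since $B(u,u)=B(v,v)=0$ and $B(u,v)=1$, only matchings pairing an even index with an odd index contribute to $\pf(A)$. The paper then evaluates the resulting sum $\sum_{\sigma}\prod_{l}1/(z_{j_l}-z_{i_l})$ by a polynomial-identity argument: it puts everything over the common denominator $\prod_{i<j,\ i+j\ \mathrm{odd}}(z_j-z_i)$ of degree $k^2$, observes that the numerator has degree at most $k^2-k$ and vanishes on each hyperplane $z_i=z_j$ with $i+j$ even, and fixes the remaining constant by a residue computation at $z_0=z_1$. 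You instead sort the even indices before the odd ones to put $A$ in block anti-diagonal form, so that $\pf(A)=\pm\det\bigl(1/(z_{e}-z_{o})\bigr)$, and invoke the classical Cauchy determinant formula; the sign ambiguity you worry about is genuinely harmless for non-vanishing, and the identity $\pf\left(\begin{smallmatrix}0 & M\\ -M^{t} & 0\end{smallmatrix}\right)=\pm\det(M)$ together with $\pf(PAP^{t})=\det(P)\pf(A)$ makes the reduction airtight. The two routes yield the same closed formula and the same conclusion; yours outsources the combinatorics to two standard identities, while the paper's is self-contained. You are in fact more explicit than the paper on the odd-size case: the paper's proof only records $\pf(A)=0$ when $m$ is odd, leaving the passage to the principal submatrices $A(i)$ implicit (it surfaces later, in the proof of Lemma~\ref{Lemma:MorQn}), whereas you carry out that reduction --- deleting one even index leaves equally many evens and odds, so some $\pf(A(i))\neq 0$ and the rank is $m-1$ --- which is what ``maximal rank'' actually requires there.
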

\begin{proof}
 The associated matrix $A$ has the form
$$A = \left( \frac{\delta_{i + j \equiv 1}}{z_j - z_i} \right),$$
where $\delta_{i + j \equiv 1}$ equals $1$ if $i + j$ is odd and $0$ otherwise. For $d$ even, we have $\pf(A) = 0$ while for $d = 2k - 1$ odd, we prove  the following formula:
$$\pf(A) =\frac{\prod_{i < j,\ i + j \even}(z_i - z_j)}{\prod_{i < j,\ i + j \odd}(z_i - z_j)}.$$
Indeed, keeping the notation as in Remark~\ref{Rem:pf(A)}, let $\FF_{2k}^{\odd}\subset \FF_{2k}$ be the subset of partitions $\sigma=\{(i_1, j_1), \ldots , (i_k, j_k)\}$ such that $i_l + j_l$ is odd for all $l \in[1, k]$. 
Using formula \eqref{PfA}, one has:
\begin{equation}\label{pf_lemma}
\pf(A) = \sum_{\sigma \in \FF_{2k}^{\odd}}\prod_{l = 1}^k \frac{1}{z_{j_l} - z_{i_l}}.
\end{equation}
Factorising this expression, we deduce:
$$\pf(A) = \frac{P(z_0,z_1,\ldots,z_d)}{\prod_{i < j,\ i + j \odd}(z_j - z_i)}.$$
The denominator has degree $k^2$ and the polynomial $P(z_0,z_1,\ldots,z_d)$ has degree at most $k^2-k$. If we evaluate $P$ in $z_i = z_j$ for $i<j$ and  $i + j $ even, the numerator vanishes, so there exists a constant $C$ for which $P(z_0,z_1,\ldots,z_d)=C\prod_{i < j,\ i + j \even}(z_j - z_i)$. One determines the value of $C=1$ looking at the residue at $(z_0-z_1)$.
\end{proof}

The following lemma is the quadratic version of Lemma~\ref{Lem:MorPn}.

\begin{lemma}\label{Lem:MorQn}
Keep the notation as above. Then the variety $\Mor^{{\bf t} \to {\bf x}}_d(\P^1,\Q_n)$, with for $d\ge 2$, is either empty or rational and isomorphic to the open subset of $\P^d$ defined by
\begin{equation}\label{MorQn}
U_{d,A} = \{ [\lambda_0 : \ldots : \lambda_d] \ | \ \lambda_i \neq 0 \mbox{ for all } i \in [0,d] \text{ and } (\lambda_0,\ldots,\lambda_d) \in \Ker(A) \}.
\end{equation}
\end{lemma}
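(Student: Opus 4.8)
The plan is to imitate the proof of Lemma~\ref{Lem:MorPn} and keep track of the extra quadratic constraint coming from the equation $q=0$ of $\Q_n\subset\P(V)$. As in the linear case, since $d\ge 2$ we have $m=d+1\ge 3$, so the group $\Aut(\P^1,{\bf t})$ is trivial and it is enough to describe morphisms $f\colon\P^1\to\P(V)$ with $f(t_i)=x_i$ whose image lies on $\Q_n$. Choosing coordinates with $t_i=[z_i:1]$ and representatives $v_i\in V$ with $x_i=[v_i]$, exactly as in \eqref{PzMor} any such $f$ (ignoring for the moment the quadric) is given by
$$P(z)=\sum_{i=0}^d\lambda_i L_i(z)v_i,\qquad L_i(z)=\prod_{j\ne i}\frac{z-z_j}{z_i-z_j},$$
with all $\lambda_i\ne 0$; so without the quadratic condition the parameter space is the open subset $U_d\subset\P^d$ of Lemma~\ref{Lem:MorPn}. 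First I would substitute this expression into $q(P(z))=B(P(z),P(z))$ and analyse when this polynomial (a priori of degree $2d$ in $z$) vanishes identically.

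The key computation is that $q(P(z))$ vanishes identically if and only if $(\lambda_0,\dots,\lambda_d)\in\Ker(A)$, where $A=A_{\Q_n,{\bf z}\to{\bf v}}$ is the rescaled skew-symmetric matrix of Definition~\ref{Def:MatA}. Here is why one expects this: evaluating at $z=z_k$ gives $P(z_k)=\lambda_k v_k$, hence $q(P(z_k))=\lambda_k^2\,q(v_k)=0$ automatically because $x_k\in\Q_n$; so the degree-$2d$ polynomial $q(P(z))$ already vanishes at the $d+1$ points $z_0,\dots,z_d$, and it remains to impose the vanishing of the remaining $d$ coefficients. Writing $q(P(z))=\sum_{i,j}\lambda_i\lambda_j L_i(z)L_j(z)B(v_i,v_j)$, the diagonal terms contribute $\sum_i \lambda_i^2 L_i(z)^2 q(v_i)=0$, so only the off-diagonal terms $2\sum_{i<j}\lambda_i\lambda_j L_i(z)L_j(z)B(v_i,v_j)$ survive. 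A partial-fraction / residue analysis of $L_i(z)L_j(z)$ along the lines of the computations in Lemma~\ref{Lem:maxRk} shows that the condition ``$q(P(z))\equiv 0$'' is equivalent to the system $\sum_{j}\frac{B(v_i,v_j)}{z_j-z_i}\lambda_j=0$ for all $i$, i.e. $A\cdot(\lambda_0,\dots,\lambda_d)^t=0$. Since each $\lambda_i$ must in addition be nonzero, the morphism space is identified with $U_{d,A}$ as in \eqref{MorQn}, which is an open subset of the linear subspace $\P(\Ker A)\subset\P^d$ and hence either empty or rational.

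The main obstacle is the middle step: extracting from $q(P(z))\equiv 0$ precisely the linear system defined by $A$, rather than some less transparent set of conditions. The cleanest route is to note that $q(P(z))$ is a rational function with at worst double poles at the $z_j$ killed by the factor $\prod_j(z-z_j)^2$ in the denominators of the $L_i$, and that after clearing denominators one gets a polynomial of degree $2d$ with the $d+1$ roots $z_0,\dots,z_d$ already accounted for; computing the residue of $q(P(z))$ at a simple pole $z=z_i$ (which picks out $2\lambda_i\sum_{j\ne i}\lambda_j \frac{B(v_i,v_j)}{z_i-z_j}\cdot(\text{nonzero factor})$) yields, since $\lambda_i\ne 0$, exactly the $i$-th equation $A_i\cdot\boldsymbol\lambda=0$; and conversely these $d+1$ residue conditions (one of which is dependent, as $A$ is skew-symmetric of odd or even size) together with the automatic vanishing at $z_0,\dots,z_d$ force $q(P(z))\equiv 0$ by a degree count. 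I would also remark, as in Remark~\ref{Rem:pf(A)}, that replacing the coordinates on $\P^1$ rescales $A$ by conjugation with an invertible matrix, hence does not change $\Ker(A)$ up to the relevant identification, so the description is coordinate-independent; and I would record that when $\Ker(A)$ meets the open torus $\{\lambda_i\ne 0\ \forall i\}$ the space is genuinely nonempty, the case needed in the sequel.
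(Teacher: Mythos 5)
Your proposal follows essentially the same route as the paper's proof: substitute the Lagrange parametrisation of Lemma~\ref{Lem:MorPn} into $q$, observe that $Q(z)=q(P(z))$ has degree $2d$ and vanishes automatically at the $d+1$ points $z_i$ because $q(v_i)=0$, and then show that second-order vanishing at those points (your residue conditions; the paper computes $dQ(z_l)$ directly) is the linear system attached to $A$ and forces $Q\equiv 0$ by the count $2(d+1)>2d$. One imprecision to fix: in the residue at $z=z_i$ the factor $1/\zeta_j$, with $\zeta_j=\prod_{k\neq j}(z_j-z_k)$, sits \emph{inside} the sum over $j$ and cannot be pulled out as a single ``nonzero factor'', so the system you actually get is $A\cdot(\lambda_0/\zeta_0,\dots,\lambda_d/\zeta_d)^t=0$ rather than $A\cdot(\lambda_0,\dots,\lambda_d)^t=0$ --- this is exactly what the paper records, and since the discrepancy is a diagonal automorphism of $\P^d$ preserving the locus $\lambda_i\neq 0$, the identification with $U_{d,A}$ up to isomorphism (and hence the conclusion) is unaffected. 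Likewise the phrase about $\prod_j(z-z_j)^2$ appearing ``in the denominators of the $L_i$'' is garbled (the $L_i$ are polynomials); the correct formulation of your residue argument is $Q(z)=\pi(z)^2R(z)$ with $\pi(z)=\prod_j(z-z_j)$ and $R$ a rational function with only simple poles at the $z_j$ and vanishing at infinity, so that $R\equiv 0$ if and only if all its residues vanish.
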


\begin{proof}
Since $\Q_n$ is contained in the projective space $\P^{n+1}$ we can apply Lemma~\ref{Lem:MorPn} to write down the parametrisation of morphisms in $f\colon \P^1 \to \P^{n+1}$ as in \eqref{PzMor}.
\\We only need to impose the condition that $f$ factors through $\Q_n$. Let us define
\begin{equation}\label{QzMor}
Q(z) := q(P(z)) = B(P(z),P(z)).
\end{equation}
By assumption, we have that $F(z_i)=0$ for all $i$, so imposing that $f$ factors through $\Q_n$ is equivalent to requiring that $Q$ vanishes with multiplicity at least $2$ at the $z_i$'s.
Computing the differential $dQ(z) = 2 B(dP(z),P(z))$ we have
$$dQ(z) = 2 \sum_{i,j = 0}^d dL_i(z) L_j(z) \lambda_i\lambda_j B(v_i,v_j) = 2 \sum_{i,j = 0, \ i \neq j}^d dL_i(z) L_j(z) \lambda_i\lambda_jB(v_i,v_j),$$
where the last equality holds because $B(v_i,v_i) = 0$. 
An easy computation shows the following formula, for any $l\neq i$:
$$dL_i(z_l) = \frac{1}{z_l - z_i} \frac{\zeta_l}{\zeta_i},$$
where $\zeta_i = \prod_{k \neq i}(z_i - z_k)$. So evaluating the differential $dQ(z)$ in $z_l$'s we obtain:
$$dQ(z_l) =  2 \sum_{i = 0, \ i \neq l}^d dL_i(z_l) \lambda_l\lambda_i B(v_i,v_l) $$ $$ = 2 \lambda_l\zeta_l \sum_{i = 0, \ i \neq l}^d \frac{\lambda_i}{\zeta_i} \frac{B(v_i,v_l)}{z_l - z_i} 
=  2 \lambda_l \zeta_l A_l \cdot \left(\frac{\lambda_0}{\zeta_0},\cdots,\frac{\lambda_d}{\zeta_d}\right)^t,$$
where $A_l$ is the $l$-th row of $A$. So $dQ$ vanishes in the $z_l$'s if and only if the vector $(\lambda_0/\zeta_0,\cdots,\lambda_d/\zeta_d)$ is in $U_{d,A}$. 
\end{proof}

Our final aim in this section is to prove rational connectedness of the general fibre of some evaluation morphisms for $\Q_n$: for this, it is enough to look at the spaces $\Mor^{{\bf t} \to {\bf x}}_d(\P^1,\Q_n)$ for general ${\bf z}$ and ${\bf x}$ and for sufficiently large $d$. Nonetheless, in Section~\ref{Sec:V5_1} we will need some more punctual information about {\it special} fibres of evaluation maps for $\Q_n$. 

Assume that $m=d+1$ and consider the evaluation map 
$$\ev_m\colon \M_m^{\bir}(\Q_n,d) \to (\Q_n)^m,$$ 
as in Notation~\ref{Not:GenFibre}. The following lemma describes the general fibres of $\Psi_m$.

\begin{lemma}\label{Lemma:MorQn}
Keep the notation as above. 
\begin{enumerate}
\item If $d$ is odd and ${\bf t}$ and ${\bf x}$ are general, $\Mor^{{\bf t} \to {\bf x}}_d(\P^1,\Q_n)$ is empty;
\item if $d$ is even and ${\bf t}$ and ${\bf x}$ are general, $\Mor^{{\bf t} \to {\bf x}}_d(\P^1,\Q_n)$ is a point.
\end{enumerate}
\end{lemma}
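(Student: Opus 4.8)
The plan is to combine Lemma~\ref{Lem:MorQn} with the rank analysis of the rescaled skew-symmetric matrix $A = A_{\Q_n,{\bf z}\to{\bf v}}$. By Lemma~\ref{Lem:MorQn}, for $d\ge 2$ the space $\Mor^{{\bf t}\to{\bf x}}_d(\P^1,\Q_n)$ is (empty or) the locus $U_{d,A}\subset\P^d$ of $[\lambda_0:\cdots:\lambda_d]\in\Ker(A)$ with all $\lambda_i\ne 0$. Since $A$ is an $(m\times m)$ skew-symmetric matrix with $m=d+1$, its behaviour is governed by parity: when $d$ is even, $m$ is odd, $\pf(A)=0$ and $A$ is \emph{generically} of corank exactly $1$; when $d$ is odd, $m=2k$ is even, and $A$ is \emph{generically} of full rank $m$, so $\Ker(A)=0$ and $U_{d,A}=\varnothing$. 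The content of the lemma is therefore to show that the relevant corank is attained on the general pair $({\bf t},{\bf x})$, and then to check in the even case that the spanning kernel vector has no zero coordinate.

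For part (1), $d$ odd: here I would invoke Lemma~\ref{Lem:maxRk} directly. That lemma exhibits a \emph{specific} choice of ${\bf v}$ (alternating between two isotropic vectors $u,v$ with $B(u,v)=1$) for which, when $d$ is odd, $\pf(A)\ne 0$ for every ${\bf t}$ with distinct coordinates — the explicit product formula in its proof shows $\pf(A)$ is a nonzero rational function of ${\bf z}$. Hence $A$ has full rank $m$ for this $({\bf t},{\bf v})$, so the same holds on a dense open subset of $\U_m\times(\Q_n)^m$; for such general $({\bf t},{\bf x})$ one has $\Ker(A)=0$ and $\Mor^{{\bf t}\to{\bf x}}_d(\P^1,\Q_n)=U_{d,A}=\varnothing$.

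For part (2), $d$ even: now $m$ is odd, so $\pf(A)\equiv 0$ and $\rk(A)\le m-1$. I would again use the alternating test configuration from Lemma~\ref{Lem:maxRk}: deleting the last row and column of $A$ gives a matrix of the same alternating shape with $d$ now odd, whose Pfaffian is the nonzero rational function computed there, so $A$ has a nonvanishing $(m-1)\times(m-1)$ minor, forcing $\rk(A)=m-1$ exactly. This holds for general $({\bf t},{\bf x})$. Then $\Ker(A)$ is a line, spanned by the vector of signed Pfaffians $v_A=((-1)^{i+1}\pf(A(i)))_{i\in[0,d]}$ by Lemma~\ref{Lem:PfComp}(1). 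The final step — and the one I expect to be the main obstacle — is to verify that no coordinate $\pf(A(i))$ of $v_A$ vanishes for general $({\bf t},{\bf x})$, so that $U_{d,A}$ consists of the single point $[v_A]$: each $A(i)$ is itself a rescaled skew matrix (of even size $d$) attached to the points $\{z_j\}_{j\ne i}$ and the vectors $\{v_j\}_{j\ne i}$, and one must produce, for each fixed $i$, an isotropic configuration making $\pf(A(i))\ne 0$ (again the alternating $u,v$ choice, suitably indexed, works and the product formula of Lemma~\ref{Lem:maxRk} gives a nonzero value); a finite intersection of dense opens over $i\in[0,d]$ then yields a general $({\bf t},{\bf x})$ for which $\Mor^{{\bf t}\to{\bf x}}_d(\P^1,\Q_n)$ is a single reduced point.
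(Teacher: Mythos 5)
Your proposal is correct and follows essentially the same route as the paper: Lemma~\ref{Lem:MorQn} reduces everything to the rank of $A$, Lemma~\ref{Lem:maxRk} with the alternating isotropic configuration certifies maximal rank generically in the odd-$d$ case, and in the even-$d$ case the same certificate applied to each submatrix $A(i)$ gives $\pf(A(i))\neq 0$ for all $i$ on a dense open set, so by Lemma~\ref{Lem:PfComp}(1) the kernel is spanned by a vector with no zero coordinate. The step you flagged as the main obstacle is handled in the paper exactly as you propose, by applying part (1) to the $d\times d$ minors.
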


\begin{proof}
Let us consider the matrix $A=A_{\Q_n, {\bf z}}$, where we chose coordinates $t_i = [z_i:1]$ on $\P^1$. Then the result follows from Lemma~\ref{Lem:MorQn} and the following claim (in italics).
{\it \begin{enumerate}
\item if $d$ is odd and ${\bf t}$ and ${\bf x}$ are general, the matrix $A$ has maximal rank;
\item If $d$ is even and ${\bf t}$ and ${\bf x}$ are general, the matrix $A$ has rank $d$ and $\Ker(A)$ is spanned by a vector with non-zero coordinates.
\end{enumerate}}
Part (1) is a consequence of Lemma~\ref{Lem:maxRk}. Let us show Part (2). Since $A$ is of odd dimension $m = d+1$, it is degenerate. Let $A(0),\cdots,A(d)$ denote the skew-symmetric matrices as in Lemma~\ref{Lem:PfComp}. Using Part (1) of the claim, for general ${\bf t}$ and ${\bf x}$, we have 
$\pf(A(i)) \neq 0$ for all $i$. In particular $A$ has rank $d=m-1$ and, by Lemma~\ref{Lem:PfComp},  its kernel is spanned by $((-1)^1\pf(A(0)),\cdots,(-1)^{d+1}\pf(A(d)))$ which is a vector with non-zero coordinates. This proves (2).
\end{proof}

\begin{corollary}\label{Cor:BoundQn}
Let $X:=\Q_n$ be a $n$-dimensional quadric. Then $d_X(2) = 2$ and for $m \geq 3$, 
$$m - 1 - \left\lfloor \frac{m-3}{n} \right\rfloor \leq d_X(m) \leq m - 1.$$
\end{corollary}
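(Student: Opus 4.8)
The plan is to treat the two claims separately. For the $2$-connecting degree, I would first observe that $d_X(2) \geq 2$ because a degree-$1$ curve on $\Q_n$ is a line, and the lines through two general points of $\Q_n$ do not cover $\Q_n \times \Q_n$ (two general points of a quadric are not collinear on it for $n \geq 2$); concretely, the expected dimension count of Remark~\ref{Rem:Bound_RC} with $m=2$, $\dim X = n$, $i_X = n$ gives $d \geq (n+1)/n > 1$, hence $d \geq 2$. For the reverse inequality $d_X(2) \leq 2$, I would produce, through two general points $x_0, x_1 \in \Q_n$, a conic: choose the plane they span together with a third general point of $\Q_n$, intersect with $\Q_n$, and note the residual conic passes through $x_0$ and $x_1$; alternatively invoke Lemma~\ref{Lemma:MorQn}(2) with $d=2$ (so $m=3$), which says that through three general points there is exactly one degree-$2$ morphism to $\Q_n$, and then apply Lemma~\ref{Lem:Reduction_degree} to pass from $m=3$ to $m=2$. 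Either way $\ev_2\colon \M_{0,2}(\Q_n,2) \to \Q_n \times \Q_n$ is dominant, so $d_X(2) = 2$.

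For the range $m \geq 3$, the upper bound $d_X(m) \leq m-1$ is immediate from Lemma~\ref{Lemma:MorQn}: taking $d = m-1$, if $d$ is even then through $m = d+1$ general points there is a (unique) degree-$d$ morphism to $\Q_n$, so $\ev_m$ is dominant; if $d = m-1$ is odd the count in Lemma~\ref{Lemma:MorQn}(1) gives emptiness, but then I would use $d = m$ instead — wait, that overshoots — so instead I would argue via Lemma~\ref{Lem:Reduction_degree} starting from a larger even value and interpolate, or more directly: for $d = m-1$ odd, note that a degree-$d$ rational curve through $m-1$ general points exists (drop one condition, landing in the even case $d' = d$, $m' = d$... ) — the cleanest route is to check that $\ev_m$ for $\M_{0,m}(\Q_n, m-1)$ is dominant by a parameter count combined with the nonemptiness coming from gluing a conic through two of the points to a degree-$(m-3)$ curve through the remaining $m-2$, using that $d_X(m-2) \leq m-3$ inductively and that the quadric is convex. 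The lower bound $m - 1 - \lfloor (m-3)/n \rfloor \leq d_X(m)$ follows from Remark~\ref{Rem:Bound_RC}: with $i_X = n$ and $\dim X = n$, dominance of $\ev_m$ forces $d \geq \frac{(m-1)n + 3 - m}{n} = m - 1 + \frac{3-m}{n}$, and since $d$ is an integer, $d \geq m - 1 - \lfloor \frac{m-3}{n} \rfloor$; here one uses that $\Q_n$ is convex so the moduli space has the expected dimension and the bound of Remark~\ref{Rem:Bound_RC} genuinely applies.

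The main obstacle I anticipate is the parity bookkeeping in the upper bound when $m - 1$ is odd: Lemma~\ref{Lemma:MorQn} only directly gives dominance of $\ev_m$ in degree $m-1$ when $m-1$ is even, so for odd $m-1$ one must either invoke a gluing/smoothing argument (attach a line or conic at one marked point to fix parity, then smooth using convexity of $\Q_n$ and the fact that the evaluation remains dominant under deformation) or set up a small induction on $m$ with base cases handled by Lemma~\ref{Lemma:MorQn} and the $d_X(2)=2$ statement, repeatedly applying Lemma~\ref{Lem:Reduction_degree} to descend from a convenient even degree. I would also double-check the edge cases $n = 2$ (where $\Q_2 = \P^1 \times \P^1$ and "degree" means the diagonal class $H_1 + H_2$, as flagged in the remark after Proposition~\ref{Prop:QnSRSC}) to make sure the floor-function bound still reads correctly, and the small-$m$ cases $m = 3, 4$ where the floor term may vanish and the bound collapses to the exact value $d_X(m) = m-1$.
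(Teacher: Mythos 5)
Your lower bound (via Remark~\ref{Rem:Bound_RC} together with convexity of $\Q_n$), your computation of $d_X(2)=2$, and your treatment of odd $m$ (where $d=m-1$ is even and Lemma~\ref{Lemma:MorQn}(2) applies directly) all match the paper's proof. The gap is exactly where you anticipated it: even $m$, i.e.\ odd $d=m-1$. The most concrete construction you propose --- glue a conic through two of the points to a degree-$(m-3)$ curve through the remaining $m-2$, using $d_X(m-2)\le m-3$ inductively --- preserves the problematic parity at every step and therefore bottoms out at $m=4$, where it demands $d_X(2)\le 1$, i.e.\ a line through two general points of $\Q_n$; this is false. (It also leaves unaddressed why the two pieces would meet, which is needed to form a connected stable map.)

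Your alternative, ``attach a line at one marked point to fix parity,'' is the paper's actual route, but as stated it is missing its key geometric input. The paper takes a degree-$(m-2)$ curve $f(\P^1)$ through the first $m-1$ general points (even degree, so Lemma~\ref{Lemma:MorQn}(2) applies) and must then join the last general point $x_m$ to this curve by a line contained in $\Q_n$. The point is that the polar (tangent) hyperplane $H_{x_m}$ cuts $\Q_n$ in the cone of lines of $\Q_n$ through $x_m$, and, being a hyperplane, it meets the positive-degree curve $f(\P^1)$ in some point $y$; the line through $x_m$ and $y$ then lies in $\Q_n$ and produces a reducible degree-$(m-1)$ stable map through all $m$ points. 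Dominance of $\ev_m$ already follows from this, since reducible stable maps belong to $\MmXd$ and $\ev_m$ is proper, so its image is closed; the paper then upgrades to an irreducible curve using irreducibility of the moduli space (\cite[Corollary~1]{KP_2001}) and density of the locus of maps with irreducible source, rather than a deformation-theoretic smoothing. Without the polar-hyperplane observation (or some equivalent reason why a line of $\Q_n$ from a general $x_m$ meets the already-constructed curve), the parity step does not close.
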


\begin{proof}
The first inequality is a consequence of Remark~\ref{Rem:Bound_RC}.
For $m = 2$, it is well known that there is no line but a conic through two general points on $\Q_n$. 
\\Assume $m \geq 3$ and fix $m$ points on $\Q_n$ and let $d = m-1$. If $m$ is odd, Lemma~\ref{Lemma:MorQn} gives the result. For even $m \geq 4$, let us define $m' := m-1$ and $d' := d-1$. For a general ${\bf x'} := (x_1,\ldots, x_{m'})\in (\Q_n)^{m'}$, again Lemma~\ref{Lemma:MorQn} implies that there exists a degree $d'$ morphism $f\colon \P^1 \to \Q_n$ through ${\bf x'} $. Now let $x_m$ be a general point in $\Q_m$. Then the hyperplane $H_{x_m}$ dual to $x_m$ meets $f(\P^1)$ in at least a point $y$ and the line $l$ passing through $x_m$ and $y$ is contained in $\Q_m$ by construction. In particular the points $(x_1,\ldots, x_{m'},x_m)$ are on the image of a degree $d$ morphism $g\colon \P^1 \cup \P^1 \to \Q_m$ with image $f(\p^1) \cup l$. Since the moduli space of stable maps to $\Q_n$ is irreducible (cf.~\cite[Corollary~1]{KP_2001}) and it has a dense open subset of maps from an irreducible curve, we may find a degree $d$ irreducible rational curve passing through general $x_1,\ldots, x_m$ in $\Q_n$.
\end{proof}

The following proposition can be seen as a refinement of Lemma~\ref{Lemma:MorQn} and will be crucial in Section~\ref{Sec:V5_1}.

\begin{proposition}\label{Prop:GenRkQn}
Let $\gamma$ be an integral curve in $\Q_n$ which is non-degenerate in $\P^{n+1}$, i.e.~$\langle \gamma\rangle =\P^{n+1}$. For the product $Z:=\gamma^m \subset (\Q_n)^m$, consider the fibre $M:=\ev^{-1}_m(Z)$. Then, for any irreducible component $M'$ of $M$, the image $\Psi_m(M')$ contains a point $({\bf t}, {\bf x})\in \M_{0,m} \times Z$ such that the corresponding matrix $A=A_{\Q_n, {\bf z} \to {\bf v}}$, has rank at least $2 \left \lceil\frac{d-1}{2} \right \rceil$.
\end{proposition}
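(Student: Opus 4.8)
The plan is to reduce the statement to the generic situation covered by Lemma~\ref{Lem:maxRk}. The idea is that the matrix $A=A_{\Q_n,{\bf z}\to {\bf v}}$ drops rank only on a proper closed subset, and since $\gamma$ is non-degenerate in $\P^{n+1}$, the curve is ``rich enough'' to place the marked points in the special configuration of Lemma~\ref{Lem:maxRk}. Concretely, fix an irreducible component $M'$ of $M=\ev_m^{-1}(Z)$ and let $M''=\Psi_m(M')\subset \M_{0,m}\times Z$. We want to show $M''$ contains a point where $\rk A \ge 2\lceil \frac{d-1}{2}\rceil$, i.e. $A$ has maximal rank for its size $m=d+1$ when $d$ is odd, or corank exactly $1$ when $d$ is even.

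First I would observe that for a morphism $f\colon \P^1\to \Q_n$ birational onto its image $\gamma_0$ (an integral, hence reduced, curve in $\Q_n$), with $m=d+1$ marked points $t_0,\dots,t_d$ mapping to $x_0,\dots,x_d$ on $\gamma$, the matrix $A$ is exactly the ``rescaled skew-symmetric matrix'' of Definition~\ref{Def:MatA}. By Lemma~\ref{Lem:MorQn}, the non-emptiness of $\Mor^{{\bf t}\to{\bf x}}_d(\P^1,\Q_n)$ forces $(\lambda_0,\dots,\lambda_d)\in\Ker A$ with all $\lambda_i\neq 0$, so for every point of $M'$ the associated $A$ has a nonzero vector in its kernel. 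Hence on $M'$ we always have $\rk A\le d$; the content of the proposition is a lower bound valid at \emph{some} point of $M''$.

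The main step is a specialization/deformation argument. Since $\gamma$ is non-degenerate, I can choose two points $p,q$ on $\gamma$ whose representing vectors $u,v\in V$ satisfy $q(u)=q(v)=0$ (automatic, as $p,q\in\Q_n$) and $B(u,v)\neq 0$, after rescaling $B(u,v)=1$; non-degeneracy of $B$ together with $\dim\langle\gamma\rangle=n+2$ guarantees such a pair exists (if $B(u,v)=0$ for all $p,q\in\gamma$ then $\langle\gamma\rangle$ would be contained in its own $B$-orthogonal, contradicting non-degeneracy of $q$ on the span). Now build the configuration ${\bf v}=(u,v,u,v,\dots)$ of Lemma~\ref{Lem:maxRk}: I need to exhibit an actual stable map in (the closure of) $M'$ realizing this configuration. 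The point is that $\ev_m$ restricted to $M'$ dominates... no --- rather, one uses that $M'$ is a component of the \emph{whole} fibre over $Z=\gamma^m$, so $\Psi_m(M')$ is a closed subvariety of $\M_{0,m}\times Z$ of the expected dimension, and one argues that it cannot avoid the locus where the target points alternate between two $B$-dual points of $\gamma$ and the source points are generic. Concretely: take a chain of conics and lines on $\gamma$ through the alternating points $p,q,p,q,\dots$ (possible since $\gamma$ is integral and, joining consecutive pairs by curves on $\Q_n$ of controlled degree as in the proof of Corollary~\ref{Cor:BoundQn}, or by using that through $p$ and $q$ there passes a conic in $\Q_n$), smooth it inside the irreducible moduli space $\M_{0,0}(\Q_n,d)$ to a map birational onto its image and lying in $M'$ after remembering the marked points; then by semicontinuity of rank and Lemma~\ref{Lem:maxRk} (which computes $\pf A\neq 0$ or $\pf A(i)\neq 0$ for the alternating configuration with generic ${\bf z}$) the rank bound $2\lceil\frac{d-1}{2}\rceil$ holds at this point, hence on a dense open subset of $M''$.

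The hard part will be the last paragraph: producing a genuine element of the given component $M'$ (not merely of $M$) whose image under $\Psi_m$ realizes the alternating configuration, while keeping the source marked points ${\bf z}$ generic so that Lemma~\ref{Lem:maxRk} applies verbatim. The delicate points are (i) that $M'$ is one component among possibly many, so I must ensure the constructed degenerate curve lies in $\overline{M'}$ --- this should follow from irreducibility of $\M_{0,0}(\Q_n,d)$ (cf.~\cite[Corollary~1]{KP_2001}), which forces $M=\ev_m^{-1}(\gamma^m)$ to behave uniformly, though one must be careful that the fibre $M$ itself may be reducible and the argument needs the component $M'$ to deform to the special locus; and (ii) checking that the alternating target configuration is actually attained on $\gamma^m$, i.e. that $(p,q,p,q,\dots)\in\gamma^m$ — which is immediate — and that some stable map in $M'$ hits it, which is where one invokes a dimension count: $\Psi_m$ is generically finite onto its image (by Lemma~\ref{Lem:MorQn} the fibres are open subsets of a $\Ker A$, zero-dimensional when $A$ has corank $\le 1$), so $\dim M'' = \dim M'$, and $M''$ surjects onto a dense subset of $\gamma^m$ via the second projection, hence meets the closed condition ``${\bf x}$ alternating between $p$ and $q$'' in the right codimension, forcing a point with the prescribed ${\bf x}$ and generic ${\bf t}$.
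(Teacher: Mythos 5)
Your plan hinges on showing that the second projection $\pr_2\colon \Psi_m(M')\to Z=\gamma^m$ is dominant, so that $\Psi_m(M')$ must meet the closed locus where ${\bf x}$ alternates between two $B$-dual points $u,v$ of $\gamma$, after which Lemma~\ref{Lem:maxRk} finishes the job. The justification you offer for this dominance is circular: you say $\Psi_m$ is generically finite onto its image because ``the fibres are open subsets of $\Ker A$, zero-dimensional when $A$ has corank $\le 1$'' --- but corank $\le 1$ at a general point of $M'$ is precisely the conclusion of the proposition. If the general matrix $A$ on $M'$ had rank $2r$ with $2r$ small, the fibres of $\Psi_m$ would have dimension $d-2r$, so $\Psi_m(M')$ would only have dimension about $d+2r-1$, far below $\dim(\M_{0,m}\times Z)=2d-1$; it would then be a proper closed subvariety with no a priori reason to contain the alternating configuration, and neither semicontinuity of rank nor irreducibility of $\M_{0,0}(\Q_n,d)$ (which says nothing about which component of the \emph{fibre} $M=\ev_m^{-1}(Z)$ your degenerate chain lands in) rescues the argument. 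This is exactly the delicate point you flag in your last paragraph, and it is not resolved.

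The paper closes this gap with a dimension count you do not have. Writing $2r$ for the generic rank of $A$ on $M'$, it bounds the fibres of $\pr_2$ by exhibiting, for each index $\kappa_s$ outside a maximal-Pfaffian set $K$, an explicit equation $\pf(A_{K_s}^{K_s})=0$ in the ${\bf z}$-variables; these equations are independent exactly when $B(v_{\kappa_0},v_{\kappa_s})\neq 0$, and the number $l$ of failures simultaneously (a) weakens the fibre bound by $l$ and (b) forces at least $\tfrac{l(d+1-2r)}{2}$ conditions $B(v_i,v_j)=0$ cutting down the image in $Z$. Comparing with $\dim\Psi_m(M')\ge d+2r-1$ yields $l(d-1-2r)\le 0$, i.e.\ either the rank is already $\ge 2\lceil\tfrac{d-1}{2}\rceil$ or $l=0$; only in the latter case is $\pr_2$ surjective, and only then does the paper invoke the alternating configuration and Lemma~\ref{Lem:maxRk} as you propose. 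So your endgame coincides with the paper's, but the mechanism that legitimises it --- the Pfaffian equations $(E_s)$ and the bookkeeping of the integer $l$ --- is the actual content of the proof and is missing from your proposal.
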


\begin{proof}
First, notice that by Lemma~\ref{Lemma:MorQn}, the value $2 \left \lceil\frac{d-1}{2} \right \rceil$ is the maximal possible rank for the matrix $A_{\Q_n, {\bf z} \to {\bf v}}$, with $({\bf t}, {\bf x})\in \Psi_m(M)$.
\\Since the variety $\M_{0,m}(\Q_n,d)=\M_{0,m}^{\bir}(\Q_n,d)$ is irreducible of dimension $(n+1)(d+1)-3$ (cf. \cite[Corollary~1]{KP_2001}) and $M$ is locally defined by $(n-1)(d+1)$ equations, coming from the local equations of $\gamma$ in $\Q_n$, the irreducible component $M'$ has dimension at least $2d -1$.

Assume that for a general element $p \in M'$, the matrix $A=A_{\Q_n, {\bf z} \to {\bf v}}$ associated to $({\bf t}, {\bf x}) = \Psi_m(p)$ has rank $2r$. 
\\By Lemma~\ref{Lem:MorQn} the general fibre of $\Psi_m\colon M' \to \Psi_m(M')$ has dimension $d - 2r$ and $\Psi_m(M')$ has dimension at least $d+2r-1$. Consider the projection on $Z$
$$\pr_2 \colon \Psi_m(M') \to Z,$$
and let us look at the dimension of its (nonempty) fibres.

Let $K := \{\kappa_1, \ldots , \kappa_{2r}\}\subset [0,d]$  with $\kappa_1 < \cdots < \kappa_{2r}$ be indices such that $\pf(A_K^K) \neq 0$ where $A_K^K$ is the matrix obtained form $A$ by the removing the lines and columns of indices outside of $K$. \\Fix $\kappa_0 \in [0,d] \setminus K$ and reorder $[0,d]$ so that $[0,d] = \{\kappa_0,\kappa_1, \ldots, \kappa_{2r}, \kappa_{2r+1}, \dots, \kappa_{d}\}$. 
Let us define $K_0:=[0,2r]$ and $K_{s}:=K_0 \cup \{\kappa_s\}$ for any $s \in [2r + 1,d]$.
By definition, the element $({\bf t}, {\bf x})$ satisfies the equations
\begin{equation}\label{E_s}\tag{$E_s$}
\pf(A_{K_s}^{K_s}) = 0 \textrm{ for all $s \in [2r + 1,d]$}.
\end{equation}
If $B(v_{\kappa_0},v_{\kappa_s}) \neq 0$, expanding $\pf(A_{K_s}^{K_s})$ with respect to the line $\kappa_s$, we get (cf. the notation of Lemma~\ref{Lem:PfComp} and \cite[Lemma~2.3]{IW_1999}):
$$0 = \sum_{j=0}^{2r}(-1)^{\kappa_s+j}a_{\kappa_s j} \pf(A_{K_0}^{K_0}(j))$$
$$=(-1)^{\kappa_s}a_{\kappa_s \kappa_0}\pf(A_{K}^{K}) + (-1)^{\kappa_s}\sum_{j=1}^{2r}(-1)^ja_{\kappa_s j} \pf(A_{K_0}^{K_0}(j))$$
$$=(-1)^{\kappa_s}\frac{B(v_{\kappa_s},v_{\kappa_0})}{z_{\kappa_0} - z_{\kappa_s}}\pf(A_{K}^{K}) + (-1)^{\kappa_s}\sum_{j=1}^{2r}(-1)^j\frac{B(v_{\kappa_s},v_{j})}{z_{j} - z_{\kappa_s}} \pf(A_{K_0}^{K_0}(j)).$$

So the previous equation is nontrivial and  the term $a_{\kappa_s \kappa_0}\pf(A_{K}^{K})$
is the only term contributing for a pole along $(z_{\kappa_0} - z_{\kappa_s})$. 
So all the nontrivial equations $\eqref{E_s}$ with $s \in [2r + 1,d]$, are independent, since the equation $\eqref{E_s}$ only involves the variables $(z_{\kappa_i})_{i \in K_0}$ and $z_{\kappa_s}$. 

Let $l$ is the number indices $s \in [2r+1,d]$ such that $B(v_{\kappa_0},v_{\kappa_s}) = 0$. Since $\gamma$ is non-degenerate, we get this way $d - 2r - l$ independent equations and hence 
$$\dim \pr_2^{-1}(\pr_2({\bf t}, {\bf x})) \leq (d-2) - (d - 2r - l) = 2r + l -2.$$  

Up to reordering the indices, we can choose $\kappa_0$ so that $l$ is minimal. In particular for each $\tilde \kappa \in [0,d] \setminus K$, there are at least $l$ vanishings $B(v_{\tilde\kappa},v_{\kappa_s}) = 0$ for $s \in \{0\}\cup [2r+1,d]$. 
This in particular implies that we have at least $\frac{l(d+1-2r)}{2}$ equations of the form
$$B(v_i,v_j) = 0 \textrm{ for } i,j \in [0,d] \setminus K \textrm{ and } i \neq j.$$
This in turn implies that the dimension of the image $\pr_2(\Psi_m(M'))$ verifies 
$$\dim(\pr_2(\Psi_m(M')))\le \dim Z - \frac{l(d+1-2r)}{2} = d + 1 - \frac{l(d+1-2r)}{2},$$
and finally that 
$$\dim (\Psi_m(M')) \le d + 1 - \frac{l(d+1-2r)}{2} + 2r + l - 2 = d + 2r -1 - l \frac{d - 1 - 2r}{2}.$$
Since $\Psi_m(M')$ has dimension at least $d + 2r - 1$, we get $l (d - 1 - 2r) \leq 0$, i.e. either 
\begin{itemize}
\item $\rk (A) \geq 2\left \lceil \frac{d-1}{2} \right \rceil$; or
\item $l = 0$.
\end{itemize}
We need to treat the second case, so let assume that $l=0$. The above estimates imply that $\pr_2$ is surjective. We now produce an element ${\bf x} \in Z$ such that for any ${\bf t} \in \M_{0,m}$, the corresponding matrix $A$ has maximal rank. Indeed, since $\gamma$ is non-degenerate, we can find two vectors $u, v \in V$ such that $[u], [v] \in \gamma$ and $B(u,v) = 1$. We conclude applying Lemma~\ref{Lem:maxRk}.

We have proved that any component $M'$ of $M$ is such that $\Psi(M')$ contains an elements whose associated matrix $A$ has rank at least $2\left\lceil \frac{d-1}{2} \right\rceil$.
\end{proof}

\begin{proof}[Proof of Proposition~\ref{Prop:QnSRSC}]
Using Corollary~\ref{Cor:BoundQn}, we see that, for $m=d+1$ the evaluation maps are dominant. To show that the general fibre $M_{\bf x}=\ev_m^{-1}({\bf x})$ is rationally connected, we have two cases. If $d$ is even, Lemma~\ref{Lemma:MorQn} implies that the map $\psi_m\colon M_{\bf x} \to \psi_m(M_{\bf x})=\M_{0,m}$ is birational. This implies that $M_{\bf x}$ is rational (cf. \cite{K_1993}). If $d$ is odd, we use Lemma~\ref{Lem:maxRk} and Lemma~\ref{Lem:MorQn} and deduce that the general fibre of $\psi_m\colon M_{\bf x} \to \psi_m(M_{\bf x})=\M_{0,m}$ is birational to $\P^1$. Moreover, Lemma~\ref{Lem:PfComp} implies that this morphism has a rational section given by $N_0 = ((-1)^{j}\pf(A(0,j)))_j$. So $M_{\bf x}$ is rational also in this case.
To obtain strong rational simple connectedness of $\Q_n$, we can apply Corollary~\ref{Cor:SuffRSC} (irreducibility of all moduli spaces $\M_{0,0}(\Q_n)$ is in \cite[Corollary~1]{KP_2001}).
\end{proof}

\subsection{Further examples, results and questions}

We start with a question about rational curves on $\P^n$ and $\Q_n$. 

\begin{question}
Do the following equalities hold?
\begin{enumerate}
\item $d_{\P^n}(m) = m - 1 - \left\lfloor \frac{2(m-2)}{n+1} \right\rfloor;$
\item $d_{\Q_n}(m) = m - 1 - \left\lfloor \frac{m-3}{n} \right\rfloor.$
\end{enumerate}
\end{question}

There are some classes of varieties, which have been studied from the perspective of rational simple connectedness. We discuss here some known results.

\subsubsection{Rational homogeneous spaces} 
Let us consider the case $X = G/P$, where $G$ a reductive group and $P$ a maximal parabolic subgroup. After the work \cite{KP_2001}, in which the irreducibility and rationality of the moduli spaces $\M_{0,n}(X,\beta)$ has been established, the geometry of rational curves in rational homogeneous spaces has been studies in several recent works (cf. \cite{LM_2003}, \cite{dJHS_2011}, \cite{BCMP_2013}).
\\ The VMRT of those varieties is given by lines and the variety of lies of $X$ is actually rational: a very explicit description of the variety of lines of $X$ appeared in \cite[Theorem~4.3]{LM_2003}). 

The variety $X$ is also (canonically) rationally simply connected (cf. \cite[Corollary 3.8]{BCMP_2013}, see also \cite[Section~15]{dJHS_2011}).

\begin{question}\label{Quest:Hom}
Let $X = G/P$ be  rational homogeneous variety. We state two open problems.
\begin{enumerate}
\item Is $X$ canonically rationally simply 3-connected?
\item More generally, is $X$ canonically strongly rationally simply connected?
\end{enumerate}
\end{question}

Question~\ref{Quest:Hom}(1) has interesting consequences from the perspective of quantum cohomology (cf. \cite[Remark~3.4]{BCMP_2013}, \cite{BCMP_2016}). Moreover the arguments in \cite[Section~15]{dJHS_2011} seem to lead to stronger statements, as Jason Starr pointed out (cf. \cite[Remark~3.5]{BCMP_2013}).

\subsubsection{Complete intersections} 

Looking at the examples we discussed previously, it is natural to try to study rational simple connectedness for complete intersections in projective spaces. This has been studied in \cite{dJS_2006c} and \cite{D_2015}. 
Their notion of rational simple connectedness is slightly different compared with the one in Definitions~\ref{Def:RSC} and \ref{Def:SRSC}, nonetheless, at least for a {\it general} complete intersections the different notations
coincide (cf. \cite{HRS_2004} and \cite{BM_2013} for the irreducibility of $\M_{0,0}^{\bir}$ for complete intersections of low degree). We recall here one of their main results.
\begin{theorem}[de Jong-Starr, DeLand]
Let $X_{\bf d} \subset \P^n$ be a general complete intersection of degree ${\bf d}=(d_1,\ldots , d_c)$, $d_i\ge 2$, with $\dim(X_{\bf d})\ge 3$. Then $X_{\bf d}$ is (canonically) strongly rationally simply connected if and only if
$$\sum d_i^2 \le n.$$
\end{theorem}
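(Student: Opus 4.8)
The plan is to recognise the numerical condition $\sum d_i^2 \le n$ as the \emph{$2$-Fano} (positive second Chern character) condition. From the normal bundle sequence of a smooth complete intersection $X_{\mathbf d}\subset\P^n$ one computes
$$\mathrm{ch}_2(T_{X_{\mathbf d}}) = \frac{n+1-\sum_i d_i^2}{2}\,H^2,$$
so that $\sum d_i^2\le n$ is exactly equivalent to $\mathrm{ch}_2(T_{X_{\mathbf d}})>0$. With this reformulation in hand, the strategy for the ``if'' direction is to verify the two bullets of Definition~\ref{Def:SRSC} for all sufficiently large degrees: for each such degree I would produce an irreducible component of the space of maps on which the evaluation $\ev_m$ is dominant with rationally connected general fibre, and then conclude via Corollary~\ref{Cor:SuffRSC}. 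The irreducibility of $\M_{0,0}^{\bir}$ in the relevant range is the content of the cited results \cite{HRS_2004} and \cite{BM_2013}, so the first bullet is taken for granted; all the effort goes into the evaluation fibre.

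For the sufficiency, I would first analyse the minimal $2$-connecting family, the conics through two general points $x,y\in X_{\mathbf d}$. The space of conics in $\P^n$ through $x,y$ is rational (hence rationally connected), and cutting it by the defining equations of $X_{\mathbf d}$ gives the space of conics in $X_{\mathbf d}$ through $x,y$; here the inequality $\sum d_i^2\le n$ is precisely what makes this incidence variety rationally connected, since the positivity of $\mathrm{ch}_2$ controls the splitting type of $T_{X_{\mathbf d}}$ restricted to the conic, hence the smoothness and expected dimension of that space. To pass from conics to arbitrary large degree I would use a gluing and smoothing argument: chains of such conics through $x$ and $y$ sweep out an irreducible component of $\M_{0,2}^{\bir}$, and the evaluation fibre over $(x,y)$ is dominated by a tower of fibrations whose bases and fibres are rationally connected, so that \cite[Corollary~1.3]{GHS_2003} propagates rational connectedness up the tower. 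Following de Jong--Starr \cite{dJS_2006c} and DeLand \cite{D_2015}, the technical heart is the construction of a \emph{very twisting scroll} in $X_{\mathbf d}$: a ruled surface whose existence is guaranteed by $\mathrm{ch}_2>0$ and whose presence forces the evaluation maps to have rationally connected fibres in every large degree, uniformly. This is where essentially all of the positivity hypothesis is consumed.

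For the ``only if'' direction, assuming $\sum d_i^2 > n$ (so $\mathrm{ch}_2\le 0$), I would argue that the general fibre of $\ev_2$ fails to be rationally connected: the failure of positivity of $\mathrm{ch}_2$ means that $T_{X_{\mathbf d}}$ restricted to a minimal curve $C$ acquires a non-positive summand, so the normal bundle of $C$ is not sufficiently ample and the deformations of $C$ fixing two points are obstructed; a specialisation and monodromy analysis then exhibits on the general fibre a dominant map to a variety of non-negative Kodaira dimension (or shows the fibre is disconnected), contradicting rational connectedness. The main obstacle is precisely the sufficiency engine above: building the very twisting scroll and showing it works in \emph{all} large degrees requires delicate deformation theory --- smoothing combs, controlling the normal bundles of the glued curves, and the vanishing of the relevant $H^1$ obstruction groups --- and it is this uniform-in-degree control, rather than the numerical bookkeeping with $\sum d_i^2$, that is the crux. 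The sharpness of the threshold in the necessity direction is likewise subtle, since it must rule out rational connectedness exactly at the boundary $\sum d_i^2 = n+1$.
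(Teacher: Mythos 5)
The first thing to say is that the paper contains no proof of this statement: it is quoted, with attribution, as a known theorem of de Jong--Starr \cite{dJS_2006c} and DeLand \cite{D_2015}, placed in the subsection on complete intersections only to contextualise the paper's own results on $V_5$. So there is no ``paper proof'' to measure your attempt against; the only question is whether your proposal stands on its own. Your opening reduction is correct: from the Euler and conormal sequences one indeed gets $\mathrm{ch}_2(T_{X_{\mathbf d}})=\tfrac{n+1-\sum_i d_i^2}{2}H^2$, so $\sum d_i^2\le n$ is exactly the $2$-Fano condition, and you correctly name the machinery (irreducibility of $\M_{0,0}^{\bir}$ from \cite{HRS_2004}, \cite{BM_2013}, propagation of rational connectedness via \cite[Corollary~1.3]{GHS_2003}, very twisting scrolls). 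But what you have written is a roadmap of the cited papers, not a proof: the step you yourself call ``the technical heart'' --- constructing a very twisting scroll from $\mathrm{ch}_2>0$ and showing it forces rationally connected fibres of $\ev_m$ uniformly in $m$ and in the degree --- is precisely the content of \cite{dJS_2006c} and \cite{D_2015}, and it is deferred wholesale.

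Moreover, the two claims you do make on your own are respectively false and unsupported. First, it is not true that $\sum d_i^2\le n$ is ``precisely'' what makes the space of conics through two general points rationally connected: that space is cut out by $\sum_i(2d_i-1)$ conditions in a $\P^3$-bundle over $\P^{n-2}$, and its rational connectedness holds in a strictly wider range. Indeed, de Jong--Starr prove that rational simple connectedness (rationally connected general fibres of $\ev_2$) already holds when $\sum d_i^2\le n+1$; the inequality $\sum d_i^2\le n$ is the threshold only for the \emph{higher} evaluation maps $\ev_m$, i.e.\ for the strong notion, so it cannot be detected at the level of $\ev_2$ or of conics. If your claimed equivalence at the conic level were correct, the necessity direction would collapse the distinction between rational simple connectedness and its strong form, contradicting the sharpness of the two thresholds. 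Second, your necessity argument (``a specialisation and monodromy analysis exhibits on the general fibre a dominant map to a variety of non-negative Kodaira dimension, or shows the fibre is disconnected'') names no mechanism for producing such a map and does not reflect how the converse actually runs: the published argument shows that strong rational simple connectedness forces the existence of a very twisting scroll, and a Chern-class computation on such a ruled surface forces positivity of $\mathrm{ch}_2(T_{X_{\mathbf d}})$ against it, i.e.\ $\sum d_i^2\le n$. So both directions of your proposal have genuine gaps exactly where they go beyond citation.
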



\section{Del Pezzo surfaces}\label{Sec:delPezzo}

In this section we study our notion of simple rational connectedness for surfaces. This case has been almost completely ignored by other authors, since del Pezzo surfaces have high Picard rank.
This is the main reason why our notions in Definitions~\ref{Def:RSC} and \ref{Def:SRSC} require the choice of a canonical polarisation.

\begin{notation}
Let $X=X_\delta$ be a del Pezzo surface of degree $\delta$, with $X_\delta \not\simeq \P^1\times \P^1$. We will always fix a geometric marking $\Pic(X_\delta) \simeq \Z^{10-\delta}$, i.e.~ a realisation of $X_\delta$ as blow up $\pi\colon X_\delta \to \P^2$ with centre the points $p_1,\ldots, p_{9-\delta}$. Every class $\beta$ on $X_\delta$ can then be (uniquely) written as $\beta = a\pi^*H - \sum_{i=1}^{9-\delta}b_i E_i$. In this context, we will write $b:=\sum_{i=1}^{9-\delta}b_i$ and $\beta = (a; b_1, \ldots, b_{9-\delta})$.
\end{notation}

Let us state our main result.

\begin{theorem}\label{Thm:dPRSC}
Let $X_\delta$ be a del Pezzo surface of degree $\delta\ge 5$. Then $X_\delta$ is canonically strongly rationally simply connected.
\end{theorem}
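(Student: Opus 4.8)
The plan is to reduce the theorem to the case $\delta = 9$, i.e. to $\P^2$ itself (already handled by Proposition~\ref{Prop:PnSRSC}), by transporting rational curves along the blow-down morphism $\pi\colon X_\delta \to \P^2$, and then to apply the degree-reduction machinery of Corollary~\ref{Cor:SuffRSC}. Concretely, for each $\delta \in \{5,6,7,8,9\}$ I would first record that the moduli space $\M_{0,0}^{\bir}(X_\delta, d\cdot\beta_H)$ is irreducible for $d$ large: since $-K_{X_\delta}$ is the canonical polarisation and $\beta_H$ is the primitive class on the ray of $H^{1}=(-K_{X_\delta})$, the relevant curves are the anticanonically-polarised ones, whose moduli spaces have been studied (Testa, \cite{T_2005}, \cite{T_2009}); for $\delta \ge 5$ the relevant $\M_{0,0}^{\bir}$ is irreducible in every sufficiently large degree. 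This disposes of the first bullet in Definition~\ref{Def:SRSC}.

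The core of the argument is the rational connectedness of the general fibre of $\ev_{m_d}$ for a suitable $m_d$ growing with $d$. Here I would exploit the birational map $\pi$ together with Lemma~\ref{Lemma:PnNonEmptyFibre}: a general rational curve of class $\beta = (a; b_1,\dots,b_{9-\delta})$ on $X_\delta$ pushes forward to a rational curve of degree $a$ in $\P^2$ passing through $p_i$ with multiplicity $b_i$, and conversely a rational curve in $\P^2$ of degree $a$ that is suitably transverse to the $p_i$'s lifts. So I would fix $m_d$ of the form $a+1 - (\text{number of base points})$ or, more robustly, choose $m_d$ equal to the number of free marked points one can impose on the $\P^1$ after using up the base-point conditions, and then invoke Lemma~\ref{Lemma:PnNonEmptyFibre} with $n = 2$: the numerical hypothesis $d \ge n m'$ becomes a linear inequality in $a$ and the $b_i$'s that is satisfied once the degree is large, and condition~(2) of that lemma holds because the blow-up points $p_1,\dots,p_{9-\delta}$ are in general position (Remark~\ref{Rmk:PnNonEmptyFibreGen} does not directly apply since we cannot move the $p_i$, but for $\delta \ge 5$ there are at most $4$ of them and one checks the rank condition by hand, exactly as in the remark, taking $v_0,v_1,v_2$ to be the standard basis and $v_3,v_4$ the two remaining general points). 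This shows the fibre $\Psi_{m_d+m'}^{-1}({\bf t},{\bf x})$ is non-empty and rational, hence the general fibre of $\ev_{m_d}$ on the corresponding component $\M_{0,m_d}^{\bir}(X_\delta,d)$ is unirational, in particular rationally connected; and $\ev_{m_d}$ is dominant because $m_d \to \infty$ with $d$ and rational connectedness of $X_\delta$ forces dominance once enough marked points are available (Proposition~\ref{Prop:dm} and Remark~\ref{Rem:Bound_RC}). Since $m_d$ is strictly increasing in $d$, Corollary~\ref{Cor:SuffRSC} (with $m=2$) then yields that $X_\delta$ is canonically strongly rationally simply connected.

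The main obstacle I anticipate is \emph{bookkeeping the base-point conditions correctly}: on $X_\delta$ the polarised degree $d = (-K_{X_\delta})\cdot\beta = 3a - b$, whereas the number of conditions needed to fix the curve in $\P^2$ depends on $a$ and on the $b_i$, so I must be careful to show that for every sufficiently large $d$ there \emph{exists} a class $\beta$ on the ray $\beta_H$ (or a dominating component of $\M_{0,m_d}^{\bir}$) for which the inequality $d \ge 2m'$ of Lemma~\ref{Lemma:PnNonEmptyFibre} holds with an $m_d \ge 2$, and that this $m_d$ can be chosen strictly increasing. A secondary subtlety is that $\beta_H$ for $-K_{X_\delta}$ is $(-K_{X_\delta})$ itself only when it is primitive; one should check primitivity (true for $\delta \ge 5$) or otherwise argue with the primitive generator — this is routine but must be stated. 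Finally, the case $X_\delta = \P^1\times\P^1$ is excluded from the geometric-marking notation and must be treated separately, but it is covered by Proposition~\ref{Prop:QnSRSC} since $\P^1\times\P^1 = \Q_2$ with the diagonal polarisation, so I would simply cite that.
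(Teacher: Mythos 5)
Your proposal follows essentially the same route as the paper: reduce to $\P^2$ via the blow-down (the paper packages this as Lemma~\ref{Lemma:RedP2}), apply Lemma~\ref{Lemma:PnNonEmptyFibre} with $n=2$ after checking the rank condition by hand for the at most four general-position centres, handle $\P^1\times\P^1$ separately, and conclude with Corollary~\ref{Cor:SuffRSC} using a strictly increasing $m_d$. The bookkeeping you flag as the main obstacle is exactly what the paper's explicit choice $m_d=\left\lfloor (2\delta-9)d/2\right\rfloor+1$ resolves, the positivity of $2\delta-9$ being precisely where the hypothesis $\delta\ge 5$ enters.
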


The 2-dimensional projective space $\P^2$ and the quadric surface $\P^1\times\P^1$ with diagonal polarisation have been studied in the previous section, so we will focus on the other del Pezzo surfaces.

We will deduce some precise results on moduli spaces of rational curves on del Pezzo surfaces with quite general polarisation, so let us recall some facts about the structure of numerical cones $\Amp(X) \subset \Nef(X)$.

\begin{lemma}\label{Lem:ConesdP}
Let $X=X_\delta$ be a del Pezzo surface of degree $\delta$, $X_\delta \not\simeq \P^2,\P^1\times\P^1$. Then a class $\beta = (a; b_1, \ldots, b_{9-\delta})$ is nef iff the following inequalities hold. 
\begin{description}
\item[$\pmb{\delta\ge 7}$] $a\ge b\ge 0$;
\item[$\pmb{\delta=5,6}$]
\
\begin{itemize}
\item $b_i\ge 0$ $\forall i$,
\item $a \ge b_i +b_j$, $\forall i>j$.
\end{itemize}
\end{description}

Moreover, the class $\beta$ is ample iff the previous inequalities are strict.
\end{lemma}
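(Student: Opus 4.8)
\textbf{Proof strategy for Lemma~\ref{Lem:ConesdP}.}
The plan is to characterize the nef cone of $X_\delta$ by testing a class $\beta=(a;b_1,\dots,b_{9-\delta})$ against the finitely many generators of the Mori cone $\NE(X_\delta)$, which for del Pezzo surfaces of degree $\ge 5$ is generated by the $(-1)$-curves. First I would recall that on $X_\delta$ with $\delta\ge 3$ the effective cone $\Eff(X_\delta)=\NE(X_\delta)$ is a rational polyhedral cone spanned by the classes of the finitely many lines (i.e.\ exceptional curves of the first kind), and that a divisor class is nef if and only if it has nonnegative intersection with each such line; ampleness then follows by Kleiman's criterion, i.e.\ strict positivity on all these generators. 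So the lemma reduces to the purely combinatorial task of listing the $(-1)$-curves in the chosen geometric marking and writing down the inequalities $(\beta\cdot C)\ge 0$.

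Next I would enumerate the $(-1)$-curves case by case. In the geometric marking with $\pi^*H\cdot E_i=0$, $E_i\cdot E_j=-\delta_{ij}$, $(\pi^*H)^2=1$, the relevant curves are: the exceptional divisors $E_i$, for which $(\beta\cdot E_i)=b_i\ge 0$; the strict transforms of lines through two of the blown-up points, $\pi^*H-E_i-E_j$ for $i<j$, giving $a-b_i-b_j\ge 0$; and, once $9-\delta\ge 5$, the strict transforms of conics through five points and (for the cubic surface) cubics through six points with a double point, but these do not occur for $\delta\ge 5$ since then $9-\delta\le 4$. Thus for $\delta=5$ (four points) and $\delta=6$ (three points) the only constraints are $b_i\ge 0$ for all $i$ and $a\ge b_i+b_j$ for all $i>j$, which is exactly the stated description; the ample cone is cut out by the corresponding strict inequalities.

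For the cases $\delta\ge 7$ I would observe that there are at most two blown-up points, so the $(-1)$-curves are the $E_i$ (giving $b_i\ge 0$) and, when there are two points, the single line $\pi^*H-E_1-E_2$ (giving $a\ge b_1+b_2$); writing $b=\sum b_i$ one checks directly that the system $\{b_i\ge 0,\ a\ge b_1+b_2\}$ is equivalent to $a\ge b\ge 0$. (For $\delta=9$, i.e.\ $X=\P^2$, there are no exceptional curves and the marking is trivial, which is why $\P^2$ is excluded; for $\delta=8$ with one point the conditions $a\ge b_1\ge 0$ match $a\ge b\ge 0$.) I expect no serious obstacle here: the only point requiring a little care is to justify that the nef cone is dual to the cone spanned by the $(-1)$-curves — i.e.\ that these curves really generate $\NE(X_\delta)$ and that no other extremal rays (such as curves of higher self-intersection) contribute — which is standard for del Pezzo surfaces of degree $\ge 5$, where in fact $\NE(X_\delta)$ is simplicial or near-simplicial and completely understood. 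The passage from nef to ample via Kleiman's criterion is then immediate since $\NE(X_\delta)$ is rational polyhedral.
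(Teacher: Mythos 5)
Your approach is the same as the paper's: the effective cone of a del Pezzo surface is generated by the $(-1)$-curves, the nef cone is its dual, and the stated inequalities are exactly $(\beta\cdot C)\ge 0$ over the list of $(-1)$-curves ($E_i$ and $\pi^*H-E_i-E_j$, no conics or cubics since $9-\delta\le 4$), with ampleness by Kleiman. The paper's proof is just a terser version of this, citing Dolgachev for the generation statement.

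There is, however, one concrete gap. Your opening claim that for $\delta\ge 3$ the cone $\Eff(X_\delta)$ is spanned by the $(-1)$-curves, so that nefness is equivalent to nonnegativity on lines, is false at $\delta=8$: there $X_8\simeq\F_1$ has a single $(-1)$-curve $E_1$, and $\Eff(\F_1)$ is generated by $E_1$ together with the fibre class $\pi^*H-E_1$, which has self-intersection $0$. Testing only against $(-1)$-curves yields just $b_1\ge 0$ and misses $a\ge b_1$ (e.g.\ $-E_1$ pairs nonnegatively with $E_1$ but is not nef). You assert the correct conditions $a\ge b_1\ge 0$ for $\delta=8$, but they do not follow from your stated criterion; the paper sidesteps this by treating $\delta=8$ as a separate (trivial) case, and its cited generation theorem is for $\delta\le 7$ only. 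A second, smaller point: your claim that for $\delta=7$ the system $\{b_1\ge 0,\ b_2\ge 0,\ a\ge b_1+b_2\}$ is \emph{equivalent} to $a\ge b\ge 0$ with $b=b_1+b_2$ is not a two-way implication (take $b_1=2$, $b_2=-1$, $a=5$); the correct nef conditions are the former, and this is really an imprecision in the lemma's own formulation rather than in your argument, but you should not assert the equivalence.
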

\proof
The result is trivial for $\delta=8$.
If $\delta \le 7$, it is well known that the effective cone $\Eff(X)$ is generated by the $(-1)$-curves of $X$ (cf. \cite[Theorem 8.2.19]{D_2012}). Since the nef cone $\Nef(X)$ is the dual of $\Eff(X)$, the inequalities in the statement are obtained intersecting $\beta$ with all the $(-1)$-curves in $X$.
\endproof

We prove now some rationality results for moduli spaces of rational curves on del Pezzo surfaces. 

\begin{lemma}\label{Lemma:RedP2}
Let $X=X_\delta$ be a del Pezzo surface, $X_\delta \not\simeq \P^1\times\P^1$. Then, for any polarisation $\beta=(a; b_1, \ldots, b_{9-\delta})$ with $(\delta, \beta)\neq (1,K_{X_\delta})$, let $M_{\bf x}$ be a fibre of the following evaluation map
$$\ev_{X_{\delta}}\colon \M_{0,m}^{\bir}(X_\delta, \beta) \to (X_\delta)^m$$
over a point ${\bf x}=(x_1,\dots, x_m)$ with $x_i \in X_\delta \setminus \cup_iE_i$ for all $i$.

Consider the point
$${\bf p}:=(\underbrace{p_1, \ldots, p_1,}_\text{$b_1$-times} \ldots, \underbrace{p_{9-\delta}, \ldots, p_{9-\delta},}_\text{$b_{9-\delta}$-times} )\in(\P^2)^{b}$$
and the evaluation map 
$$\ev_{\P^2}\colon\M_{0,m+d\cdot b}(\P^2, d\cdot a) \to (\P^2)^{m+d\cdot b}.$$
Then if the fibre $\ev_{\P^2}^{-1}(\pi({\bf x}), {\bf p})$ is irreducible, it (birationally) dominates $M_{\bf x}$.
\end{lemma}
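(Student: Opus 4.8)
The plan is to relate birational stable maps to $X_\delta$ with a blow-up structure $\pi\colon X_\delta\to\P^2$ directly to stable maps to $\P^2$ by composing with $\pi$, keeping track of the forced incidences with the centres $p_1,\dots,p_{9-\delta}$. A degree $\beta=(a;b_1,\dots,b_{9-\delta})$ curve $C\subset X_\delta$ meeting no exceptional divisor in a component, once pushed to $\P^2$, becomes a plane curve of degree $a$ passing through $p_i$ with multiplicity $b_i$; conversely a plane rational curve of degree $a$ with an ordinary point of multiplicity $b_i$ at each $p_i$ lifts to a curve in the class $\beta$ (or a class dominating it). The subtlety is that the Kontsevich space does not see multiplicities directly, so I would instead phrase the multiplicity-$b_i$-at-$p_i$ condition as the requirement that $b_i$ (for $\M^{\bir}$, distinct) preimages of $p_i$ on the normalisation $\P^1$ be prescribed. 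This is why the ``thickened'' point ${\bf p}\in(\P^2)^{b}$ appears and why one evaluates on $\M_{0,m+d\cdot b}(\P^2,d\cdot a)$ after the degree-$d$ base change implicit in the notation (the generic curve in $\M^{\bir}_{0,0}(X_\delta,d\beta)$ maps with degree $d$ onto its image only in the $d=1$ case, so more precisely one works with $d\beta=(da;db_1,\dots,db_{9-\delta})$ and the corresponding number $db$ of preimages).

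\textbf{Key steps.} First I would set up the rational map in one direction: given $[f\colon\P^1\to X_\delta]\in M_{\bf x}$ general (so $f$ is birational onto its image, the image meets no $E_i$, and $f^{-1}(E_i)$ consists of $db_i$ reduced points for each $i$), form $\pi\circ f\colon\P^1\to\P^2$; it has degree $da$, sends the $m$ marked points $t_1,\dots,t_m$ to $\pi(x_1),\dots,\pi(x_m)$, and sends the $db_i$ points of $f^{-1}(E_i)$ to $p_i$. Marking all of these gives a point of $\ev_{\P^2}^{-1}(\pi({\bf x}),{\bf p})$. This defines a morphism (on a dense open set) $M_{\bf x}\dashrightarrow \ev_{\P^2}^{-1}(\pi({\bf x}),{\bf p})$. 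Second, I would construct the inverse: given a general plane map $g\colon\P^1\to\P^2$ of degree $da$ with the $m+db$ prescribed markings mapping to $(\pi({\bf x}),{\bf p})$, the condition that the $db_i$ markings over $p_i$ are distinct guarantees $g$ has an ordinary singularity (or smooth branch) of the right local type there, so $g$ lifts through the blow-up to $\tilde g\colon\P^1\to X_\delta$ of class $da\,\pi^*H-\sum db_i E_i=d\beta$, birational onto its image, with $\tilde g(t_j)=x_j$ since $x_j\notin\cup E_i$. Third — and this is where the irreducibility hypothesis on $\ev_{\P^2}^{-1}(\pi({\bf x}),{\bf p})$ is used — I would argue that these two constructions are mutually inverse on dense opens and hence give a birational (actually an isomorphism of dense opens) identification once we know the $\P^2$-side fibre is irreducible: irreducibility lets us conclude that the image of $M_{\bf x}$, which is constructible and dense in a component, is dense in the whole fibre, so $\ev_{\P^2}^{-1}(\pi({\bf x}),{\bf p})$ dominates $M_{\bf x}$. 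The excluded case $(\delta,\beta)=(1,K_{X_1})$ is precisely the one where $\M^{\bir}_{0,0}$ is reducible (anticanonical curves on the degree-$1$ del Pezzo) and the lifting/genericity argument breaks, so it is correctly set aside.

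\textbf{Main obstacle.} The delicate point is the correspondence between combinatorial data on the Kontsevich space over $\P^2$ and genuine geometric multiplicities on $X_\delta$: I must ensure that for a \emph{general} member the preimages $f^{-1}(E_i)$ really are $db_i$ reduced points (no tangency to $E_i$, no point mapping to $E_i\cap E_j$), and symmetrically that a general plane curve through the $p_i$ with the prescribed number of distinct marked preimages has only the expected ordinary singularities there, so that its proper transform lies in class $d\beta$ and not in a class $d\beta+\sum c_iE_i$ with some $c_i>0$. Handling this requires a dimension count showing the ``bad'' loci (tangency, higher multiplicity, secant points through $E_i\cap E_j$) are of strictly smaller dimension, using $\delta\ge$ small enough that $X_\delta$ is del Pezzo and the classes in play are effective; the hypotheses $x_i\notin\cup E_i$ and the exclusion of $(1,K_{X_1})$ are exactly what make this genericity available. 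The remainder — that $\pi\circ f$ and the lift are inverse birational maps — is then a routine check, and irreducibility of the target fibre upgrades ``dense image'' to ``dominates''.
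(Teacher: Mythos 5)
Your overall route is the same as the paper's: compose with the blow-down $\pi$, identify (an open subset of) $M_{\bf x}$ with plane rational curves of degree $da$ through $\pi({\bf x})$ and through each $p_i$ with $db_i$ marked preimages, and use irreducibility of the $\P^2$-side fibre to upgrade this to domination. You also correctly guess that the excluded case $(\delta,\beta)=(1,K_{X_1})$ is the one reducible case of $\M_{0,0}^{\bir}$. Two points, however, deserve attention.

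First, the substantive gap: your argument only works near a point $[f]\in M_{\bf x}$ with irreducible source, image not contained in any $E_i$, and $f^{-1}(E_i)$ consisting of $db_i$ reduced points; for the conclusion \emph{the fibre dominates $M_{\bf x}$} you must know this good locus is dense in \emph{every} irreducible component of $M_{\bf x}$, not just in one. You flag this as the ``main obstacle'' and propose a dimension count of the bad loci, but you do not carry it out, and it is not routine for an arbitrary nef class $\beta$. The paper dispatches it differently: it invokes Testa's theorem that $\M_{0,0}^{\bir}(X_\delta,\beta)$ is irreducible (this is exactly where the hypothesis $(\delta,\beta)\neq(1,K_{X_1})$ is used), so every component of $M_{\bf x}$ contains maps with smooth source, together with Debarre's result that every component of a fibre of the evaluation map has the expected dimension. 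Without some such input your proof is incomplete at precisely the step that makes ``dominates $M_{\bf x}$'' rather than ``dominates one component of $M_{\bf x}$''.

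Second, a smaller imprecision: the forward map $[f]\mapsto[\pi\circ f]$ with \emph{ordered} markings over the $p_i$ is not well-defined, since there is no canonical ordering of the $db_i$ points of $f^{-1}(E_i)$; the paper therefore maps into the quotient $\M_{0,m+d\cdot[b]}(\P^2,d\cdot a)$ by the symmetric group and says $\gamma$ is birational onto its image \emph{there}. Consequently your claim that the two constructions give ``an isomorphism of dense opens'' between $M_{\bf x}$ and $\ev_{\P^2}^{-1}(\pi({\bf x}),{\bf p})$ is not correct as stated: the lifting map from the fibre to $M_{\bf x}$ is generically finite of degree $\prod_i (db_i)!$, not birational. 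This does not affect the domination statement, but it should be phrased through the symmetric-group quotient as the paper does.
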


\begin{proof}
We know, by hypothesis that $\M_{0,0}^{\bir}(X_\delta, \beta)$ is irreducible (if non-empty), cf.~\cite[Theorem 5.1]{T_2009}. This implies that any irreducible component $M'$ of $M_{\bf x}$ contains a curve with smooth source. Moreover \cite[Proposition~4.14]{D_2001} implies that any irreducible component of $M_{\bf x}$ has expected dimension. We define the following rational map:
$$\gamma\colon M_{\bf x} \dashrightarrow \M_{0,m+d\cdot [b]}(\P^2, d\cdot a),$$
$$[f]\mapsto [\pi \circ f]$$
where $\M_{0,m+d\cdot [b]}(\P^2, d\cdot a)$ is the quotient of $\M_{0,m+d\cdot b}(\P^2, d\cdot a)$ by the action of the symmetric group on the $b$ points. This map is birational onto the image, which corresponds to morphisms passing through $\pi({\bf x})$ and through the centres of the blow-up with the appropriate multiplicities.
\end{proof}

\begin{theorem}\label{Thm:dPM00}
Let $X_\delta$ be a del Pezzo surface of degree $\delta$, $X_\delta\not\simeq\P^2, \P^1\times \P^1$ and let $\beta=(a; b_1, \ldots, b_{9-\delta})$ be an ample class. If $3a\ge 2b$, then the moduli space $\M_{0,0}^{\bir}(X_\delta, d \beta)$ is non-empty and unirational for all $d\ge 1$.
\end{theorem}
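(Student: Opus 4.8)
The plan is to reduce everything to the projective space via Lemma~\ref{Lemma:RedP2}. Since $X_\delta\not\simeq\P^2,\P^1\times\P^1$, we fix the blow-up realisation $\pi\colon X_\delta\to\P^2$ and write $\beta=(a;b_1,\dots,b_{9-\delta})$. A morphism $f\colon\P^1\to X_\delta$ of class $d\beta$ which is birational onto its image corresponds, after composing with $\pi$, to a degree $da$ morphism $g\colon\P^1\to\P^2$ passing through each $p_i$ with multiplicity exactly $db_i$; conversely, a general such $g$ lifts to a curve of class $d\beta$ on $X_\delta$. The imposition ``$g$ passes through $p_i$ with multiplicity $db_i$'' can be rephrased (up to the symmetric group action, which does not affect unirationality or irreducibility of the image) as the requirement that $db_i$ pre-assigned distinct marked points of $\P^1$ be sent to $p_i$; so I am led to study the evaluation map
$$\ev_{\P^2}\colon\M_{0,d\cdot b}(\P^2,d\cdot a)\to(\P^2)^{d\cdot b}$$
(taking $m=0$ in Lemma~\ref{Lemma:RedP2}) and its fibre over the point ${\bf p}$ consisting of $db_i$ copies of $p_i$.

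The key step is then to invoke Lemma~\ref{Lem:MorPn} / Lemma~\ref{Lemma:PnNonEmptyFibre}. Here $n=2$, so condition (1) of Lemma~\ref{Lemma:PnNonEmptyFibre} reads $d\cdot a\ge 2\cdot(d\cdot b)$ roughly, but one must be careful: the lemma as stated splits the points into a first block of $d\cdot a+1$ ``interpolation'' points and a further block of $m'$ points, and demands $d a\ge n m'=2m'$. To realise all $db$ conditions one should distribute them appropriately; the cleanest bookkeeping is to note that after choosing $da+1$ of the required incidences as the ``free'' block (giving the rational variety $U_{da}\subset\P^{da}$ of Lemma~\ref{Lem:MorPn}), the remaining $m'=db-(da+1)$ incidences impose at most $2m'$ further linear conditions on the $\lambda_i$'s, and by Remark~\ref{Rmk:PnNonEmptyFibreGen} — which applies because the $p_i$ are general points of $\P^2$ — none of these conditions is of the forbidden form $\lambda_i=0$. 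The hypothesis $3a\ge 2b$ is exactly what makes $da\ge 2(db-da-1)$, i.e. $3da\ge 2db-2$, hold for all $d\ge 1$; this is the point at which the numerical assumption enters. Thus the fibre $\ev_{\P^2}^{-1}({\bf p})$ is non-empty and rational, hence in particular irreducible.

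With irreducibility of that fibre in hand, Lemma~\ref{Lemma:RedP2} (applied with $m=0$) says it birationally dominates $M_{\bf x}=\M_{0,0}^{\bir}(X_\delta,d\beta)$ — one should check the genericity hypotheses of that lemma are met, namely that $(\delta,\beta)\neq(1,K_{X_\delta})$, which holds since $3a\ge 2b$ forces $\beta$ away from $-K_{X_1}=(3;1,\dots,1)$, where $3a=9<2b=16$. A dominant rational image of a rational variety is unirational, so $\M_{0,0}^{\bir}(X_\delta,d\beta)$ is non-empty and unirational for every $d\ge1$, as claimed. The main obstacle I anticipate is purely combinatorial: keeping track of which marked points play the role of the ``first $d a+1$'' block versus the remaining $m'$ in Lemma~\ref{Lemma:PnNonEmptyFibre}, and verifying that the rank condition (2) there is inherited from the generality of $p_1,\dots,p_{9-\delta}$ even though several marked points are forced to map to the \emph{same} point $p_i$ (so the vectors $v_j$ are highly non-generic); this is precisely the situation flagged in Remark~\ref{Rmk:PnNonEmptyFibreGen}, and one has to run the argument there with the standard-basis reduction adapted to repeated target points, which works because the wedge $(e_0\ e_1\ e_2)\wedge v$ still has rank $\le 2$ regardless of repetitions.
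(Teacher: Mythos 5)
Your proposal follows the paper's proof essentially step for step: reduce to $\P^2$ via Lemma~\ref{Lemma:RedP2}, observe that $3a\ge 2b$ is exactly condition (1) of Lemma~\ref{Lemma:PnNonEmptyFibre} for $m'=db-(da+1)$, and conclude that the fibre over ${\bf p}$ is a projective fibration over the rational base $\M_{0,db}$, hence unirational. The one place where your write-up is weaker than the paper's is condition (2): you first invoke Remark~\ref{Rmk:PnNonEmptyFibreGen}, but that remark explicitly warns that it does \emph{not} apply to the del Pezzo situation, precisely because the target vectors $v_j$ are forced to repeat among the finitely many centres $p_1,\dots,p_{9-\delta}$. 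The paper instead verifies (2) directly, by reordering so that the first four (resp.\ six, when $\delta=6$) marked points realise the explicit configurations $(e_0,e_1,e_2,e_0+e_1+e_2)$ (resp.\ $(e_0,e_1,e_2,e_0,e_1,e_2)$), for which deleting any single column of $(v_0\wedge v_j\ \cdots\ v_{da}\wedge v_j)$ visibly preserves the rank. Your closing paragraph identifies the right issue, but the reason you give (the wedge matrix ``still has rank $\le 2$'') is only the upper bound; what condition (2) actually requires is the lower bound that no single column is essential, and that is exactly where these explicit configurations --- and, for $\delta=6$, the duplication of the basis vectors, available only because each centre is hit $db_i\ge 2$ times in that regime --- are needed. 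Finally, when $db<da+1$ (e.g.\ for $\delta\ge 7$) your bookkeeping with $m'=db-(da+1)<0$ breaks down; the paper treats this case separately using the forgetting map and Lemma~\ref{Lem:MorPn} alone, which is also the easy fix for your version.
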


\begin{proof}
We know by \cite[Theorem 5.1]{T_2009} that the spaces $\M_{0,0}^{\bir}(X_\delta, \beta)$ are irreducible for all polarisation $\beta=(a; b_1, \ldots, b_{9-\delta})$ verifying the hypothesis. 
If $\delta \ge 6$, the condition $3a\ge 2b$ is verified for every polarisation, thanks to Lemma~\ref{Lem:ConesdP}.

By Lemma~\ref{Lemma:RedP2}, it is enough to study the fibre $\ev_{\P^2}^{-1}({\bf p})$, so keeping Notation~\ref{Not:GenFibre}, we consider the map
$$\Psi_{d\cdot b}\colon \M_{0,d\cdot b}(\P^2, d\cdot a) \to \M_{0,d\cdot b} \times (\P^2)^{d\cdot b}.$$
and look at the fibres over $\U_{d\cdot b}\times (\P^2)^{d\cdot b}$, where $\U_{d\cdot b}\subset \M_{0,d\cdot b}$ is the open subset of pairwise distinct points. If $\delta \ge 7$, Lemma~\ref{Lem:ConesdP} implies that $db < da+1$, so we look at the diagram 
\[\xymatrix{
\M_{0,d\cdot a +1}(\P^2, d\cdot a) \ar[r]^-{\Psi_{d\cdot a +1}} \ar[d]^{\varphi} & \M_{0,d\cdot a +1} \times (\P^2)^{d\cdot a +1}\ar[d]^{\psi\times \pi} \\
\M_{0,d\cdot b}(\P^2, d\cdot a) \ar[r]^-{\Psi_{d\cdot b}}  & \M_{0,d\cdot b} \times (\P^2)^{d\cdot b}. }\]
where $\varphi$ and $\psi$ are the forgetting maps and $\pi$ is the projection on the first $d\cdot b$ factors. Lemma~\ref{Lem:MorPn} implies that the fibres of $\Psi_{d\cdot b}$ over $\U_{d\cdot b}\times (\P^2)^{d\cdot b}$ are unirational and this proves the thesis if $\delta \ge 7$. The same argument proves the statement for smaller $\delta$, if $d(b-a)\le 1$, since in these cases the number of marked points $d\cdot b$ is smaller than $d\cdot a+1$.
\\Assume now that $\delta\le 6$ and $d(b-a)\ge 2$. We need to apply Lemma~\ref{Lemma:PnNonEmptyFibre}: the numerical condition (1) is implied by $3a\ge 2 b$. For condition (2), the inequality $3a\ge 2 b$ (or Lemma~\ref{Lem:ConesdP} for the case $\delta = 6$) implies 
$$d\cdot a + 1 \ge \begin{cases}
6 \text{ if $\delta =6$};\\
5  \text{ if $\delta \in [1,5].$}
\end{cases}$$
So, after reordering the points ${\bf p}$, we can assume that, for $\delta \le 5$ (resp. $\delta=6$) the first $4$ (resp. $6$) marked points are given by
$$\begin{pmatrix}
1 & 0 &0 &1 \\
0 &1 &0 & 1  \\
0 &0 & 1 &1
\end{pmatrix}
 \ \ \ \ \Bigg( \text{resp. } \begin{pmatrix}
1 & 0 &0 &1 & 0 &0 \\
0 &1 &0 & 0 &1 &0  \\
0 &0 & 1 &0 &0 & 1
\end{pmatrix}\Bigg)$$
So also condition $(2)$ is satisfied, Lemma~\ref{Lemma:PnNonEmptyFibre} applies and $\ev_{\P^2}^{-1}({\bf p})$ has the structure of a projective fibration over a rational variety $\M_{0,d\cdot b}$ (cf.~\cite{K_1993}).
\end{proof}

\begin{remark}
The numerical condition $3a\ge 2b$ is verified for every polarisation if $\delta \ge 6$. Decreasing the degree, this condition is proper and becomes particularly restrictive if $\delta \le 4$: indeed, for those degrees not even 
the anticanonical polarisation verifies the numerical condition.
\end{remark}

The same strategy provides rational simple connectedness for del Pezzo surfaces of high degree.

\begin{proof}[Proof of Theorem~\ref{Thm:dPRSC}]
We can assume that $X_\delta\not\simeq\P^2, \P^1\times \P^1$. Irreducibility of the moduli spaces $\M_{0,m}^{\bir}(X_\delta, d\cdot (3H-\sum_{i=1}^{9-\delta}E_i))$, with $m\ge 2$, is guaranteed by \cite[Theorem 5.1]{T_2009}. 

Lemma~\ref{Lemma:RedP2} implies that we need to prove that the fibre $\ev_{\P^2}^{-1}(\pi({\bf x}), {\bf p})$ is irreducible and rationally connected for general ${\bf x}$, so, as in Theorem~\ref{Thm:dPM00}, we look at the map
$$\Psi_{m + d(9-\delta)}\colon \M_{0,m + d(9-\delta)}(\P^2, 3d) \to \M_{0,m + d(9-\delta)} \times (\P^2)^{m + d(9-\delta)}.$$
We want to apply Corollary~\ref{Cor:SuffRSC}, so let us define, for all $d\ge 1$, the function $m_d$ as
$$m_d:=\left\lfloor \frac{(2\delta-9)d}{2}\right\rfloor +1.$$

Since $\delta\ge 5$, for $(2\delta-9)d\ge 2(m_d-1)$ and this is exactly the numerical condition (1) form Lemma~\ref{Lemma:PnNonEmptyFibre} with $m=m_d$.  After reordering the points $(\pi({\bf x}), {\bf p})$, as in Theorem~\ref{Thm:dPM00}, we gain also condition (2), so the fibre $\ev_{\P^2}^{-1}(\pi({\bf x}), {\bf p})$ is a projective fibration over $\M_{0,m_d + d(9-\delta)}$, which is rational (cf. \cite{K_1993}). Since the function $m_d$ is strictly increasing, we obtain the strong statement.
\end{proof}

If $\delta\le 4$, rational simple connectedness becomes more subtle, as the following example shows. 

\begin{example}
Let us consider $\M_{0,2}^{\bir}(X_4, -K_{X_4})$, which is irreducible by \cite[Theorem 5.1]{T_2009}. Using Lemma~\ref{Lemma:RedP2}, we deduce that the fibre over a general ${\bf x} \in (X_4)^2$ of the evaluation map
$$\ev_{X_4}\colon \M_{0,2}^{\bir}(X_4, -K_{X_4}) \to (X_4)^2$$
is dominated by the fibre $\ev_{\P^2}^{-1}(\pi({\bf x}), {\bf p})$ of
$$\ev_{\P^2}\colon\M_{0,7}(\P^2, 3) \to (\P^2)^{7}.$$
One can show that this general fibre is isomorphic to a smooth quartic in $\P^2$ (cf. \cite[Proposition 3.2.3]{T_2005}).
\end{example}


\section{The Fano threefold $V_5$}\label{Sec:V5_1}

\subsection{The birational geometry of $V_5$}

We introduce the central object of this section. For more details one can look at \cite[pag.~60-61]{IP99}, \cite[Section~2]{S_14}, \cite[Chapter~7]{CS_2016} and \cite[Section~5.1]{KPS}.

\begin{definition}
A smooth Fano threefold $X$ with $\rho(X)=1$, Fano index $\iota(X)=2$ and degree $(H^3)=5$ is denoted by $V_5$. 
\end{definition}

From Iskovskikh's classification of smooth Fano threefolds with $\rho=1$ (cf. \cite[Section~12.2]{IP99}) we know that $V_5$ is isomorphic to the linear section of the Grassmannian $\Gr(2,5) \subset \P^9$ by a general linear subspace $\P^6 \subset \P^9$.

Since $V_5$  verifies the index condition $\iota = n-1$, where $n$ is the dimension, in the literature, sometimes authors refer to $V_5$ as the \emph{del Pezzo threefold} of degree 5.

Let us look now at the $\SL_2(\C)$-action on $V_5$: one takes a vector space $U$ with $\dim U = 2$, chooses a basis of $\wedge^2U$ to identify $U$ and its dual $U^\vee$. Let us denote by $S_n:=\Sym^n(U)$ the symmetric tensor and consider the Clebsch-Gordan decomposition of $\wedge^2 S_4$ as $\SL(U)$-module:
$$\wedge^2 S_4 \simeq S_2 \oplus S_6.$$
This shows how to induce a natural $\SL(U)$-action on $V_5=\Gr(2,S_4)\cap \P S_6$, which is the intersection of two $\SL(U)$-invariant varieties.

Using the description provided in \cite[Section~3]{MU_1983}, one deduces the orbit structure $V_5$ with respect to the $\SL_2(\C)$-action (see also \cite[pag.~60-61]{IP99}).

\begin{lemma}{\cite[Lemmas~1.5-1.6]{MU_1983}}\label{Lemma:V5OrbitsDesc}
There is an action of $\SL_2(\C)$ on $V_5$ with three orbits with the following description;
\begin{itemize}
\item a $1$-dimensional orbit $\sigma$ (with representative $[x^6]\in \P S_6$) which is a rational normal sextic in $\P^6$;
\item a $2$-dimensional orbit $E\setminus \sigma$ (with representative $[x^5y]\in \P S_6$), where $E$ is a quadric surface which is the tangential scroll of $\sigma$. The normalisation $\nu\colon \P^1\times\P^1 \to E$ is determined by a (non-complete) system of degree $(1,5)$;
\item a $3$-dimensional orbit $U$ (with representative $[xy(x^4+y^4)]\in \P S_6$).
\end{itemize}
\end{lemma}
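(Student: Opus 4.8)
The plan is to work inside the Clebsch--Gordan model $V_5=\Gr(2,S_4)\cap\P S_6\subset\P(\wedge^2 S_4)$ and to replace the geometric condition ``$[f]\in V_5$'' by a single $\SL_2$-invariant equation on the binary sextic $f$. First I would pin down the equations of $V_5$ in $\P S_6=\P^6$ equivariantly. Since $\Gr(2,S_4)=\Gr(2,5)$ is cut out in $\P^9$ by its five Pfaffian quadrics, $V_5$ is cut out in $\P S_6$ by their restrictions, which span an $\SL_2$-submodule $W\subseteq\Sym^2 S_6^\vee$ of dimension at most $5$. As $\Sym^2 S_6\cong S_{12}\oplus S_8\oplus S_4\oplus S_0$ and $V_5$ is neither all of $\P^6$ nor a quadric hypersurface (it has dimension $3$), $W$ can be neither $0$ nor the line $S_0^\vee$, so $W$ contains — hence equals — the $5$-dimensional constituent $S_4^\vee$. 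Equivalently, $[f]\in V_5$ if and only if the image of $f\otimes f$ under the projection $\Sym^2 S_6\to S_4$ vanishes, i.e.\ if and only if the fourth transvectant $(f,f)_4\in S_4$ is zero; alternatively one may test decomposability of the image of $f$ in $\wedge^2 S_4$ directly via $\omega\wedge\omega=0$ in $\wedge^4 S_4$, which amounts to the same five quadrics.

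With this criterion in hand, the orbit structure reduces to short computations. The $\SL_2$-orbit of $[f]\in\P S_6$ depends only on the multiplicity partition of the six roots of $f$, and has dimension $3-s$, where $s$ is the dimension of the subgroup of $\SL_2$ fixing the divisor of roots of $f$. Testing $(f,f)_4=0$ on representatives: each term of the transvectant $(x^6,x^6)_4$ and of $(x^5y,x^5y)_4$ contains a $y$-derivative killing the monomial, so both vanish, while a direct expansion gives $\bigl(xy(x^4+y^4),xy(x^4+y^4)\bigr)_4=0$; on the other hand $(x^4y^2,x^4y^2)_4$ and $(x^3y^3,x^3y^3)_4$ are nonzero multiples of $x^4$ and $x^2y^2$. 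Hence $[x^6]$, $[x^5y]$, $[xy(x^4+y^4)]$ all lie on $V_5$, with orbits $\sigma$, $E\setminus\sigma$, $U$ of dimensions $1$, $2$, $3$: the orbit of $[x^6]$ is the image of $\P^1$ under $|\O_{\P^1}(6)|$, a rational normal sextic; the six roots of $xy(x^4+y^4)$ are $\{0,\infty\}$ together with the fourth roots of $-1$, which form a regular octahedron on $\P^1$, so the stabilizer is finite and $U$ is three-dimensional, hence dense in the irreducible threefold $V_5$. Finally, a proper orbit in $V_5$ has positive-dimensional stabilizer, and in $\SL_2$ this forces the roots of $f$ to be supported on at most two points (and $S_6$ being irreducible there is no fixed point); among the corresponding types $(6),(5,1),(4,2),(3,3)$ the last two are excluded by the computation above, so the only proper orbits on $V_5$ are the type-$(5,1)$ orbit $E\setminus\sigma$ and $\sigma$, and $E:=\overline{\SL_2\cdot[x^5y]}=\sigma\cup(E\setminus\sigma)$ is exactly the set of sextics with a root of multiplicity $\ge 5$.

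It remains to describe $E$ geometrically. A point of the tangent line to $\sigma$ at $[(sx+ty)^6]$ has the form $a(sx+ty)^6+b(sx+ty)^5y=(sx+ty)^5\bigl(a(sx+ty)+by\bigr)$, so $E$ is the tangential scroll of $\sigma$, its points being precisely the forms $\ell_1^5\ell_2$ with $\ell_1,\ell_2$ linear. The morphism $\P^1\times\P^1\to E$, $([\ell_1],[\ell_2])\mapsto[\ell_1^5\ell_2]$, is obtained by restricting the multiplication $S_1\otimes S_5\to S_6$, so it is given by a non-complete subsystem of $|\O_{\P^1\times\P^1}(1,5)|$; it is injective off the diagonal (such a form determines its multiplicity-$5$ root and its residual root), hence is the normalisation $\nu$, and $(\nu^*\O_{\P^6}(1))^2=2\cdot 1\cdot 5=10$ recovers the degree of the tangent developable of a degree-$6$ rational normal curve — this is the ``quadric surface'' of the statement. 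The main obstacle is the first paragraph: making the defining equations of $V_5$ explicit and equivariant (equivalently, making the rank-$2$ test in $\wedge^2 S_4$ explicit); once the criterion $(f,f)_4=0$ is available everything else is the small transvectant bookkeeping above, and in any case the whole statement is \cite[Lemmas~1.5--1.6]{MU_1983}, so one could simply invoke Iskovskikh's uniqueness of $V_5$ together with that explicit model.
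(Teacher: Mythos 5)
The paper gives no proof of this lemma at all: it is stated as a direct citation of \cite[Lemmas~1.5--1.6]{MU_1983}, with the preceding sentence merely pointing to \cite[Section~3]{MU_1983} for the orbit description. So your proposal is not ``a different route from the paper'' so much as an actual argument where the paper has only a reference; what you have written is essentially a reconstruction of the Mukai--Umemura computation itself. Your argument is correct: the identification of the restricted Pl\"ucker quadrics with the $S_4^\vee$-isotypic piece of $\Sym^2 S_6^\vee$ is forced (the quadrics are the image of the irreducible module $\wedge^4 S_4^\vee\cong S_4^\vee$, so the span is $0$, impossible, or all of $S_4^\vee$, and your dimension count gives the same conclusion), the transvectant evaluations on the five representatives $x^6$, $x^5y$, $xy(x^4+y^4)$, $x^4y^2$, $x^3y^3$ check out, and the reduction of the non-dense orbits to root divisors supported on at most two points via positive-dimensional stabilizers is sound, since any $3$-dimensional orbit in the irreducible threefold $V_5$ is automatically the unique dense one. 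The degree computation $(\nu^*\O_{\P^6}(1))^2=10$ also correctly explains why $E$ is a ``quadric surface'', i.e.\ a member of $|\O_{V_5}(2)|$ (as the paper uses later in the proof of Lemma~\ref{Lemma:NumBisec}).

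Two small imprecisions, neither of which is a gap. First, the sentence ``the $\SL_2$-orbit of $[f]$ depends only on the multiplicity partition of the six roots'' is false in general (for support of size $\ge 4$ the cross-ratios are invariants); you never actually use it in that form, since your classification of proper orbits goes through stabilizers and only invokes transitivity of $\PGL_2$ on one or two points, where the claim is true --- but the sentence should be deleted or restricted. Second, $\nu$ is in fact injective everywhere, not just off the diagonal (a form $\ell_1^5\ell_2$ determines the pair $([\ell_1],[\ell_2])$ even when $\ell_1\sim\ell_2$); what you need for the normalisation claim is finiteness plus birationality onto $E$ together with normality of $\P^1\times\P^1$, which you have.
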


The following construction is essential for our analysis. This can be found in \cite[pag 147, Example (i)]{IP99}, \cite[Section~7.7]{CS_2016}.

\begin{lemma}\label{Lemma:ProjLine}
Let $l$ be a line in $V_5$. Then the projection $\phi_l\colon V_5 \dasharrow \P^4$ from $l$ is dominant on a quadric threefold $\Q_3$ and is birational. In particular $V_5$ is a rational threefold.

Let $D_l$ be the divisor spanned by the lines in $V_5$ meeting $l$. Then $D_l$ is a hyperplane section of $V_5 \subset \P^6$ and $\phi_l(D_l)=\gamma_l$ is a twisted cubic in $\Q_3$.
\end{lemma}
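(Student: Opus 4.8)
The plan is to make explicit the classical geometry of lines on $V_5\subset\P^6$ and the linear projection from one of them. First I would recall that the Hilbert scheme of lines on $V_5$ is a $\P^2$, and that through a general point of $V_5$ there pass finitely many lines; fixing one line $l$, the ambient projection $\pi_l\colon\P^6\dashrightarrow\P^4$ restricts to a rational map $\phi_l\colon V_5\dashrightarrow\P^4$ which is defined away from $l$. A degree count using $(H^3)=5$ shows that a general line through a point of $V_5$ meets $l$ in at most one point, so $\phi_l$ is generically one-to-one onto its image; since $V_5$ is covered by conics and lines, and the image is cut out by the quadrics in the ideal of $V_5$ containing $l$, one checks the image is a (necessarily smooth, by genericity of the section) quadric threefold $\Q_3\subset\P^4$. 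Because $V_5$ is smooth projective and $\phi_l$ is birational onto $\Q_3$, and $\Q_3$ is rational, this proves $V_5$ is rational.

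Next I would analyse the indeterminacy and the exceptional behaviour. Blowing up $l$ in $V_5$ resolves $\phi_l$, and the induced morphism $\widetilde{V_5}\to\Q_3$ contracts a divisor: this contracted divisor is precisely $D_l$, the union of lines in $V_5$ meeting $l$ (a general such line is collapsed to a point because it meets $l$, hence maps to a single point of $\P^4$). The class of $D_l$ is computed by intersecting with a general line of $V_5$: one finds $D_l\cdot(\text{line})=1$ generically, together with the fact that $D_l$ is swept out by a one-parameter family of lines, which forces $D_l\in|H|$, i.e.\ $D_l$ is a hyperplane section of $V_5$. To identify $\phi_l(D_l)$, I would track degrees: $D_l$ has degree $5$ in $\P^6$, it contains $l$ (which is blown down), and the residual image is a curve in $\Q_3\subset\P^4$; a Riemann–Roch / adjunction computation on the normalisation of $D_l$, or direct projection of the explicit equations, shows the image is an irreducible non-degenerate curve of degree $3$ and arithmetic genus $0$ in $\P^4$ lying on the quadric, hence a twisted cubic $\gamma_l$.

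The step I expect to be the main obstacle is the precise verification that the image of $\phi_l$ is exactly a smooth quadric threefold and that $\phi_l$ is birational rather than merely dominant of some degree — this requires genuinely using that the linear section $\P^6\subset\P^9$ defining $V_5$ is general (so that the relevant lines and the relevant quadratic equations behave as expected), and it is here that one must rule out extra contracted loci or a positive-degree covering. In practice I would either cite the classical references (\cite[pag 147]{IP99}, \cite[Section~7.7]{CS_2016}) for this birational description — which is exactly what the statement of Lemma~\ref{Lemma:ProjLine} does — or, for a self-contained argument, combine the orbit description of Lemma~\ref{Lemma:V5OrbitsDesc} with the known structure of the family of lines: the divisor $D_l$ is then recognised concretely, and the degree-$3$ rational curve $\gamma_l$ emerges from the fact that the pencil of lines in $V_5$ meeting a given general point of $D_l$ degenerates along $D_l$. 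The remaining assertions ($D_l\in|H|$, $\gamma_l$ a twisted cubic) are then routine consequences of the projection formula applied to the resolved morphism $\widetilde{V_5}\to\Q_3$.
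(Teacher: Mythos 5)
The paper never proves this lemma: it is stated as a classical fact with pointers to \cite[p.~147, Example~(i)]{IP99} and \cite[Section~7.7]{CS_2016}, so there is no internal argument to measure yours against. Your outline is the standard Sarkisov-link description (blow up $l$, identify the contracted divisor with $D_l$, read off classes), which is indeed how the cited sources establish the statement, and simply citing them, as the paper does, would be acceptable.

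That said, the one step of your sketch that does not work as written is the justification of birationality. The fibre of $\phi_l$ over a point of $\P^4$ is the residual intersection $(\Pi\cap V_5)\smallsetminus l$ for a plane $\Pi\supset l$, so the observation that ``a general line through a point of $V_5$ meets $l$ in at most one point'' is not the relevant statement and does not yield generic injectivity. The clean argument uses that $V_5=\Gr(2,5)\cap\P^6$ is scheme-theoretically an intersection of quadrics: each quadric through $V_5$ cuts a plane $\Pi\supset l$ in $l$ plus a residual line, and for general $\Pi$ these residual lines meet in a single reduced point. Alternatively, on the blow-up $\widetilde{V_5}$ one computes $(H-E)^3=H^3+3\,H\cdot E^2-E^3=5-3-0=2$, using $H^2\cdot E=0$, $H\cdot E^2=-(H\cdot l)=-1$ and $E^3=-\deg N_{l/V_5}=0$ (lines on the index-two threefold $V_5$ have normal bundle of degree $0$, a point worth being careful about); since the image spans $\P^4$, degree $2$ forces it to be a quadric hypersurface and the map to be birational in one stroke. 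With that in place, comparing $K_{\widetilde{V_5}}=-2H+E$ with $\psi^*K_{\Q_3}+\widetilde{D_l}=-3(H-E)+\widetilde{D_l}$ gives $\widetilde{D_l}=H-2E$, hence both $D_l\in|H|$ and the multiplicity $2$ of $D_l$ along $l$; the degree of $\gamma_l$ is then $5-2\cdot 1=3$ (equivalently, a general hyperplane section of $V_5$ through $l$ is a quintic del Pezzo surface on which exactly three lines meet $l$), confirming the twisted cubic. Your appeal to ``adjunction/Riemann--Roch on the normalisation of $D_l$'' for this last point is vaguer than necessary but not wrong in spirit.
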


\subsection{Rational curves on $V_5$ and rational simple connectedness}\label{SubSec:V5RSC}

Let us list some remarkable properties which hold for some moduli spaces of rational curves on $V_5$. For more details, see \cite[Section~5.1]{KPS}.
\begin{itemize}
\item let $x \in V_5$ be a general point, then the VMRT is the union of three (reduced) points (cf.~\cite{I_94});
\item  more precisely, the moduli space $\M_{0,0}(V_5,1)$ is isomorphic to the Hilbert scheme of lines and (cf.~\cite[Proposition~1.6(i)]{I_79}) 
$$\HH_{0,1} \simeq \P S_2 (\simeq \P^2).$$ 
Looking at the incidence correspondence we have the following diagram:
\begin{equation}\label{V5Incidence}
\xymatrix {
  \M_{0,1}(V_5,1) \ar[r]^-{\ev_1}_-{3 :1} \ar[d]_{\varphi} & V_5  \\
  \P^2 
}.
\end{equation}
The map $\ev_1$ is a $3$-to-$1$ cover ramified on the surface $E$ and fully ramified on the sextic $\sigma$ (cf.~\cite[1.2.1(3)]{I_94}, \cite[Corollary~2.24]{S_14}). 
\end{itemize}

We study here the moduli spaces $\M_{0,m}(V_5,d)$ for $d\ge 2$ and prove some rationality results, when $m\le 2$. Quite recently, the structure of $\M_{0,0}(V_5,d)$ has been studied in \cite[Theorem~7.9]{LT_17}. In particular, its decomposition in irreducible components is deduced. Our approach is different for $V_5$: we first prove that $\M_{0,0}^{\bir}(V_5,d)$ is unirational (and irreducible) of dimension $2d$. From this the structure of $\M_{0,0}(V_5,d)$ follows.

\begin{definition}\label{Def:V5Mline}
For any $d\ge 1$ the closure in $\M_{0,0}(V_5,d)$ of the moduli space of morphisms  which factor through a line is denoted by $\M_{0,0}^{\lin}(V_5,d)$.
\end{definition}

Our main structure result is the following.

\begin{theorem}\label{Thm:V5M0}
For any $d \geq 1$ the moduli space $\M_{0,0}(V_5,d)$ has pure dimension $2d$. Moreover, for $d \ge 2$, the moduli space $\M_{0,0}(V_5,d)$ has two irreducible components: 
$$\M_{0,0}(V_5,d) = \M_{0,0}^{\bir}(V_5,d)\cup \M_{0,0}^{\lin}(V_5,d).$$ 
Furthermore,
\begin{itemize}
\item $\M_{0,0}^{\bir}(V_5,d)$ is unirational;
\item $\M_{0,0}^{\lin}(V_5,d)$ is rational.
\end{itemize}
\end{theorem}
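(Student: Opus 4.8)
\textbf{Proof strategy for Theorem~\ref{Thm:V5M0}.}

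The plan is to use the birational description of $V_5$ from Lemma~\ref{Lemma:ProjLine} to transfer the problem to the quadric threefold $\Q_3$, where we already have very precise information from Section~\ref{Sec:First_Examples} (Lemma~\ref{Lemma:MorQn}, Proposition~\ref{Prop:GenRkQn}, and the surrounding analysis). First I would fix a general line $l \subset V_5$ and use the birational projection $\phi_l\colon V_5 \dashrightarrow \Q_3$, which contracts the divisor $D_l$ (swept out by lines meeting $l$) to the twisted cubic $\gamma_l \subset \Q_3$. Given a rational curve $C \subset V_5$ of degree $d$ not contained in $D_l$ and meeting $l$ in, say, $k$ points, its image $\phi_l(C)$ is a rational curve in $\Q_3$ of some degree $d'$ meeting $\gamma_l$ appropriately; conversely a rational curve in $\Q_3$ with prescribed incidence to $\gamma_l$ pulls back to a curve on $V_5$. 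The bookkeeping here—how $d$, $k$, and $d'$ are related, and how the points of intersection with $\gamma_l$ are distributed—is the technical heart of the argument, and I expect it to be the main obstacle: one must show that for a curve in the component $\M_{0,0}^{\bir}(V_5,d)$ the generic behaviour corresponds to $\phi_l(C)$ having the ``expected'' low degree in $\Q_3$ with $\gamma_l$-incidence conditions that cut down dimension by exactly the right amount, so that the incidence variety one builds over (an open subset of) the space of such curves in $\Q_3$ is unirational of dimension $2d$.

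Second, for the dimension count and irreducibility, I would proceed as follows. The expected dimension of $\M_{0,0}(V_5,d)$ is $\dim V_5 - K_{V_5}\cdot (d\beta_H) - 3 = 3 + 2d - 3 = 2d$, using $-K_{V_5} = 2H$ and $H\cdot\beta_H = 1$; so $2d$ is the number to hit, and one must show every component has at least (hence exactly) this dimension. For $\M_{0,0}^{\lin}(V_5,d)$: a stable map factoring through a line is a degree $d$ cover $\P^1 \to l \cong \P^1$ (or a nodal degeneration thereof), and the Hilbert scheme of lines is $\HH_{0,1}\cong\P^2$; the space of degree $d$ self-maps of $\P^1$ modulo reparametrisation is rational of dimension $2d-2$ (it is an open subset of $\P(\C[u,v]_d \oplus \C[u,v]_d)$ quotiented appropriately, or one argues directly), so the total space fibres over $\P^2$ with rational fibres of dimension $2d-2$, giving a rational variety of dimension $2d$. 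For $\M_{0,0}^{\bir}(V_5,d)$ I would build, using the $\phi_l$-correspondence above, a dominant rational map from a tower of projective bundles (coming from the $\Q_3$-side, where Lemma~\ref{Lem:MorQn} exhibits the relevant $\Mor$ spaces as open subsets of linear sections of projective spaces, and where the moduli space itself is rational by \cite{KP_2001}) onto $\M_{0,0}^{\bir}(V_5,d)$; this simultaneously gives unirationality, irreducibility, and the dimension bound $2d$.

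Third, to conclude that these are \emph{all} the components: any irreducible component of $\M_{0,0}(V_5,d)$ contains a stable map whose image is an irreducible curve (by deformation to a smoothing, or because the boundary is codimension $\geq 1$ in any component—here one uses that through a general point of $V_5$ there pass finitely many lines and that reducible/multiple-cover configurations deform), and an irreducible rational curve of degree $d$ in $V_5$ is either a line (covered $d$ times, landing in $\M^{\lin}$) or is birational onto its image (landing in $\M^{\bir}$); combined with the fact that both $\M_{0,0}^{\bir}(V_5,d)$ and $\M_{0,0}^{\lin}(V_5,d)$ are irreducible of dimension exactly $2d = \dim\M_{0,0}(V_5,d)$, they must be precisely the irreducible components (for $d\ge 2$; for $d=1$ there is only $\M_{0,0}(V_5,1)\cong\P^2$, which is both rational and covered by the pure-dimensionality statement). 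The delicate point throughout is controlling what happens over the ``bad'' loci—curves meeting $l$ with high multiplicity, or degenerating into $D_l$—and showing these form proper subvarieties not affecting the count; I would handle this by a dimension estimate showing the locus of degree $d$ curves contained in $D_l$, or meeting $l$ more than the generic number of times, has dimension strictly less than $2d$.
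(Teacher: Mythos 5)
Your strategy is essentially the paper's: rationality of $\M_{0,0}^{\lin}(V_5,d)$ via the product $\M_{0,0}(\P^1,d)\times\HH_{0,1}$ with $\HH_{0,1}\cong\P^2$, unirationality of $\M_{0,0}^{\bir}(V_5,d)$ via projection from a line to $\Q_3$ (this is Proposition~\ref{Prop:V5RatComp}, whose proof uses exactly Lemma~\ref{Lem:MorQn} and Proposition~\ref{Prop:GenRkQn} as you indicate), and a dimension estimate to rule out further components.

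Two remarks. First, the ``bookkeeping'' you flag as the main technical obstacle is dissolved in the paper by Lemma~\ref{Lemma:V5Lines}: for any curve $C$ meeting the dense $\SL_2$-orbit one can choose the line $l$ with $l\cap C=\emptyset$ and with $C\cap D_l$ a union of $d$ reduced points, so $\phi_l(C)$ has the \emph{same} degree $d$ and meets $\gamma_l$ in exactly $d$ reduced points; there is no case analysis over the number $k$ of intersections with $l$. Second, and more substantively, your trichotomy ``a stable map with irreducible source is either a $d$-fold cover of a line or birational onto its image'' is false as stated: for $d=ke$ with $k,e\ge 2$ there are $k$-fold covers of irreducible degree-$e$ curves (e.g.\ double covers of conics when $d=4$), which lie in neither $\M_{0,0}^{\lin}(V_5,d)$ nor, a priori, in $\M_{0,0}^{\bir}(V_5,d)$. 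These loci must be excluded by a dimension count (they have dimension $2e+2k-2<2d$ when $e\ge 2$), which your framework supports but which you do not carry out. The paper instead argues by induction on $d$: a component not contained in $\M_{0,0}^{\bir}(V_5,d)\cup\M_{0,0}^{\lin}(V_5,d)$ would be covered by the images of the gluing maps $\M_{0,1}(V_5,d_1)\times_{V_5}\M_{0,1}(V_5,d_2)$ with $d_1+d_2=d$, which by induction have dimension at most $2d-1<2d$, contradicting the lower bound $2d$ on the dimension of every component. You should either adopt that induction or add the multiple-cover estimate to make the exclusion step complete.
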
 

In order to exploit the quasi-homogeneity of $V_5$, we need few lemmas.

\begin{lemma}\label{Lemma:V5Lines}
Let $C$ be a curve in $V_5$. Then:
\begin{enumerate}
\item there exists a line $l$ with $l \cap C = \emptyset$;
\item If $C$ is reduced of degree $d$ and meets the dense orbit $U$, there exists a line $l$ with $l \cap C = \emptyset$ and such that its intersection with the divisor $D_l$ (spanned by the lines in $V_5$ meeting $l$)  is a union of $d$ reduced points.
\end{enumerate}
\end{lemma}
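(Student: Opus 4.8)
\textbf{Proof proposal for Lemma~\ref{Lemma:V5Lines}.}

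The plan is to exploit the $\SL_2(\C)$-action on $V_5$ together with a dimension count. For part (1), I would first recall that the Hilbert scheme of lines on $V_5$ is $\HH_{0,1}\simeq \P^2$ (a $2$-dimensional family), and that through a fixed point $x\in V_5$ there pass only finitely many lines (exactly three for general $x$, and in any case a finite number, since the VMRT is finite). Therefore the locus of lines meeting a fixed point is a proper closed subset of $\P^2$; as the curve $C$ has dimension $1$, the lines meeting $C$ sweep out at most a $1$-dimensional (hence proper) subfamily $\{l\ :\ l\cap C\neq\emptyset\}\subset\P^2$. A general line $l$ thus satisfies $l\cap C=\emptyset$. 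The one point to check carefully is that the incidence variety $\{(l,x)\ :\ x\in l\cap C\}$ has dimension $1$, which follows because the fibres of its projection to $C$ are finite (a point lies on finitely many lines of $V_5$).

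For part (2), I would additionally arrange that the divisor $D_l$ meets $C$ transversally in $d$ reduced points. By Lemma~\ref{Lemma:ProjLine}, $D_l$ is a hyperplane section of $V_5\subset\P^6$, so $D_l\cap C$ is a divisor of degree $\deg(D_l\cap C)=d$ on $C$ whenever $C\not\subset D_l$; the content is to show that for a \emph{general} line $l$ with $l\cap C=\emptyset$ one has $C\not\subset D_l$ and the intersection is reduced. Here is where quasi-homogeneity enters: since $C$ meets the dense orbit $U$, I would pick a general point $c\in C\cap U$ and use the transitivity of $\SL_2(\C)$ on $U$ to move lines around. Concretely, the family of pairs $(l,\,\text{point of }D_l\cap C)$ has the same dimension as the family of lines, so a general line $l$ (in the open set where $l\cap C=\emptyset$) will have $D_l\cap C$ a finite reduced scheme of length $d$, provided one rules out the two bad behaviours: (a) $C\subset D_l$ for every such $l$, and (b) $D_l$ is everywhere tangent to $C$ along the intersection. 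To exclude (a): if $C$ were contained in $D_l$ for all lines $l$ disjoint from $C$, then $C$ would lie in the intersection of all these hyperplane sections, forcing $C$ to be highly degenerate; intersecting with the $\SL_2$-translates and using that $C$ meets the open orbit gives a contradiction. To exclude (b): the tangency locus is a proper closed condition on $\HH_{0,1}$, again by a parameter count using that a general point of $C\cap U$ can be moved by the group.

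The main obstacle I anticipate is the reducedness statement in part (2) — showing that for a general disjoint line $l$ the scheme $D_l\cap C$ consists of $d$ \emph{distinct} points rather than fat points. Disjointness (part (1)) is a soft dimension count, but transversality requires controlling how $D_l$ sits relative to $C$ as $l$ varies, and the clean way to do this is to produce \emph{one} line $l_0$ disjoint from $C$ with $D_{l_0}\cap C$ reduced (e.g. by an explicit choice using the orbit representatives $[x^6],[x^5y],[xy(x^4+y^4)]$ from Lemma~\ref{Lemma:V5OrbitsDesc} and the fact that $C$ meets $U$), and then invoke that "reduced of length $d$" is an open condition on the parameter space of lines, so it holds for a general $l$ in the (nonempty, by part (1)) open locus of lines disjoint from $C$. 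I would also remark that the hypothesis that $C$ meets the dense orbit is exactly what guarantees this open locus is nonempty after imposing reducedness, since a curve sitting entirely in the boundary $E\cup\sigma$ could behave pathologically with respect to the special divisors $D_l$.
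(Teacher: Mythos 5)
Your part (1) is exactly the paper's argument: the lines meeting $C$ form the curve $\varphi(\ev_1^{-1}(C))$ in $\HH_{0,1}\simeq\P^2$ because each point of $V_5$ lies on only finitely many lines, so a general line misses $C$. No issues there.

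For part (2) you have correctly isolated the real content (reducedness of $D_l\cap C$) and correctly guessed that quasi-homogeneity is the relevant input, but your proposal does not actually close the gap. The exclusion of your case (b) --- that $D_l$ is nowhere tangent to $C$ for general $l$ --- is asserted via ``a parameter count using that a general point of $C\cap U$ can be moved by the group,'' which is precisely the step that needs a proof: one must know that as $l$ varies in the codimension-one locus $\{l : c\in D_l\}$ the tangent space $T_cD_l$ actually moves, and nothing in your sketch establishes this. Your fallback plan --- exhibit one explicit line $l_0$ with $D_{l_0}\cap C$ reduced using the orbit representatives, then use openness --- is not carried out, and it is unclear how one would do it uniformly for an \emph{arbitrary} reduced curve $C$ meeting $U$ (which is the generality needed, since the lemma is applied to images of arbitrary stable maps). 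The paper resolves both (a) and (b) in one stroke by invoking Kleiman's transversality theorem for general translates under the $\SL_2(\C)$-action (Theorem~2(ii) of Kleiman's 1974 paper): for a general line $l$ the divisor $D_l$ meets each of the three orbits properly, hence a general translate $gD_l=D_{g\cdot l}$ meets the reduced curve $C$ in a finite reduced scheme, necessarily of length $d$ since $D_l$ is a hyperplane section. Note also that the hypothesis that $C$ meets the dense orbit is used inside Kleiman's theorem (generic smoothness of the orbit map on $U$, where the relevant intersection points live), not merely, as you suggest at the end, to guarantee that some open locus is nonempty. So: right skeleton, but the transversality step is a genuine missing ingredient, and the standard tool that supplies it is Kleiman--Bertini.
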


\begin{proof}
Looking at the incidence correspondence \eqref{V5Incidence}, we notice that he locus of lines meeting $C$ is given by $\varphi(\ev_1^{-1}(C))$, which is a curve in $\P^2$. This proves (1).

For a general line, the surface $D_l$ meets properly the $\SL_2(\C)_2$-orbits. By Kleiman-Bertini's theorem (cf. \cite[Theorem~2(ii)]{K_1974}), we get that there exists a line $l$ such that $C \cap D_l$ is a finite reduced union of points.
\end{proof}

\begin{lemma}\label{Lemma:V5Orbits}
Any irreducible component of $\M_{0,0}(V_5, d)$ contains stable curves whose image meets $U$, the dense $\SL_2(\C)$-orbit.
\end{lemma}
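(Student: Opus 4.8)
Lemma~\ref{Lemma:V5Orbits} asserts that every irreducible component of $\M_{0,0}(V_5,d)$ contains a stable curve whose image meets the dense $\SL_2(\C)$-orbit $U$.

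The plan is to show that the locus of stable maps which are forced to avoid $U$ is too small to fill up a whole component, and then to invoke the general lower bound on the dimension of the irreducible components of a Kontsevich space. First I would set $E = V_5\setminus U$; by Lemma~\ref{Lemma:V5OrbitsDesc} this is exactly the quadric surface (the tangential scroll of $\sigma$), and it contains $\sigma$, so indeed $V_5 = U \sqcup E$. Let $Z_E\subseteq \M_{0,0}(V_5,d)$ be the closed locus of stable maps whose image is contained in $E$. Since $U = V_5\setminus E$, an irreducible component $M'$ contains a stable curve meeting $U$ if and only if $M'\not\subseteq Z_E$; thus the lemma reduces to the dimension inequality $\dim Z_E < \dim M'$ for every component $M'$. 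For the right-hand side, every irreducible component of $\M_{0,0}(V_5,d)$ has dimension at least the expected value \eqref{expected_dimension}, which here equals $\dim V_5 - K_{V_5}\cdot\beta - 3 = 2d$ (cf.~\cite[Chapter~II]{K_1996}).

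The core of the argument is the upper bound $\dim Z_E\le 2d-1$, which I would obtain through the normalisation $\nu\colon\P^1\times\P^1\to E$ of Lemma~\ref{Lemma:V5OrbitsDesc}. This $\nu$ is given by an incomplete system of type $(1,5)$, so $\nu^*H=\O(1,5)=F_1+5F_2$ in the notation $F_1=\{\mathrm{pt}\}\times\P^1$, $F_2=\P^1\times\{\mathrm{pt}\}$. A stable map in $Z_E$ whose image is not contained in the non-normal locus $\sigma$ lifts, generically uniquely since $\nu$ is birational, to a stable map to $\P^1\times\P^1$ of some class $qF_1+pF_2$, and the degree constraint reads $d=H\cdot f_*[C]=(F_1+5F_2)\cdot(qF_1+pF_2)=p+5q$. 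Since $\P^1\times\P^1$ is convex, $\M_{0,0}(\P^1\times\P^1,qF_1+pF_2)$ is pure of expected dimension $2p+2q-1$ (cf.~\cite[Theorem~2]{FP_1997}); maximising $p+q$ under $p+5q=d$ with $p,q\ge 0$ gives $p+q\le d$, attained at $q=0$, hence these strata have dimension $\le 2d-1$. The residual stable maps, those with image in $\sigma\cong\P^1$ (a rational curve of $H$-degree $6$), sweep out only multiple-cover loci of dimension of order $d/3$, far below $2d-1$. Combining, $\dim Z_E\le 2d-1<2d\le\dim M'$, so no component lies in $Z_E$, which is precisely the assertion.

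The step I expect to require the most care is this upper bound on $Z_E$: one must account uniformly for \emph{all} stable maps into $E$—those with reducible domains, multiple covers, and images meeting the singular curve $\sigma$—rather than only for smooth rational curves in the smooth locus of $E$. Lifting to the normalisation $\P^1\times\P^1$ and appealing to the purity of its Kontsevich moduli spaces is what makes the count uniform across all boundary strata at once; the exceptional strata with image inside $\sigma$ fall outside the lifting argument and are best disposed of by a separate, genuinely low-dimensional estimate.
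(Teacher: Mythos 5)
Your proof is correct and follows essentially the same route as the paper's: bound the dimension of the locus of stable maps landing in $E = V_5\setminus U$ by $2d-1$ via the normalisation $\nu\colon \P^1\times\P^1\to E$ (with a separate, much lower-dimensional estimate for maps factoring through $\sigma$), then compare with the lower bound $2d$ holding for every irreducible component of $\M_{0,0}(V_5,d)$. If anything, your maximisation of $2(p+q)-1$ over \emph{all} bidegrees $(q,p)$ with $p+5q=d$ is more careful than the paper's own computation, which invokes only the single class $\frac{d}{6}(1,5)$ (available only when $6\mid d$, thereby overlooking, e.g., the multiple covers of tangent lines that actually attain the bound) yet arrives at the same estimate $2d-1$.
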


\begin{proof}
Since any irreducible component of $\M_{0,0}(V_5, d)$ has dimension at least $2d$ (cf. \eqref{expected_dimension}), the morphisms that factor through the orbits $E\setminus \sigma$ and $\sigma$ cannot form irreducible components. Indeed, the subscheme of stable maps that factor through the rational sextic curve $\sigma$, is isomorphic to $\M_{0,0}(\P^1,d/6)$, when $d$ is a multiple of 6 and empty otherwise. In particular is has dimension $d/3 - 2 < 2d$, when non-empty. Recall that the closure $E$ of the two-dimensional orbit is a denormalisation of $\P^1 \times \P^1$ with associated map $\nu\colon \P^1 \times \P^1 \to S$ of degree $(1,5)$, cf. Lemma~\ref{Lemma:V5OrbitsDesc}. A similar computation shows that the dimension of stable map that factor through $E$ has dimension $2d-1 < 2d$ when nonempty: when $d$ is a multiple of 6, this moduli space is given by $\M_{0,0}(\P^1\times\P^1,\frac{d}{6}(1,5))$. So any irreducible component of $\M_{0,0}(V_5, d)$ contains stable maps whose image meet the dense orbit in $V_5$.
\end{proof}

The key result is the following.

\begin{proposition}\label{Prop:V5RatComp}
For any $d \geq 1$ the moduli space $\M_{0,0}^{\bir}(V_5,d)$ is irreducible, unirational of dimension $2d$.
\end{proposition}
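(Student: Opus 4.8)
The plan is to use the birational geometry of $V_5$ established in Lemma~\ref{Lemma:ProjLine} to transfer the problem to a statement about rational curves on the quadric threefold $\Q_3$, for which the explicit computations of Section~\ref{Sec:First_Examples} apply. Fix a general line $l\subset V_5$ and let $\phi_l\colon V_5\dashrightarrow \Q_3$ be the birational projection; its inverse is the resolution of the linear system of quadrics through the twisted cubic $\gamma_l=\phi_l(D_l)$, where $D_l$ is the hyperplane section of $V_5$ swept out by lines meeting $l$. A degree $d$ rational curve $C\subset V_5$ birational onto its image and disjoint from $l$ (such lines exist by Lemma~\ref{Lemma:V5Lines}(1), and by part (2) one can moreover arrange $C\cap D_l$ to be $d$ reduced points when $C$ meets the dense orbit) is sent by $\phi_l$ to a rational curve $C'\subset\Q_3$; I would compute the degree of $C'$ and the intersection data of $C'$ with $\gamma_l$ from the fact that $\phi_l^{-1}$ is given by quadrics vanishing on $\gamma_l$. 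Concretely $\phi_l$ is given on $V_5$ by the linear system $|H-l|$, so $\deg C'=(H\cdot C)-(l\cdot C)$ after accounting for how $C$ meets $D_l$, and one gets that a general birational degree $d$ curve in $V_5$ corresponds to a degree $d$ rational curve in $\Q_3$ meeting the twisted cubic $\gamma_l$ in a prescribed number of points (which I expect to be $d$, matching $\dim=2d$ on both sides).

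Next I would make this correspondence precise at the level of moduli. The open locus of $\M_{0,0}^{\bir}(V_5,d)$ consisting of curves disjoint from $l$ and meeting $U$ is dense in any component by Lemma~\ref{Lemma:V5Orbits} combined with Lemma~\ref{Lemma:V5Lines}; on this locus $\phi_l$ induces a map to a suitable incidence variety inside $\M_{0,0}(\Q_3,d)$ — the variety $\NN_d$ of (maps from) degree $d$ rational curves in $\Q_3$ meeting $\gamma_l$ at the appropriate number of points with appropriate tangency/incidence conditions. I would identify $\NN_d$, up to birational equivalence, with a fibre (or union of fibres) of an evaluation-type map $\ev_m\colon \M_{0,m}^{\bir}(\Q_3,d)\to(\Q_3)^m$ over points lying on the non-degenerate curve $\gamma_l\subset\P^4$. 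This is exactly the situation governed by Proposition~\ref{Prop:GenRkQn}: for $\gamma=\gamma_l$ non-degenerate in $\P^4$, that proposition pins down the generic rank of the rescaled skew-symmetric matrix $A$, and together with Lemma~\ref{Lem:MorQn} and Lemma~\ref{Lem:PfComp} this shows the relevant fibres are irreducible and (uni)rational — they are birational to open subsets of projective spaces, or $\P^1$-bundles over rational bases admitting a section via the Pfaffian formula $N_0$.

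From the $\Q_3$ side I therefore extract: (a) $\NN_d$ is irreducible — this uses irreducibility of $\M_{0,0}(\Q_3,d)=\M_{0,0}^{\bir}(\Q_3,d)$ from \cite{KP_2001}, the fact that imposing incidence with $\gamma_l$ cuts out a subvariety of the expected codimension whose generic point still has smooth source, and Proposition~\ref{Prop:GenRkQn} to rule out the incidence conditions from forcing reducibility; (b) $\NN_d$ is unirational — because a general such curve is described, after fixing its intersection points with $\gamma_l$, by the open subset $U_{d,A}\subset\P^d$ of Lemma~\ref{Lem:MorQn}, which fibres it over the rational variety $\M_{0,m}$ (or over a space of point configurations on $\gamma_l$) with rational fibres, so the total space is unirational by \cite{K_1993}; (c) $\dim\NN_d=2d$, matching the expected dimension \eqref{expected_dimension} of $\M_{0,0}^{\bir}(V_5,d)$. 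Pulling back along the birational map $\phi_l$ then gives that $\M_{0,0}^{\bir}(V_5,d)$ is irreducible and unirational of dimension $2d$. For small $d$ (i.e.\ $d=1$, where $\M_{0,0}^{\bir}(V_5,1)=\M_{0,0}(V_5,1)\cong\P^2$ as recalled, and arguably $d=2,3$ where $C$ might be forced to meet $l$ or $D_l$ non-transversally) I would treat the cases by hand.

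The main obstacle I anticipate is controlling the behaviour of $\phi_l$ along the indeterminacy locus and the divisor $D_l$: I must show that for a sufficiently general choice of $l$ the general curve of each component of $\M_{0,0}^{\bir}(V_5,d)$ is disjoint from $l$ and meets $D_l$ transversally in reduced points (this is where Lemma~\ref{Lemma:V5Lines}(2) and a Kleiman--Bertini argument enter), so that $\phi_l$ and its inverse are genuinely birational on the relevant families and do not change the degree or create spurious components; and dually, that a general member of $\NN_d$ does not meet the exceptional locus of $\phi_l^{-1}$ (the strict transform of a quadric cone, or the line $l$ contracted by $\phi_l^{-1}$) in a way that drops the degree. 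Establishing that the incidence conditions with $\gamma_l$ are ``as general as possible'' — precisely the content needed to invoke Proposition~\ref{Prop:GenRkQn} with $\gamma=\gamma_l$ — is the technical heart, and it is the step most likely to require a careful dimension count rather than a soft argument.
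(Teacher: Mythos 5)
Your proposal follows essentially the same route as the paper: project from a general line $l$ disjoint from the curve (Lemmas~\ref{Lemma:V5Orbits} and \ref{Lemma:V5Lines}), identify the resulting family birationally with the preimage $\ev_d^{-1}(\gamma_l^d)$ in $\M_{0,d}^{\bir}(\Q_3,d)$ modulo the symmetric group, and deduce irreducibility and unirationality from Proposition~\ref{Prop:GenRkQn} combined with Lemmas~\ref{Lem:MorQn} and \ref{Lem:PfComp}. The only ingredients you leave implicit that the paper makes explicit are the a priori equality $\dim\M_{0,0}^{\bir}(V_5,d)=2d$, proved via global generation of $f^*T_{V_5}$ for curves meeting the dense orbit, and the technical device of adding one extra marked point (working with $\ev_{d+1}^{-1}(\gamma_l^d\times\Q_3)$) so that the rank/parity analysis and the Pfaffian section apply cleanly.
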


\begin{proof}
Since every irreducible component $M$ of $\M_{0,0}^{\bir}(V_5,d)$ has dimension $\dim M \ge 2d$ (cf.~\cite[Theorem~II.1.2]{K_1996}), let  $f\colon \P^1 \to V_5$ be a general element of the dense subset in $M$. Since $f(\P^1)\cap U\neq \emptyset$, the pull-back $f^*T_{V_5}$ of the tangent bundle of $V_5$ is globally generated (cf.~\cite[Example~4.15(2)]{D_2001}). We thus have $H^1(\P^1,f^*T_{V_5}) = 0$ and $\dim H^0(\P^1,f^*T_{V_5}) = 2d$. This proves that $\M_{0,0}^{\bir}(V_5,d)$ has dimension $2d$.

We now prove that $\M_{0,0}^{\bir}(V_5,d)$ is irreducible and unirational. Lemma~\ref{Lemma:V5Orbits} implies that there exists a map $[f_0] \in M$ such that $f_0(\P^1)$ meets the dense orbit $U$ and therefore Lemma~\ref{Lemma:V5Lines}(1) implies that there exists a line $l$ such that $l \cap f_0(\P^1) = \emptyset$. In particular, projecting from $l$ as described in Lemma~\ref{Lemma:ProjLine}, we obtain a rational map at the level of moduli spaces:
$$\Phi_l\colon M \dasharrow \M_{0,[d]}^{\bir}(\Q_3,d)$$
$$[f] \mapsto [\phi_l \circ f],$$  
where $\M_{0,[d]}^{\bir}(\Q_3,d)$ is the quotient of $\M_{0,d}^{\bir}(\Q_3,d)$ by the action of the symmetric group. Since the projection $\phi_l$ is birational, note that $\Phi_l$ is birational onto its image $\Phi_l(M)$. Let us study this image: for a general $f \in M$, its image verifies $f(\P^1) \cap l = \emptyset$, by Lemma~\ref{Lemma:V5Lines}(1) and the image $[\phi_l(f)]$ in $\M_{0,d}^{\bir}(\Q_3,d)$ is a stable map of degree $d$. Furthermore $\phi_l(f)$ meets the hyperplane section $D_l$ covered by lines meeting $l$ in $d$ distinct reduced points by Lemma~\ref{Lemma:V5Lines}(2), therefore $\Phi_l(f)(\P^1)$ meets the twisted cubic $\gamma_l$ in $d$ distinct reduced points. 

In particular, let $\ev_{d} \colon \M_{0,d}^{\bir}(\Q_3,d) \to \Q_3^d$ be the evaluation map. We have that $\ev_{d}^{-1}(\gamma_l^d)$ dominates $\Phi_l(M)$, via the quotient by the symmetric group $\mathcal{S}_d$. To prove unirationality of $M$ it is therefore enough to show that $\ev_{d}^{-1}(\gamma_l^d)$ is unirational of dimension $2d$.

Let $\varphi_{d+1}\colon \M_{0,d+1}^{\bir}(\Q_3,d) \to \M_{0,d}^{\bir}(\Q_3,d)$ the map forgetting the last marked point and  $\ev_{d+1} \colon \M_{0,d+1}^{\bir}(\Q_3,d)\to \Q_3^{d+1}$ the evaluation at all marked point. Then $\varphi_{d+1} : \ev_{d+1}^{-1}(\gamma_l^d \times \Q_3) \to \ev_{d}^{-1}(\gamma_l^d)$ is dominant and since the fibres of the last map have dimension one, it is enough to prove that $\M:=\ev_{d+1}^{-1}(\gamma_l^d \times \Q_3)$ is irreducible and unirational of dimension $2d+1$.

We prove that $\M$ is actually rational. Let $\M'$ be an irreducible component of $\M$; following Notation~\ref{Not:GenFibre}, consider the map
$$\Psi_{d+1}\colon \M' \to \M_{0,d+1}\times \gamma_l^d \times \Q_3.$$
Since $\M$ is locally defined by $2d$ equations, coming from the local equations of $\gamma_l$ in $\Q_3$, the irreducible component $\M'$ has dimension at least $2d +1$.

Proposition~\ref{Prop:GenRkQn} implies that $\Psi_{d+1}(\M')$ contains an elements whose associated matrix $A$ has rank at least $2\left\lceil \frac{d-1}{2} \right\rceil$. We study separately two cases. 

\begin{itemize}
\item If $d = 2k$ even, a general element in $\Psi_{d+1}(\M')$ defines a matrix $A$ of rank $2k$ and the fibre is a point, by Lemma~\ref{Lemma:MorQn}. The map $\Psi_{d+1}$ is therefore birational to its image and has to be dominant since $\M_{0,d+1}\times \gamma_l^d \times \Q_3$ has dimension $2d +1$. In particular $\M'$ is rational and there is a unique such irreducible component.

\item For $d = 2k+1$ odd, a general element in $\Psi_{d+1}(\M')$ defines a matrix $A$ of rank $2k$, the fibre is an open subset in $\P^1$ and even an open subset of a $\P^1$ bundle over $\Psi_{d+1}(\M')$. 
Indeed, by Lemma~\ref{Lem:PfComp}, we have a rational section given by the vector $N_0 = ((-1)^{j}\pf(A(0,j)))_j$. The image $\Psi_{d+1}(\M')$ is then given by the locus
$$\Psi_{d+1}(\M') = \{({\bf t}, {\bf x}) \in \M_{0,d+1}\times \gamma_l^d \times \Q_3 \ | \ \pf(A) = 0\}.$$
Consider the map $\theta\colon \Psi_{d+1}(\M') \to \M_{0,d+1}\times \gamma_l^d$ obtained by projection. This map is surjective and its fibre is a linear section of $\Q_3$ therefore a (rational) $2$-dimensional quadric. Since $\theta$ has a rational section (take $\Psi_{d+1}(\M') \cap (\M_{0,d+1} \times (\gamma_l)^d \times L)$ where $L$ is a general line in $\Q_3$), we see that $\M'$ is rational and there is a unique such irreducible component.
\end{itemize}
This concludes the proof.
\end{proof}

\begin{proof}[Proof of Theorem~\ref{Thm:V5M0}]
We first prove that $\M_{0,0}^{\lin}(V_5,d)$ is rational. We have the following isomorphism given by composition of stable maps:
$$\M_{0,0}(\P^1,d) \times \M_{0,0}(V_5,1) \to \M_{0,0}^{\lin}(V_5,d).$$
It is well known that the moduli space $\M_{0,0}(\P^1,d)$ is rational of dimension $2d-2$ (cf.~ \cite[Corollary~1]{KP_2001}) and $\M_{0,0}^{\lin}(V_5,1)$ is isomorphic to $\P^2$ so that $\M_{0,0}^{\lin}(V_5,d)$ is rational of dimension $2d$.

We prove by induction on $d$ that $\M_{0,0}(V_5,d)$ has two irreducible components $\M_{0,0}^{\bir}(V_5,d)$ and $\M_{0,0}^{\lin}(V_5,d)$, both of dimension $2d$. Indeed, if there exists an irreducible component not contained in $\M_{0,0}^{\bir}(V_5,d)\cup\M_{0,0}^{\lin}(V_5,d)$, then it is covered by the images of the gluing maps: 
$$\M_{0,1}(V_5,d_1) \times_{V_5} \M_{0,1}(V_5,d_2) \to \M_{0,0}(V_5,d)$$ 
with $d_1 + d_2 = d$. 
By induction assumption, this space has dimension at most $(2d_1 + 1) + (2d_2 + 1) - 3 = 2d - 1 < 2d$. Contradiction. So the theorem is a consequence of Proposition~\ref{Prop:V5RatComp}.
\end{proof}

This argument can be easily adapted to obtain rational simple connectedness.

\begin{theorem}\label{Main_V_5}
The Fano threefold $V_5$ is rationally simply connected.
\end{theorem}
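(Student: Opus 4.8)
The plan is to mimic the proof of Theorem~\ref{Thm:V5M0}, replacing the unpointed moduli space with the $2$-pointed one and adding the verification that the general fibre of $\ev_2$ is rationally connected. Concretely, I would check the two conditions of Definition~\ref{Def:RSC} for $(V_5,H)$, where $\beta_H$ is the class of a conic (the primitive class on the ray of $H^2$, since $\iota(V_5)=2$); so ``degree $d$'' here really means curves of anticanonical degree $2d$, i.e.\ of hyperplane degree $d$.

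\textbf{Irreducibility of $\M_{0,0}^{\bir}$.} This is exactly Proposition~\ref{Prop:V5RatComp}: for every $d\ge 1$ the space $\M_{0,0}^{\bir}(V_5,d)$ is irreducible (and unirational) of dimension $2d$. So the first bullet of Definition~\ref{Def:RSC} holds for all $d\ge 1$, with no lower bound needed.

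\textbf{Dominance and rational connectedness of $\ev_2$.} Here I would again project from a general line $l\subset V_5$ to reduce to the quadric threefold $\Q_3$. Fix two general points $x_1,x_2\in V_5$; by Lemma~\ref{Lemma:V5Lines} (and quasi-homogeneity) choose $l$ disjoint from both, and with $D_l$ meeting everything properly. The birational map $\phi_l\colon V_5\dashrightarrow \Q_3$ of Lemma~\ref{Lemma:ProjLine} identifies, birationally, the fibre $\ev_2^{-1}(x_1,x_2)$ inside $\M_{0,2}^{\bir}(V_5,d)$ with a locus of degree-$d$ rational curves in $\Q_3$ passing through $\phi_l(x_1),\phi_l(x_2)$ \emph{and} meeting the twisted cubic $\gamma_l$ in $d$ reduced points. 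As in Proposition~\ref{Prop:V5RatComp}, pass to the $(d+2)$-pointed space and consider $\M:=\ev_{d+2}^{-1}(\gamma_l^d\times\{y_1\}\times\{y_2\})$ where $y_i=\phi_l(x_i)$; forgetting the last two points exhibits this as dominating the fibre we care about, so it suffices to show $\M$ is irreducible and rationally connected of the expected dimension $2d-1$. Now I would run the same dichotomy on the parity of $d$ using Proposition~\ref{Prop:GenRkQn}, Lemma~\ref{Lemma:MorQn} and Lemma~\ref{Lem:PfComp}: for $d$ even, $\Psi_{d+2}$ is birational onto a rational base (a product of $\M_{0,d+2}$ with $\gamma_l^d$ and the incidence conditions at $y_1,y_2$), and for $d$ odd the fibre is a $\P^1$ with a rational section given by the Pfaffian vector $N_0$. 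Either way each component of $\M$ is rational, and in particular rationally connected; since dominance of $\ev_2$ follows from the count $d_{V_5}(2)\le$ (small), the second bullet of Definition~\ref{Def:RSC} holds for $d$ large (in fact for all $d\ge 2$ or so).

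\textbf{Main obstacle.} The delicate point is not the parity bookkeeping but controlling the incidence conditions at the two \emph{fixed} points $y_1,y_2\in\Q_3$ simultaneously with the $d$ moving incidences along $\gamma_l$: one must know that imposing $f(t_i)=y_i$ for $i=1,2$ still leaves the general matrix $A=A_{\Q_3,{\bf z}\to{\bf v}}$ of maximal (or corank-one) rank and with kernel not contained in a coordinate hyperplane, so that the fibre description of Lemma~\ref{Lem:MorQn} and Proposition~\ref{Prop:GenRkQn} still applies and the relevant component has exactly the expected dimension $2d-1$. I expect this to follow from a Kleiman--Bertini/generality argument as in Remark~\ref{Rmk:PnNonEmptyFibreGen} together with Proposition~\ref{Prop:GenRkQn} applied to the non-degenerate curve $\gamma_l\cup(\text{points})$, but it is the step that requires genuine care; everything else is a transcription of the proof of Proposition~\ref{Prop:V5RatComp}. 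Finally, to also get the \emph{strong} form one would feed the $(d+m)$-pointed analogue into Corollary~\ref{Cor:SuffRSC} with $m_d$ the appropriate strictly increasing function of $d$, but the statement to be proved here only asserts plain rational simple connectedness, so verifying Definition~\ref{Def:RSC} for $d\gg 0$ suffices.
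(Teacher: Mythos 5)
Your overall strategy is the paper's: use Proposition~\ref{Prop:V5RatComp} for irreducibility, project from a line $l$ to $\Q_3$ via Lemma~\ref{Lemma:ProjLine}, trade the fibre of $\ev_2$ for a fibre $\ev^{-1}(\gamma_l^{\bullet}\times\cdots)$ on the quadric, and run the Pfaffian rank analysis (Proposition~\ref{Prop:GenRkQn}, Lemma~\ref{Lemma:MorQn}, Lemma~\ref{Lem:PfComp}) with the usual parity dichotomy. The one concrete divergence is the choice of centre: the paper takes $l$ \emph{through} $x_1$ and not through $x_2$, so the projected curve has degree $d-1$, meets $\gamma_l$ in $d-2$ points, one marked point is only constrained to lie on the line $\ell=\phi_l^{-1}(x_1)\subset\Q_3$ (hence still moves along a curve), and only the single point $\phi_l(x_2)$ is genuinely pinned; the relevant space is $\ev_d^{-1}(\gamma_l^{d-2}\times\ell\times\{\phi_l(x_2)\})$. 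You instead take $l$ disjoint from both points and pin \emph{two} coordinates at $y_1,y_2$.

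That difference is exactly where your acknowledged ``main obstacle'' bites, and it is left unresolved, so the proposal has a genuine gap at its crux. Proposition~\ref{Prop:GenRkQn} is stated for $Z=\gamma^m$ with $\gamma$ a single integral non-degenerate curve, and the last step of its proof (the case $l=0$) consists in placing all the vectors $v_i$ alternately at two points $[u],[v]\in\gamma$ with $B(u,v)=1$ and invoking Lemma~\ref{Lem:maxRk}; this move is impossible for coordinates frozen at $y_1,y_2$, and ``$\gamma_l\cup(\text{points})$'' is not an integral curve, so the proposition cannot simply be quoted for $Z=\gamma_l^d\times\{y_1\}\times\{y_2\}$. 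The paper's choice of centre is precisely the device that reduces the pinned data to one point (plus a moving point on $\ell$) and lowers the degree, which is what makes its appeal to Proposition~\ref{Prop:GenRkQn} together with the generality of $(x_1,x_2)$ workable; to salvage your version you would have to reprove a variant of Proposition~\ref{Prop:GenRkQn} allowing two fixed general points, which is not a transcription. Two smaller slips: the expected dimension of your $\M=\ev_{d+2}^{-1}(\gamma_l^d\times\{y_1\}\times\{y_2\})$ is $2d-4$, not $2d-1$ (since $\dim\M_{0,d+2}(\Q_3,d)=4d+2$ and the conditions impose $2d+6$), matching the fibre $\ev_2^{-1}(x_1,x_2)$ directly; and the comparison map to that fibre forgets the $d$ markings on $\gamma_l$, not ``the last two points''.
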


\begin{proof}
Let $\ev_2 : \M_{0,2}^{\bir}(V_5,d) \to V_5^2$ be the evaluation map. We want to prove that the general fibre $M_{\bf x}=\ev_2^{-1}({\bf x})$ is rationally connected of dimension $2d - 4$. We will actually prove that $M_{\bf x}$ is unirational. 

Since $\M_{0,2}^{\bir}(V_5,d)$ is irreducible of dimension $2d + 2$, any irreducible component $M'$ of $M_{\bf x}$ contains a curve with smooth source and \cite[Proposition~4.14]{D_2001} implies that $M'$ has expected dimension $2d-4$.

Let $l$ be a line passing through $x_1$ but not though $x_2$. By composition with the projection $\phi_l\colon V_5 \dasharrow \Q_3$ (cf. Lemma~\ref{Lemma:ProjLine} for the notation), a stable map $[f] \in M_{\bf x}$ with multiplicity one in $x_1$ is sent to a stable map $\phi_l \circ f$ in $\Q_3$ of degree $d-1$ whose image meets the twisted cubic $\gamma_l$ in $d-2$ points and which passes through the fibre $\ell:=\phi_l^{-1}(x_1)$ over $x_1$. So, we obtain a rational map at the level of moduli spaces:
$$\Phi_l\colon M' \dasharrow \M_{0,[d]}^{\bir}(\Q_3,d-1)$$
$$[f] \mapsto [\phi_l \circ f],$$  
birational onto its image, Here, $\M_{0,[d]}^{\bir}(\Q_3,d-1)$ is the quotient of $\M_{0,d}^{\bir}(\Q_3,d-1)$ by the action of the symmetric group. We look at the image: by construction, $\ell$ is a line meeting $\gamma_l$. Furthermore, since $x_2$ is in general position, it is outside the indeterminacy locus of $\phi_l$, we deduce that the image $\Phi_l(M_{\bf x})$ is dominated (via the quotient by the symmetric group $\mathcal{S}_d$) by $\ev_{d}^{-1}(\gamma_l^{d-2}\times \ell \times \{ \phi_l(x_2)\})$, where $\ev_{d} : \M_{0,d}(\Q_3, d-1) \to \Q_3^{d}$ is the usual evaluation map.
\\It is therefore enough to prove that $\M:=\ev_{d}^{-1}(\gamma_l^{d-2}\times \ell \times \{ \phi_l(x_2)\})$ is irreducible and unirational of dimension $2d - 4$. 

As in Proposition~\ref{Prop:V5RatComp} we prove that $\M$ is actually rational. Let $\M'$ be an irreducible component of $\M'$; following Notation~\ref{Not:GenFibre}, we look at the map
$$\Psi_{d}\colon \M' \to \M_{0,d}\times \gamma_l^{d-2}\times L \times \{ \phi_l(x_2)\}.$$
The same argument of Proposition~\ref{Prop:V5RatComp} implies that $\M'$ has dimension at least $2d -4$.

Since the points ${\bf x}=(x_1,x_2)$ are in general position, we can apply Proposition~\ref{Prop:GenRkQn} which implies that $\Psi_{d}(\M')$ contains an elements whose associated matrix $A$ has rank at least $2\left\lceil \frac{d-2}{2} \right\rceil$. We study separately two cases. 

\begin{itemize}
\item If $d = 2k+1$ odd, a general element in $\Psi_{d}(\M')$ defines a matrix $A$ of rank $2k$ and we conclude by Lemma~\ref{Lemma:MorQn} that $\Psi_{d}$ is birational onto $\M_{0,d}\times \gamma_l^{d-2}\times L \times \{ \phi_l(x_2)\}$, which is rational.
\item For $d = 2k$ even, a general element in $\Psi_{d}(\M')$ defines a matrix $A$ of rank $2k-2$, the fibre is an open subset in $\P^1$ and even an open subset of a $\P^1$ bundle over $\Psi_{d+1}(\M')$, by Lemma~\ref{Lem:PfComp}. We conclude as in Proposition~\ref{Prop:V5RatComp}.
\end{itemize}
This concludes the proof.
\end{proof}

It would be interesting to answer the following, since our methods cannot be adapted if the number of marked points increases.

\begin{question}
Is the Fano threefold $V_5$ strongly rationally simply connected?
\end{question}

\subsection{Moduli spaces of rational curves on $V_5$ in low degree}\label{Subsec:V5_2}

In this final part we give some explicit construction for Hilbert schemes of rational curves in $V_5$, which imply rationality results for low-degree $\M_{0,0}^{\bir}(V_5,d)$.
These are well-known results, especially up to degree $5$. The case $d=6$ of sextic curves has been studied in \cite{TZ_2012b} and the authors prove that this space is rational (cf.~\cite{TZ_2012a}, \cite[Theorems~1.1-5.1]{TZ_2012b}).

For any integer $d\ge 1$, the Hilbert scheme of degree $d$ rational curves in $V_5$ is denoted by $\HH_{0,d}=\HH_{0,d}(V_5)$.

Up to cubic curves, the description of $\HH_{0,d}$ is classical.
\paragraph{\bf Lines in $V_5$} $\HH_{0,1}\simeq \P S_2$ (cf.~\cite[Proposition~1.6(i)]{I_79}, \cite[Theorem~1]{FN_1989}).
\paragraph{\bf Conics in $V_5$} $\HH_{0,2}\simeq \P S_4$ (cf.~\cite[Proposition~1.22]{I_94}).
\paragraph{\bf Cubics in $V_5$} $\HH_{0,3}\simeq \Gr(2,S_4)$ (cf.~\cite[Proposition~2.46]{S_14}).

This implies that $\M_{0,0}^{\bir}(V_5,d)$ is rational for $1\le d\le 3$.

\

\paragraph{\bf Quartics in $V_5$} Although the following results are well-known to experts, we provide here geometric proofs.

\begin{proposition}
The moduli space $\M_{0,0}^{\bir}(V_5,4)$ is rational of dimension 8.
\end{proposition}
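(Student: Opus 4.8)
The plan is to reuse the projection-from-a-line technique that underlies Proposition~\ref{Prop:V5RatComp}, but applied in a single fixed degree rather than with the forgetting-map induction. First I would take a general quartic rational curve $C\subset V_5$; since it moves in an $8$-dimensional family (this is $\dim\M_{0,0}^{\bir}(V_5,4)=2d=8$ from Proposition~\ref{Prop:V5RatComp}), by Lemma~\ref{Lemma:V5Orbits} a general such $C$ meets the dense orbit $U$, and then Lemma~\ref{Lemma:V5Lines} produces a line $l$ with $l\cap C=\emptyset$ and $C\cap D_l$ a reduced scheme of $4$ points. Projecting from $l$ via $\phi_l\colon V_5\dashrightarrow\Q_3$ (Lemma~\ref{Lemma:ProjLine}), the curve $C$ maps birationally to a degree-$4$ rational curve in $\Q_3$ meeting the twisted cubic $\gamma_l$ in $4$ reduced points. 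As in Proposition~\ref{Prop:V5RatComp} this gives a birational map
$$\M_{0,0}^{\bir}(V_5,4)\dashrightarrow \ev_4^{-1}(\gamma_l^4)/\mathcal{S}_4,$$
so it suffices to prove $\ev_4^{-1}(\gamma_l^4)\subset\M_{0,4}^{\bir}(\Q_3,4)$ is rational of dimension $8$.

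Next I would analyse $\M:=\ev_5^{-1}(\gamma_l^4\times\Q_3)$ exactly as in the cited proof: forgetting the last marked point dominates $\ev_4^{-1}(\gamma_l^4)$ with one-dimensional fibres, so it is enough that $\M$ is rational of dimension $9$. Here $d=4$ is even, $2k=4$, so the relevant case is the ``even'' branch: by Proposition~\ref{Prop:GenRkQn} the image $\Psi_5(\M')$ of any component contains a point whose rescaled skew matrix $A$ has rank $2\lceil 3/2\rceil=4$, hence by Lemma~\ref{Lemma:MorQn} a general fibre of $\Psi_5$ is a point, $\Psi_5$ is birational onto $\M_{0,5}\times\gamma_l^4\times\Q_3$ (dimension $2+4+3=9$), which is rational by \cite{K_1993}. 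This forces a unique component and rationality of $\M$, hence of $\ev_4^{-1}(\gamma_l^4)$ and therefore of $\M_{0,0}^{\bir}(V_5,4)$; the dimension count $2\cdot4=8$ follows from Proposition~\ref{Prop:V5RatComp} or is recovered directly from the chain of birational maps.

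The one genuinely new verification, compared to the general theorem, is that the hypotheses of Lemma~\ref{Lemma:V5Lines}(2) and Proposition~\ref{Prop:GenRkQn} are met for a \emph{general} quartic $C$: that $C$ really does meet $U$ (so that $f^*T_{V_5}$ is globally generated and $C$ deforms in the expected $8$-dimensional family, no component consisting of curves trapped in $E\cup\sigma$), and that the twisted-cubic image $\gamma_l$ is non-degenerate in $\P^4$ so Proposition~\ref{Prop:GenRkQn} applies. Both are immediate: the first from Lemma~\ref{Lemma:V5Orbits} (a dimension count rules out quartics factoring through $E$ or $\sigma$), and the second because $\gamma_l$ is a twisted cubic in $\Q_3\subset\P^4$, hence spans $\P^3$—wait, one must be slightly careful here: Proposition~\ref{Prop:GenRkQn} requires $\gamma$ \emph{non-degenerate in $\P^{n+1}=\P^4$}, so I would instead invoke it for the image curves $f(\P^1)$ themselves (degree $4$ rational curves in $\Q_3$ are non-degenerate for general $C$) rather than for $\gamma_l$, exactly as the statement is used inside Proposition~\ref{Prop:V5RatComp}. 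This compatibility of hypotheses is the main obstacle; once it is checked the argument is a formal specialisation of the degree-general proof.
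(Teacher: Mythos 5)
There is a genuine gap. The projection-from-a-line method you are specialising is exactly the proof of Proposition~\ref{Prop:V5RatComp}, and by design it only yields \emph{unirationality}: the map $\Phi_l$ identifies $\M_{0,0}^{\bir}(V_5,4)$ birationally with (an open piece of) the quotient $\ev_4^{-1}(\gamma_l^4)/\mathcal{S}_4$, because the four intersection points of $\phi_l(C)$ with the twisted cubic $\gamma_l$ carry no natural ordering. Showing that $\ev_4^{-1}(\gamma_l^4)$ (or $\ev_5^{-1}(\gamma_l^4\times\Q_3)$) is rational therefore only exhibits a dominant map from a rational variety onto $\M_{0,0}^{\bir}(V_5,4)$; to conclude rationality you would additionally have to prove that the quotient of this space by the diagonal $\mathcal{S}_4$-action (permuting simultaneously the marked points and the factors of $\gamma_l^4$) is rational, and your proposal never addresses this. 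This is precisely why Proposition~\ref{Prop:V5RatComp} is stated with ``unirational'' and why the paper needs a separate argument in low degree. A secondary point: your worry about non-degeneracy is aimed at the wrong curve, and your proposed fix does not work --- in Proposition~\ref{Prop:GenRkQn} the curve $\gamma$ is the one constraining the marked points (here $\gamma_l$), not the image $f(\P^1)$, so you cannot ``invoke it for the image curves themselves.''

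The paper's actual proof is a different and more direct geometric construction. A rational quartic $C\subset V_5$ spans a $\P^4=H_C$, and $H_C\cap V_5$ is a quintic curve of arithmetic genus one, hence the union of $C$ and a single bisecant line $l_C$; conversely a general codimension-two linear section through a line $l$ is the union of $l$ and a rational quartic. This realises $\M_{0,0}^{\bir}(V_5,4)$ birationally as the fibre product of $\M_{0,0}^{\bir}(V_5,1)\simeq\P^2$ with the flag variety ${\rm Fl}(2,5;7)$ over $\Gr(2,7)$, a Zariski-locally trivial fibration with fibre $\Gr(3,5)$ over a rational base, whence rationality with no group quotient to descend through. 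If you want to salvage your route, you would need an extra argument for the rationality of the $\mathcal{S}_4$-quotient; as written, your argument proves only what Proposition~\ref{Prop:V5RatComp} already gives.
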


\begin{proof}
A rational quartic $C$ in $V_5$ is clearly non-degenerate, so its linear span is a linear subspace $H_C$ isomorphic to $\p^4 \subset \p^6$. This linear subspace $H_C$ intersects $V_5$ along a degree $5$ curve and, by adjunction formula, this is an elliptic curve. Therefore $H_C \cap V_5$ is the union of $C$ and a line $l_C$ bisecant to $C$. We therefore obtain a rational map 
$$\M_{0,0}^{\bir}(V_5,4) \dasharrow \M_{0,0}^{\bir}(V_5,1).$$
Furthermore, if $l \subset V_5$ is a line and $H$ is a general codimension $2$ linear subspace in $\p^6$ with $l \subset H$, then $H \cap V_5$ is the union of $l$ and a degree $4$ rational curve. This proves that $\M_{0,0}^{\bir}(V_5,4)$ is birational to the variety $Z$ obtained via the following fibre product:
$$\xymatrix{Z \ar[r] \ar[d] & {\rm Fl}(2,5;7) \ar[d] \\
\M_{0,0}^{\bir}(V_5,1) \ar[r] & \Gr(2,7), }$$
where $\Gr(2;7)$ is the grassmannian of lines in $\p^6$ and ${\rm Fl}(2,5;7)$ is the partial flag variety of pairs $(l,H)$ with $l$ a line in $\p^6$ and $H \supset l$ a linear subspace of codimension $2$ in $\p^6$. Since the right vertical map is locally trivial in the Zariski topology, the same is true for the left vertical map. The fibres of both vertical maps are rational (isomorphic to $\Gr(3,5)$) and $\M_{0,0}^{\bir}(V_5,1)$ is rational, proving the result.
\end{proof}

We recall some notation on (projective) normality (cf.~\cite[Definition~I.4.52]{SR_1949}). 

\begin{definition}
Let $X$ be an integral variety and let $X\subset \P^n$ be a non-degenerate embedding. One says that $X$ is a \emph{normal subvariety} in $\P^n$ if it is not a projection of a subvariety of the same degree in $\P^N$, with $N>n$.
\\One says that $X$ is \emph{linearly normal} if the restriction map
$$H^0(\P^n, \O_{\P^n}(1))\to H^0(X, \O_X(1))$$
is surjective.
\end{definition}

\begin{remark}
Let $X\subset \P^n$ be non-degenerate. Then one can show the following implications (cf.~\cite[Proposition~8.1.5]{D_2012}):
\begin{itemize}
\item $X$ is a normal subvariety in $\P^n \Rightarrow X$ is linearly normal;
\item $X$ is linearly normal and normal $\Rightarrow X$ is a normal subvariety in $\P^n$.
\end{itemize}
\end{remark}

\paragraph{\bf Bisecants in $V_5$}In order to study moduli spaces of quintic and sextic curves, we need to look at bisecant lines. Let $C\subset V_5$ be a smooth connected curve of degree $d$ and genus $g$. The set of bisecants to $C$ (i.e.~the lines in $V_5$ meeting $C$ in two points) is expected to be of codimension two in $\P S_2$, that is, finite.

\begin{lemma}\label{Lemma:NumBisec}
Keep the notation as above. \\Then the number of bisecants to $C$ is 
${{d-2}\choose{2}} -3g$.
\end{lemma}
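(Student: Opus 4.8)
The statement counts the bisecant lines to a smooth connected curve $C \subset V_5$ of degree $d$ and genus $g$. My plan is to compute this number via intersection theory on the surface $D_l$ (or rather on the incidence variety of lines) combined with a classical secant-variety/excess-intersection formula, but the cleanest route is to use the projection from a line. First I would fix a general line $l \subset V_5$ with $l \cap C = \emptyset$, which exists by Lemma~\ref{Lemma:V5Lines}(1), and use the birational projection $\phi_l\colon V_5 \dashrightarrow \Q_3$ of Lemma~\ref{Lemma:ProjLine}. Under this projection $C$ maps to a curve $C' \subset \Q_3 \subset \P^4$ of some degree $d'$ and genus $g$; the lines in $V_5$ meeting $C$ correspond to lines in $\Q_3$ meeting $C'$ and also meeting the twisted cubic $\gamma_l = \phi_l(D_l)$ (the lines of $V_5$ through two points of $C$ become the lines of $\Q_3$ meeting $C'$ twice, which is what we want to count). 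So the count reduces to a secant-line computation on the quadric threefold, where the Fano variety of lines is the classical $\Q_3$-line locus.

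Alternatively — and I think this is the approach the authors will take — one computes directly on $V_5$: the locus of lines meeting $C$ is the curve $B := \varphi(\ev_1^{-1}(C)) \subset \P S_2 \cong \P^2$ from diagram~\eqref{V5Incidence}, and a bisecant corresponds to a line $l$ whose fibre $\ev_1^{-1}(l)$ in $\M_{0,1}(V_5,1)$ meets $\ev_1^{-1}(C)$ in (at least) two points over distinct points of $C$. Concretely, the bisecants are the nodes (or rather, the self-intersection points accounting for the multiplicity) of the plane curve $B$, or equivalently one uses the double-point formula for the map $\ev_1^{-1}(C) \to \P^2$. I would:
\begin{enumerate}
\item Determine the class of $B$ in $\P^2 = \P S_2$, i.e.\ its degree as a plane curve, in terms of $d$ (this should be linear in $d$, computed by intersecting $B$ with a general line in $\P S_2$, which corresponds to a pencil of lines in $V_5$ sweeping out a surface and counting how many meet $C$).
\item Compute the geometric genus of $\ev_1^{-1}(C)$, which is the genus $g$ of $C$ since $\ev_1$ is $3:1$ but its restriction to $\ev_1^{-1}(C)$ is generically injective onto $C$ — careful here, one must track the ramification of $\ev_1$ along $E$ and $\sigma$ and whether $C$ meets them, which is why the genericity hypotheses (implicitly, $C$ meets the dense orbit, or is in general position) matter.
\item Apply the Plücker / double-point formula: a nodal plane curve of degree $\delta$ and geometric genus $g$ has $\binom{\delta-1}{2} - g$ nodes; matching $\binom{\delta-1}{2} - g$ with $\binom{d-2}{2} - 3g$ forces $\delta = d-1$ and forces the genus contribution to be counted with multiplicity $3$, reflecting the $3:1$ nature of $\ev_1$.
\end{enumerate}

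The main obstacle I expect is Step~(2)–(3): correctly accounting for the factor $3$ in front of $g$ and for the behaviour of $B$ near the ramification surface $E$ of the $3:1$ cover $\ev_1$. The naive double-point formula on $\P^2$ gives a term $-g$, not $-3g$, so the extra factor must come from the three-to-one structure: each point of $C$ gives three points in $\ev_1^{-1}(C)$, so the relevant "genus" entering the node count is that of the triple cover, and one must show $\ev_1^{-1}(C) \to C$ is étale of degree $3$ away from $C \cap E$ (generic position), whence $\ev_1^{-1}(C)$ has three connected components each isomorphic to $C$, OR is connected with genus computed by Riemann–Hurwitz — and the correct bookkeeping yields exactly the coefficient $3$. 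Pinning down this cover and its ramification, together with verifying that the expected codimension-two (hence finite) count is actually achieved for $C$ in general position, is where the real work lies; everything else is a routine Chern-class computation on $V_5$ and its variety of lines.
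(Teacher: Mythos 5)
Your second approach is indeed the one the paper takes: set $C' := \ev_1^{-1}(C)$ inside the incidence correspondence $\M_{0,1}(V_5,1)$ and count bisecants as double points of the projection $C' \to \P S_2$ via the formula $\binom{\delta-1}{2} - g'$. However, your Step (3) reverse-engineers the wrong intermediate values and would derail the computation if carried out. You predict that matching forces $\delta = d-1$ and source genus $3g$; in fact $\delta = d$, because the preimage in $\M_{0,1}(V_5,1)$ of a general line $\ell \subset \P S_2$ maps onto the hyperplane section $D_l \subset V_5$ swept out by the lines meeting the corresponding line $l$ of $V_5$, and $C \cap D_l$ consists of $d$ points. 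Likewise the genus of $C'$ is not $3g$: the cover $\ev_1$ is $3{:}1$ with branch divisor the surface $E \in |\O_{V_5}(2)|$, so $C' \to C$ is a connected triple cover simply ramified at the $2d$ points of $C \cap E$, and Riemann--Hurwitz gives $2g'-2 = 3(2g-2) + 2d$, i.e.\ $g' = 3g + d - 2$. The two corrections exactly cancel: $\binom{d-1}{2} - (3g + d - 2) = \binom{d-2}{2} - 3g$. So the factor $3$ in front of $g$ does come from the triple cover as you suspected, but the ramification term $d-2$ is what reconciles the degree-$d$ double-point count $\binom{d-1}{2}$ with the $\binom{d-2}{2}$ in the statement; your proposed split ($\delta = d-1$, genus $3g$) happens to give the same total, yet both individual values are wrong, and the ``three disjoint copies of $C$'' alternative you leave open is excluded precisely by the nontrivial ramification over $C \cap E$.
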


\begin{proof}
We compute the genus $g'$ of the inverse image $C'$ of C in the incidence correspondence $\M_{0,1}(V_5,1)$ (cf.~\eqref{V5Incidence}) and the degree $\delta$ of the projection $C'\to \P S_2$. The projection $\M_{0,1}(V_5,1)\to V_5$ is 3-to-1 and ramifies exactly on $E$ (cf.~the notation of Lemma~\ref{Lemma:V5OrbitsDesc} and \cite[Lemma~2.3]{FN_1989}), which is a section of $\O_{V_5}(2)$, Hurwitz formula implies 
$$2g'-2=3(2g-2)+2d,$$
giving $g'=3g+d-2$. Moreover, $\delta=d$, since the image in $V_5$ of the inverse image in $\M_{0,1}(V_5,1)$ of a line $\ell$ in $\P S_2$ is the hyperplane section $H_l\cap V_5$ swept out by the lines which intersect the line $l$, where $l$ is the line of $V_5$ corresponding in $\P S_2$ to the orthogonal of $\ell$. The expected number is the double locus of the projection $C'\to \P S_2$ which is ${{d-1}\choose{2}} -g'$.
\end{proof}

\paragraph{\bf Quintics in $V_5$ via Desargues configurations} For any smooth rational quintic $C$ in $V_5$, one sees it is linearly normal, i.e. it is contained in a unique hyperplane $H_C \subset \P S_6$. Let us consider the surface $S=S_C:=H_C\cap V_5$. The surface $S$ is either non-normal, or a (possibly singular) del Pezzo surface of degree 5. 
\\If $S$ is non-normal, then $S= V_5 \cap H_l$,  where $l$ is a line in $V_5$ and $H_l$ is the hyperplane containing the first infinitesimal neighbourhood $\Spec(\O_{V_5}/I^2_{l/{V_5}})$ of $l$ in $V_5$, since the non-normal locus is a line (cf.~\cite[Proposition~5.8]{BS_2007}). In this case, $S$ is the union of lines in $V_5$ meeting $l$ (cf.~\cite[Proposition~2.1]{PS_1988}, \cite[Corollary~1.3]{FN_1989}) and contains a 6-dimensional family of rational normal quintics (excluding the particular case of hyperplane sections of $S$). For such a quintic, the line $l$ is a triple point of the scheme (of length 3) of bisecant lines to $C$ (cf.~Lemma~\ref{Lemma:NumBisec}). Varying the line $l$, we obtain a 8-dimensional family of quintic curves, which is smaller than the expected dimension $10$.

Let us study now the normal case. Let $\tilde S$ be the anticanonical model of $S$, then the class $\O_{\tilde S}(C)$ of $C$ in $\tilde S$ is given by $\alpha-K_{\tilde S}$, for some root $\alpha \in \Pic(\tilde S)$ (cf.~\cite[Section~8.2.3]{D_2012}). We recall that the roots of $\Pic(\tilde S)$, i.e.~ the orthogonal lattice to $K_{\tilde S}$ in $\Pic(\tilde S)$ (of
type $A_4$) are the differences $\pi_i-\pi_j$, with $i \neq j$, where $\pi_i$ are the markings of the del Pezzo surface.

One can see the roots in $\Pic(\tilde S)$ via a self-conjugate (with respect to the fundamental conic) {\it Desargues configuration} in $\P S_2$. This it a very geometric interpretation of the root system $A_4$. Assume that $S$ is smooth (i.e.~it contains ten lines, which are points in $\P S_2$). The lines of $S$ are the sums $K_S + \pi_i+\pi_j$, indexed by the 2-subsets $\{i,j\} \subset [1,5]$ and we denote them by $\ell_{ij}$. Let $M_{ij}$ be the corresponding point in $\P S_2$. Moreover, two lines $\ell_{ij}$ and $\ell_{kl}$ meet if and only if $M_{ij}$ and $M_{kl}$ are conjugate (with respect to the fundamental conic) in $\P S_2$. This is the case if and only if $\{i,j\} \cap \{k,l\}=\emptyset$.
So the ten points $M_{ij}$ and their polar lines $M_{ij}^\perp$ form a Desargues configuration, i.e.~\emph{a line contains three points and a point lies in three lines}.
This configuration contains twenty triangles indexed by the roots of $\Pic(\tilde S)$: a root is an ordered pair $(i,j)$ and we associate to it a triangle in the following way. Take for instance $(1,2)$: we associate to it the triangle $M_{13}M_{14}M_{15}$, which is in perspective from $M_{12}$ to its conjugate $M_{23}M_{24}M_{25}$.

\begin{proposition}
The moduli space $\M_{0,0}^{\bir}(V_5,5)$ is rational of dimension 10.
\end{proposition}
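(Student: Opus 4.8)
The plan is to realise $\M_{0,0}^{\bir}(V_5,5)$ as a projective bundle over a rational base, exploiting the del Pezzo/Desargues picture set up above. First I would reduce to the normal case: as already observed before the statement, the rational quintics $C$ for which $S_C:=H_C\cap V_5$ is non-normal form only an $8$-dimensional family, so (since $\dim\M_{0,0}^{\bir}(V_5,5)=10$ by Proposition~\ref{Prop:V5RatComp}) the general $C$ has $S_C$ a del Pezzo surface of degree $5$, and in fact a smooth one, because the hyperplane sections of $V_5$ with worse singularities form a proper subvariety of $(\P^6)^\vee$ while $C\mapsto H_C$ dominates $(\P^6)^\vee$. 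On the anticanonically embedded $S=S_C$ we have $[C]=-K_S+\alpha$ for a root $\alpha\in\Pic(S)$.

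Next I would organise these data into a parameter space. Let $\mathcal W$ be the incidence variety of pairs $(S,\alpha)$, with $S\subset V_5$ a degree-$5$ del Pezzo hyperplane section and $\alpha$ a root of $\Pic(S)$ (equivalently a quintic class $D=-K_S+\alpha$); it is a finite cover of $(\P^6)^\vee$ of degree equal to the number of roots. The assignment $C\mapsto(S_C,[C]+K_{S_C})$ defines a rational map $\M_{0,0}^{\bir}(V_5,5)\dashrightarrow\mathcal W$ whose fibre over $(S,\alpha)$ is a dense open subset of the linear system $|{-K_S+\alpha}|$. A Riemann--Roch computation on $S$, together with Kodaira vanishing (using that $-K_S+\alpha$ is nef and big, checked by intersecting with the ten lines of $S$), gives $h^0(S,-K_S+\alpha)=1+\tfrac12(-K_S+\alpha)\cdot(-2K_S+\alpha)=5$ and $h^1=h^2=0$; moreover $|{-K_S+\alpha}|$ is base-point-free with general member a smooth rational quintic in $V_5$. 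So the fibres are open subsets of $\P^4$'s, and, being linear systems carried by the universal del Pezzo surface over a dense open of $\mathcal W$, they make $\M_{0,0}^{\bir}(V_5,5)$ birational to the projectivisation of a rank-$5$ vector bundle over $\mathcal W$, hence to $\mathcal W\times\P^4$.

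It then remains to prove that $\mathcal W$ is rational; since $\dim\mathcal W=6$ this also re-derives $\dim\M_{0,0}^{\bir}(V_5,5)=6+4=10$. Here I would use the self-conjugate Desargues configuration analysed above: a root $\alpha$ is encoded by one of the ten lines of $S$ together with one of the two ``halves'' of the associated perspective configuration. Concretely, $\alpha$ singles out the triple of three pairwise-disjoint lines of $S$ disjoint from $C$ — the triple in perspective from the bisecant triangle of $C$, whose cardinality $\binom{5-2}{2}=3$ is exactly Lemma~\ref{Lemma:NumBisec} — and these three lines span $H_C$. This presents $\mathcal W$ as a connected double cover of the rational $6$-fold $\{(\ell_1,\ell_2,H): \ell_1,\ell_2\subset V_5 \text{ disjoint lines},\ H\supset\langle\ell_1,\ell_2\rangle\}$, itself a $\P^2$-bundle over an open subset of $\HH_{0,1}(V_5)^2\cong\P^2\times\P^2$. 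One then produces a birational model of this double cover — for instance by choosing $\ell_1,\ell_2$ and $H$ freely and showing that the two lines completing a ``half'' are interchanged by an explicit involution admitting a rational section, or by putting the configuration into an $\SL_2(\C)$-normal form using the quasi-homogeneity of $V_5$.

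Combining the three steps, $\M_{0,0}^{\bir}(V_5,5)$ is birational to $\mathcal W\times\P^4$ with $\mathcal W$ rational, hence rational of dimension $10$. The main obstacle is the last step: a connected double cover of a rational variety need not be rational, so one genuinely needs the explicit geometry of lines on $V_5$ (the Desargues configuration, or the $\SL_2(\C)$-action) rather than any abstract moduli argument to pin down $\mathcal W$; constructing the required birational parametrisation of the cover is where the real work lies.
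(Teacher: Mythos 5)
Your setup coincides with the paper's: reduce to the case where $S_C=H_C\cap V_5$ is a smooth quintic del Pezzo surface, write $[C]=-K_S+\alpha$ for a root $\alpha$, and exhibit $\M_{0,0}^{\bir}(V_5,5)$ as (birationally) a Zariski-locally trivial $\P^4$-fibration, the fibre being the linear system $|-K_S+\alpha|$ with $h^0=5$. The genuine gap is the last step, which you yourself flag as ``where the real work lies'': the rationality of the base $\mathcal W=\{(S,\alpha)\}$ is never established. Your proposed presentation of $\mathcal W$ as a double cover of $\{(\ell_1,\ell_2,H)\}$ is moreover not well defined as stated: a root corresponds to a \emph{triangle} of three lines of $S$ in the Desargues configuration, and there is no canonical way to discard one of the three, so the map $\mathcal W\dashrightarrow\{(\ell_1,\ell_2,H)\}$ requires an extra $3$-valued choice; and even granting a double-cover structure, you correctly note that rationality of such a cover does not follow formally. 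As written, the proof is incomplete.

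The paper closes exactly this gap by a different choice of base. Instead of fibering over the degree-$20$ cover $\mathcal W\to(\P^6)^\vee$, it sends $C$ to the \emph{unordered triple of its bisecant lines} (cardinality $\binom{3}{2}-0=3$ by Lemma~\ref{Lemma:NumBisec}), i.e.\ to a point of $\HH=\Hilb(3,\P S_2)$, which is manifestly rational of dimension $6$. Conversely, a general triple $\tau$ of points of $\P S_2$ gives three lines of $V_5$ spanning a hyperplane $H$, and on $S=V_5\cap H$ there is a \emph{unique} root $\alpha$ with $(\alpha\cdot\ell)=1$ for each $\ell\in\tau$; the fibre of $\Psi_5$ over $\tau$ is the $\P^4=|{-K_S+\alpha}|$. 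Thus $\mathcal W$ is birational to $\Hilb(3,\P S_2)\cong\Sym^3\P^2$ (not merely covered by it), the root being defined over the function field of $\HH$ since it is determined by the Galois-invariant triple $\tau$, so the fibration is Zariski-locally trivial and the total space is rational of dimension $6+4=10$. If you replace your double-cover argument by this identification of $\mathcal W$ with the rational variety $\Hilb(3,\P S_2)$, your proof goes through.
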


\begin{proof}
Let $C$ be a smooth rational quintic curve in $V_5$. We can assume it is contained a normal hyperplane section $S=S_C$ of $V_5$, in fact we may assume the smoothness of $S$, since the map $C\mapsto S$ is a $\P^4$-fibration: this can be seen computing $H^0(\tilde S,\alpha-K_{\tilde S})=5$ on the canonical model of $S$.
\\We know that the class $\O_{\tilde S}(C)$ in $\Pic(S)$ is $\alpha-K_{S}$ for some root $\alpha$ which we choose labeled as $(1,2)$. We obtain:
$$(C\cdot l_{ij}) =
\begin{cases}
0 \text{ if } i=1, j=3,4,5;\\
2 \text{ if } i = 2, j = 3,4,5;\\
1 \text{ otherwise}. 
\end{cases}
$$
the bisecants to $C$ form the triangle $M_{23}M_{24}M_{25}$ (in the Desargues configuration explained above) associated to the root $(2,1)$. Furthermore, we remark that the triangles correspond exactly to the sets of three lines on $S$ generating a hyperplane. We obtained this way a rational map associating to a rational quintic its set of bisecant lines:
$$\Psi=\Psi_5\colon \M_{0,0}^{\bir}(V_5,5) \dasharrow \HH$$
$$C\mapsto [M_{23},M_{24},M_{25}]$$
where $\mathcal{H} := \Hilb(3, \P S_2)$.
\\We study the fibres of $\Psi$: choose a general point of $\mathcal{H} := \Hilb(3, \P S_2)$: the corresponding set $\tau$ of three lines in $V_5$ generates a hyperplane $H$. Consider the smooth surface $\mathcal{S}:=V_5\cap H$. There exists exactly one root $\alpha\in \Pic(S)$ whose intersection product with the lines corresponding to $\tau$ is 1. The quintic rational curves in $V_5$ whose set of bisecants is $\tau$ are exactly the sections of the invertible sheaf $\alpha-K_{\tilde S}$ on S and they form a 4-dimensional projective space. Therefore the map $\Psi$, is (birationally) a $\P^4$-fibration.
\\We remark that the construction is still meaningful over the field of rational functions of $\HH$, i.e.~$\Psi$ is Zariski-locally trivial: in fact, the generic point $\eta$ of $\HH$ gives a divisor of a del Pezzo surface $S_\eta$ defined over $K=\C(\HH)$, with three lines defined over $K$. So also the root is defined over $K$, i.e. $\alpha \in \Pic(S_\eta)$.
\\To conclude, the variety $\HH$ is rational: it is easy to see that $\Hilb(3, \P^1\times \P^1)$ is rational since, once we fix a ruling $\P^1 \times \P^1\to \P^1$, there is a birational correspondence between the conics on $\P^1 \times \P^1$ and the set of three rules.
\end{proof}

\paragraph{\bf Sextics in $V_5$ the Segre nodal cubic} 

We conclude studying sextic curves in $V_5$.
 
\begin{remark}\label{Rem:Spin5}
First, we recall the classical and sporadic isomorphism $\Spin_5 \sim \Sp_4$. The group $G = \Sp_4$ has two sets of maximal parabolic subgroups. One of those can be identified with the quadric $\Q_3$ (the closed orbit of the standard representation of $G$ viewed as $\Spin_5$); the other one is the projective space $\P^3$ (the closed orbit of the standard representation of $G$ viewed as $\Sp_4$). 
\\The variety $\mathcal{B}$ of Borel subgroups of $G$, viewed inside $\Q_3 \times \P^3$, is the incidence correspondence (line, point), when we view $\Q_3$ as the set of isotropic lines in $\P^3$ with respect to the standard symplectic form.
More precisely, the fibre in $\mathcal{B}$ of a point $P\in \P^3$ is a line $\ell_p \subset \Q_3$ and the fibre in  $\mathcal{B}$ of a point $Q\in \Q_3$ is an isotropic line $l_Q\subset \p^3$. The  relations $Q\in \ell_P$ and $P\in l_Q$ are equivalent.
\end{remark}

We provide here a new geometric proof of the following result by Takagi and Zucconi (cf.~\cite[Theorem~5.1]{TZ_2012b}).

\begin{proposition} The moduli space $\M_{0,0}^{\bir}(V_5,6)$ is rational of dimension 12. More precisely, the rational map
$$\Psi_6\colon \M_{0,0}^{\bir}(V_5,6) \dasharrow \Hilb(6, \P S_2)$$
sending a general rational sextic curve to its set of bisecants is birational.
\end{proposition}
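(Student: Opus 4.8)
The plan is the following. By Lemma~\ref{Lemma:NumBisec} applied with $d=6$ and $g=0$, a rational sextic $C\subset V_5$ carries exactly $\binom{4}{2}-3\cdot 0=6$ bisecant lines (counted with multiplicity), and for general $C$ these form a reduced, length-$6$ subscheme of $\HH_{0,1}\cong\P S_2\cong\P^2$; that the scheme is reduced for general $C$ can be read off the incidence variety \eqref{V5Incidence} as in the proof of Lemma~\ref{Lemma:NumBisec}, using in addition that a general rational sextic is non-degenerate in $\P^6$ (the degenerate ones sweep out a family of dimension at most $11$, being contained in del Pezzo hyperplane sections). Hence $\Psi_6$ is a well-defined rational map. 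Now $\M_{0,0}^{\bir}(V_5,6)$ is irreducible of dimension $2\cdot 6=12$ by Proposition~\ref{Prop:V5RatComp}, while $\Hilb(6,\P S_2)\cong\Hilb(6,\P^2)$ is irreducible, smooth and rational of dimension $12$. Therefore it is enough to show that $\Psi_6$ is birational, from which rationality of $\M_{0,0}^{\bir}(V_5,6)$ follows; and since source and target have equal dimension, it suffices to prove that a general $6$-tuple of lines in $V_5$ (equivalently a general point of $\Hilb(6,\P^2)$) is the bisecant scheme of \emph{exactly one} rational sextic in $V_5$ — that it is the bisecant scheme of at most finitely many is already forced by the dimension count together with the bisecant count $6$.

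To construct the inverse I would fix $6$ general points $M_1,\dots,M_6\in\P S_2$, i.e.\ $6$ general lines $l_1,\dots,l_6\subset V_5$, and use the $\SL_2(\C)$-dictionary (Lemma~\ref{Lemma:V5OrbitsDesc} and the discussion around \eqref{V5Incidence}): $l_i$ and $l_j$ meet if and only if $M_i$ and $M_j$ are conjugate with respect to the fundamental conic $\Delta\subset\P S_2$. I would then project from $l_1$ via $\phi_{l_1}\colon V_5\dashrightarrow\Q_3$ of Lemma~\ref{Lemma:ProjLine}: a rational sextic $C$ with bisecant scheme $\{M_i\}$ and meeting $l_1$ in two points is carried to a rational normal quartic $C'\subset\Q_3\subset\P^4$ which meets the twisted cubic $\gamma_{l_1}$ in $4$ points and which has $\phi_{l_1}(l_2),\dots,\phi_{l_1}(l_6)$ among its bisecant lines. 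Invoking the $\Sp_4\sim\Spin_5$ picture of Remark~\ref{Rem:Spin5}, the lines of $\Q_3$ are the points of a dual $\P^3$, so these $5$ bisecants turn into $5$ points of $\P^3$ and the data of $\gamma_{l_1}$ and $C'$ into a further configuration there; the classical \emph{Segre cubic} $\mathcal{S}$ — the $10$-nodal cubic threefold carrying an $S_6$-action whose $10$ nodes and $15$ planes record the $3+3$ and $2+2+2$ partitions of $\{1,\dots,6\}$ — is the variety that organises this $6$-point datum, and $C$ is then recovered as a distinguished curve attached to $(\mathcal{S};M_1,\dots,M_6)$, for instance the transform of a hyperplane section of $\mathcal{S}$ through the prescribed nodes. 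One checks that this produces a smooth, irreducible rational sextic in $V_5$ lying in $\M_{0,0}^{\bir}(V_5,6)$ whose bisecant scheme is exactly $\{M_1,\dots,M_6\}$.

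The main obstacle will be uniqueness — proving that $\Psi_6$ has degree exactly $1$ and not merely that it is generically finite. This is precisely where the rigidity of the Segre cubic is indispensable: $\mathcal{S}$ is determined by the $6$-point configuration and carries no continuous moduli beyond its finitely many nodes, so the residual freedom in the reconstruction is $0$-dimensional and can be matched against a single curve; in practice one must analyse the linear system cutting out $C$ (equivalently $C'$) inside the relevant hyperplane section of $\mathcal{S}$, showing it has no unexpected members, and check that there is no monodromy acting non-trivially on the general fibre of $\Psi_6$. A secondary, more routine point is to confirm, over a dense open subset of $\Hilb(6,\P^2)$, that the curve produced by the construction is reduced, irreducible, smooth and birational onto its image, so that $\Psi_6$ and the construction are mutually inverse birational maps; together these give the Proposition.
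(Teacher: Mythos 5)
Your reduction chain is the same as the paper's: count the bisecants via Lemma~\ref{Lemma:NumBisec}, match dimensions ($12=12$), project from one of the six bisecant lines to $\Q_3$ via Lemma~\ref{Lemma:ProjLine}, translate through the $\Sp_4\sim\Spin_5$ duality of Remark~\ref{Rem:Spin5}, and bring in the Segre cubic. But the decisive step is missing, and you say so yourself (``the main obstacle will be uniqueness''). After the duality, the problem becomes: show there is \emph{exactly one} twisted cubic in $\P^3$ passing through the five points $P_1,\dots,P_5$ (the points of $\P^3$ corresponding to the five remaining bisecant lines, which lie on the cubic scroll $\Sigma_{\gamma_{l_0}}$ associated to $\gamma_{l_0}$) and meeting two rulings of that scroll. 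The paper settles this by mapping $\P^3$ to the $10$-nodal Segre cubic $\mathbb{S}_3$ via the quadrics through these \emph{five} points, under which the twisted cubics through the $P_i$ become one of the six two-dimensional families of \emph{lines} on $\mathbb{S}_3$, and then computing that the dual of the projected scroll $W$ is a quartic ruled surface in $(\P^4)^\vee$ with a single point lying on two rules (Baker's formula); the corresponding hyperplane cuts out the unique line $l_T$, i.e.\ the unique twisted cubic. Your proposed substitute --- recovering $C$ as ``the transform of a hyperplane section of $\mathcal{S}$ through the prescribed nodes'' organised by a six-point Segre cubic --- is not the right object (the sought curve is a line in one of the six planes' worth of lines on $\mathbb{S}_3$, coming from a five-point construction, not a hyperplane section), and no argument is given that the residual enumerative problem has answer $1$ rather than some larger finite number. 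Without that, you have at best generic finiteness of $\Psi_6$, not birationality.

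Two smaller corrections. First, the quartic image $C'=\phi_{l_1}(C)$ is \emph{bisecant} to $\gamma_{l_1}$, not $4$-secant: $D_{l_1}$ is a hyperplane section that is double along $l_1$, so the two points of $C\cap l_1$ absorb $4$ of the $6$ intersections of $C$ with $D_{l_1}$, leaving $2$ residual points mapping to $\gamma_{l_1}$. Second, your claim that finiteness of the general fibre ``is already forced by the dimension count'' is not quite right on its own --- equality of dimensions does not by itself rule out a non-dominant map with positive-dimensional fibres; it is the explicit existence-and-uniqueness construction that delivers both dominance and degree one simultaneously.
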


\begin{proof}
Lemma~\ref{Lemma:NumBisec} implies that a rational sextic in $V_5$ has six bisecant lines, so it remains to prove that for 6 lines in $V_5$ in general position $l_i$, with $i\in [0,5]$, there exists a unique rational sextic which is bisecant to them. Let us fix one of these lines $l_0$ and consider the projection from it, which we saw in Lemma~\ref{Lemma:ProjLine}:
$$\phi_{l_0}\colon V_5 \dasharrow \Q_3$$
which is birational. The rational sextics in $V_5$ which are bisecant to $l_0$ are in 1-to-1 correspondence with the rational quartics in $\Q_3$ which are bisecant to $\gamma_{l_0}$ (keeping the notation of Lemma~\ref{Lemma:ProjLine}). Moreover the strict transforms of the lines $l_1,\ldots l_5$ in $Q_3$ are lines $l_i' \subset \Q_3$ which are secant to $\gamma_{l_0}$. Our problem is then reduced to counting rational quartics in $\Q_3$ which are bisecant to the $l_i'$'s and to $\gamma_{l_0}$.

The  correspondence in Remark~\ref{Rem:Spin5} provides a birational correspondence between the variety parametrising the smooth non-degenerate rational quartics in $\Q_3$, and the variety parametrising the twisted cubics in $\P^3$. Indeed, a rational quartic curve $\delta \in \Q_3$ is the ruling of a quartic scroll $\Sigma_\delta \subset \P^3$ whose double locus is the corresponding twisted cubic $\gamma \subset \P^3$. Conversely, the datum of $\gamma$ allows to reconstruct $\delta$ as the locus of isotropic lines in $\P^3$ which are bisecant to $\gamma$.

Moreover, the marked twisted cubic $\gamma_{l_0}$ corresponds to the ruling of a cubic scroll $\Sigma_{\gamma_{l_0}}$. Since each line $\ell_i\in \Q_3$, with $i \in [1,5]$ meets $\gamma_{l_0}$, the cubic scroll $\Sigma_{\gamma_{l_0}}$ contains the points $P_i$ defined by $\ell_i:=\ell_{P_i}$. So our problem is now reduced to enumerating the twisted cubics $\gamma\in \P^3$ through the points $P_i\in \P^3$ and bisecant to two rules $\Sigma_{\gamma_{l_0}}$.

We perform now another birational transformation: let $\mathcal{P} \subset \P^3$ be the union of the five points $P_i$'s and consider the linear system of quadrics passing through $\mathcal{P}$. This defines a birational map
$$\varphi_{\mathcal{P}}\colon \P^3 \dasharrow \mathbb{S}_3\subset \P^4,$$
where $\mathbb{S}_3$ is the 10-nodal Segre cubic primal (cf.~\cite[Section~2]{D_2016}) and the indeterminacy locus of $\varphi_{\mathcal{P}}$ coincides with $\mathcal{P}$. This map can be seen as the composition 
$$\varphi_{\mathcal{P}}=\pr \circ v_2\colon \P^3 \hookrightarrow v_2(\P^3) \dasharrow \mathbb{S}_3\subset \P^4,$$
where $v_2\colon \P^3 \to \P^9$ is the second Veronese map and $\pr=\pr_{\langle v_2(\mathcal{P})\rangle}$ is the projection from the 4-dimensional linear space spanned by $v_2(\mathcal{P})$.

Recall that the system of twisted cubics through $\mathcal{P}$ is transformed into one of the six two-dimensional systems of lines in $\mathbb{S}_3$ (the other five are the transforms of the systems of lines through the $p_i$'s).

The image $v_2(\Sigma_{\gamma_{l_0}})$ is a conic bundle over $\gamma_{l_0}\simeq \P^1$. The normal model of $\Sigma_{\gamma_{l_0}}$ is $\P(\O_{\P^1}(1)\oplus \O_{\P^1}(2))$, while the scroll given by union of the planes spanned by the conics (the fibres) of $v_2(\Sigma_{\gamma_{l_0}})$ is $V:=\P(\O_{\P^1}(2)\oplus\O_{\P^1}(3)\oplus\O_{\P^1}(4))$. One sees that $\deg V=9$ and $V\cap \langle v_2(\mathcal{P})\rangle =v_2(\mathcal{P})$. Its projection $W:=\pr(V)$ is therefore of degree 4 and so is its dual $W^\vee$, a ruled surface in $(\P^4)^\vee$. By the formula in \cite[Chapter~IV, Example~2, p.~174]{B_1960}, this surface has one singular point $T$ in which two rules meet. The corresponding hyperplane $H$ of $\P^4$ contains two planes on $W$ meeting along a line $l_T$ and containing two conics on the transformed surface $\Sigma_{\gamma_{l_0}}$. The line $l_T$ is the transform of the requested twisted cubic $\gamma$.\end{proof}


\bibliographystyle{alpha}
\bibliography{biblio}

\end{document}